\documentclass[letterpaper, 11pt]{amsart}
\usepackage[utf8]{inputenc}
\usepackage[T1]{fontenc}
\usepackage{mathtools}
\usepackage{amsmath}
\usepackage{amssymb}
\usepackage{yhmath}
\usepackage{graphicx}
\usepackage{mathrsfs}
\usepackage{bbm}
\usepackage{xcolor}
\usepackage{tikz-cd}
\usepackage{tikz}
\usetikzlibrary{patterns}
\usepackage{hyperref}

\usepackage{verbatim}
\usepackage{float}

\setcounter{tocdepth}{1}
\DeclareMathAlphabet{\mathpzc}{OT1}{pzc}{m}{it}

\usepackage{thmtools}
\usepackage{thm-restate}

\usepackage{caption}

\newtheorem{theorem}{Theorem}[section]

\newtheorem*{claim*}{Claim}

\newtheorem{lem}[theorem]{Lemma}

\newtheorem{cor}[theorem]{Corollary}

\newtheorem{prop}[theorem]{Proposition}

\theoremstyle{definition}

\newtheorem{Def}[theorem]{Definition}

\theoremstyle{remark}
\newtheorem{remark}[theorem]{Remark}

\newtheorem{Rmk}[theorem]{Remark}

\numberwithin{equation}{section}


\newcommand{\op}{\operatorname}

\newcommand{\be}{\begin{equation}}
\newcommand{\ee}{\end{equation}}
\newcommand{\Ga}{\Gamma}

\newcommand{\N}{\mathbb N}
\newcommand{\ga}{\gamma}

\newcommand{\La}{\Lambda}

\newcommand{\ba}{\backslash}

\newcommand{\cal}{\mathcal}
\newcommand{\br}{\mathbb R}
\newcommand{\SO}{\op{SO}}
\newcommand{\Sp}{\op{Sp}}

\newcommand{\bH}{\mathbb H}

\newcommand{\G}{\Gamma}

\renewcommand{\frak}{\mathfrak}

\newcommand{\e}{\varepsilon}

\newcommand{\fa}{\mathfrak a}

\newcommand{\id}{\op{id}}

\newcommand{\fg}{\frak g}

\newcommand{\Lie}{\op{Lie}}

\newcommand{\supp}{\op{supp}}

\renewcommand{\epsilon}{\e}

\newcommand{\SL}{\op{SL}}

\newcommand{\Hom}{\op{Hom}}
\newcommand{\bs}{\ba}

\title[Non-tempered subgroups]{Zariski-dense non-tempered subgroups in higher rank of nearly optimal growth}

\author{Miko{\l}aj Fr{^^c4^^85}czyk}
\address{Faculty of Mathematics and Computer Science, Jagiellonian University, ul. Łojasiewicza 6, 30-348 Krak{\'o}w, Poland}
\email{mikolaj.fraczyk@uj.edu.pl}

\author{Hee Oh}
\address{Department of Mathematics, Yale University, New Haven, CT}
\email{hee.oh@yale.edu}

\begin{document}

\begin{abstract}
We construct the first example of a Zariski-dense, discrete, non-lattice subgroup $\Ga_0$
of a higher rank simple
Lie group $G$, which is non-tempered in the sense that the quasi-regular representation
$L^2(\Ga_0\ba G)$ is non-tempered.

More precisely, let $n\ge 3$ and let $\Ga$ be the fundamental group of a closed hyperbolic $n$-manifold that contains a properly embedded totally geodesic hyperplane. We show that there exists a non-empty open subset $\cal O$ of $\op{Hom}(\Gamma, \SO(n,2))$ such that for any $\sigma\in \cal O$, the subgroup $\sigma(\Ga)$ is a  Zariski-dense and non-tempered Anosov subgroup of $\SO(n,2)$. In addition, the growth indicator of $\sigma(\Ga)$ is nearly optimal: 
it almost realizes the supremum of growth indicators among all non-lattice discrete subgroups, a bound imposed by property $(T)$ of $\SO(n,2)$.
\end{abstract}

\maketitle
\tableofcontents

\section{Introduction}
Let $G$ be a connected semisimple real algebraic group. 
Let $\Ga<G$ be a discrete  subgroup of $G$.  Denote by $dx$ a $G$-invariant measure on the homogeneous space $\Ga\ba G$.
Consider the Hilbert space $L^2(\Ga\ba G)=L^2(\Ga\ba G, dx)$.
The right translation action of $G$ on $\Ga\ba G$
induces a unitary representation of $G$ on $L^2(\Ga\ba G)$, called the quasi-regular representation.

A unitary representation  $(\pi,\cal H)$ of $G$ is called {\it tempered} if it is weakly contained in the (right) regular representation $L^2(G)$, 
i.e., any diagonal matrix coefficients of $(\pi,\cal H)$ can be approximated by a convex linear combination of diagonal matrix coefficients of $L^2(G)$, uniformly on compact subsets of $G$.
This notion, due to  Harish-Chandra, plays a central role in harmonic analysis on semisimple groups.

\begin{Def}\label{t1}
    We call a discrete subgroup $\Ga$ {\it tempered} in $G$ if  
    its quasi-regular representation $L^2(\Ga\ba G)$ is  tempered. 
\end{Def}

 Temperedness of $\Ga$ is equivalent to the statement that   all matrix coefficients of $L^2(\Ga\ba G)$ are $L^{2+\e}$-integrable  for any $\e>0$ \cite{CHH}. If $G$ has Kazhdan's property $(T)$, that is, all simple factors of $G$ have rank at least $2$ or are isogenous to $\Sp(n,1)$ or $F_4^{(-20)}$, then  a quantitative form of property $(T)$ implies the existence
 of $p=p_G>0$ such that for any non-lattice discrete subgroup $\Ga<G$,  all matrix coefficients of  $L^2(\Ga\ba G)$ are $L^p$-integrable (\cite{Co}, \cite{Oh}, \cite{kostant}). 

In rank-one groups, the situation is quite different, for example,
any lattice admits a non-elementary infinite index normal subgroup \cite{delzant}, whereas the Margulis normal subgroup theorem precludes such behavior in higher rank.
Moreover, there are also convex cocompact subgroups of $\SO(n,1)$, $n\ge 2$, whose critical exponents can be made arbitrarily close to the volume entropy of the hyperbolic $n$-space $\bH^n$, that is, $n-1$  \cite[Sec. 6]{Magee} (such
 examples cannot occur in higher rank because of
\eqref{ttt}). 
These high-exponent groups furnish Zariski-dense, non-tempered subgroups by \cite[Thm 1.4]{matsuzaki2020normalizer} and \cite[Thm 4.2]{corlette}.

For higher-rank groups, previously known non-tempered examples were all lattices of a proper algebraic subgroup of $G$
(\cite[Example B]{BLS}, \cite{BK}).
It remained open 
 whether one could find a {\it Zariski-dense},  non-lattice, non-tempered subgroup of a higher rank simple group $G$.
Our main result answers this in the affirmative:
\begin{theorem}\label{m1} For each $n\ge 3$, there exists a Zariski-dense,
non-lattice, non-tempered subgroup of $\SO(n,2)$.
\end{theorem}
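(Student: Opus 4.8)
\emph{Approach and setup.} The plan is to produce the required subgroups as small deformations, obtained by bending along a totally geodesic hypersurface, of the simplest non-tempered example: a cocompact lattice inside the rank-one subgroup $\SO(n,1)<\SO(n,2)$. Fix an orthogonal decomposition $\R^{n+2}=\R^{n,1}\perp\R^{0,1}$ and let $j:\SO(n,1)\hookrightarrow G:=\SO(n,2)$ be the resulting embedding. By classical arithmetic constructions there is a closed hyperbolic $n$-manifold $M=\Ga\ba\bH^n$ containing a connected properly embedded totally geodesic hypersurface $S$; after passing to a finite cover I may assume $S$ is separating, so $\Ga=\Ga_1\ast_\Delta\Ga_2$ with $\Ga_i=\pi_1(N_i)$ the two sides and $\Delta=\pi_1(S)$ (the HNN case is identical). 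Let $\iota:\Ga\hookrightarrow\SO(n,1)$ be the holonomy and $\sigma_0:=j\circ\iota$. Since $\iota(\Ga)$ is a cocompact lattice it is Anosov for the stabilizer of an isotropic line of $\R^{n,1}$, and as $j$ carries such a line to an isotropic line of $\R^{n+2}$ while preserving the relevant proximality, $\sigma_0$ is $Q$-Anosov in $G$, where $Q<G$ is the stabilizer of an isotropic line. In coordinates $\fa=\R e_1\oplus\R e_2$ on a maximal split Cartan subspace (restricted system $B_2$, Weyl chamber $\{v_1\ge v_2\ge 0\}$, and $2\rho=ne_1+(n-2)e_2$, where $2\rho$ is the sum of the positive restricted roots counted with multiplicity) one has $\mu(\sigma_0(\ga))=d_{\bH^n}(o,\ga o)\,e_1$, so the limit cone of $\sigma_0(\Ga)$ is the ray $\R_{\ge0}e_1$ and its growth indicator is $\psi_{\sigma_0(\Ga)}(te_1)=(n-1)t$. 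Since $\rho(e_1)=n/2<n-1$ for $n\ge3$, the inequality $\psi_{\sigma_0(\Ga)}\le\rho$ fails on the limit cone, so by the growth-indicator criterion for temperedness of $L^2(\Ga'\ba G)$ the group $\sigma_0(\Ga)$ is already non-tempered --- but it is not Zariski dense, lying in $j(\SO(n,1))$.

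\emph{Deforming: Anosov and Zariski dense.} Because $S$ is totally geodesic, a conjugate of $\sigma_0(\Delta)$ lies in a copy of $\SO(n-1,1)\subset j(\SO(n,1))$ whose centralizer in $G$ contains a one-parameter split subgroup $\{a_s\}$, acting on the hyperbolic plane $\R^{1,1}$ orthogonal to the corresponding $\R^{n-1,1}$. Bending along $\Delta$ --- keeping $\sigma_0$ on $\Ga_1$ and replacing it by $a_s\,\sigma_0(\cdot)\,a_s^{-1}$ on $\Ga_2$ (well defined since $a_s$ centralizes $\sigma_0(\Delta)$) --- produces a path $\sigma_s$ through $\sigma_0$; openness of the Anosov property makes $\sigma_s$ $Q$-Anosov, hence discrete and faithful, for all small $s$. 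Each $\Ga_i$ is Zariski dense in $\SO(n,1)$ (being convex cocompact and preserving no proper totally geodesic subspace of $\bH^n$, as it would otherwise be a finite extension of the infinite-index subgroup $\Delta$), so the Zariski closure of $\sigma_s(\Ga)$ contains $j(\SO(n,1)^\circ)$ and $a_s\,j(\SO(n,1)^\circ)\,a_s^{-1}$. A Johnson--Millson-type computation shows these two copies of $\mathfrak{so}(n,1)$ generate $\mathfrak{so}(n,2)$ when $s\ne0$: both contain $\mathfrak{so}(n-1,1)$; in the $\SO(n-1,1)\times\SO(1,1)$-decomposition $\mathfrak{so}(n,2)=\mathfrak{so}(n-1,1)\oplus\mathfrak{so}(1,1)\oplus(\R^{n-1,1}\otimes\R^{1,1})$, the first copy meets the off-diagonal block in $\R^{n-1,1}\otimes L_0$ for a line $L_0\subset\R^{1,1}$ and the second in $\R^{n-1,1}\otimes a_sL_0$; since $L_0$ and $a_sL_0$ span $\R^{1,1}$, the whole block is recovered, whose self-brackets then yield the remaining $\mathfrak{so}(1,1)$. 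Thus $\sigma_s(\Ga)$ is Zariski dense in $\SO(n,2)$ for all small $s\ne0$.

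\emph{Non-temperedness persists.} Temperedness of $L^2(\sigma(\Ga)\ba G)$ is equivalent to $\delta_\rho(\sigma(\Ga))\le1$, where $\delta_\rho(\sigma(\Ga))=\sup_v\psi_{\sigma(\Ga)}(v)/\rho(v)$ (supremum over the limit cone) is the $\rho$-critical exponent. Since $\rho$ has strictly positive coordinates it is coercive on the whole chamber $\fa^+$, so $\delta_\rho$ is finite for every discrete subgroup and varies continuously along Anosov deformations. At $\sigma_0$ its value is $\psi_{\sigma_0(\Ga)}(e_1)/\rho(e_1)=(n-1)/(n/2)=2(n-1)/n>1$ for every $n\ge3$; hence $\delta_\rho(\sigma_s(\Ga))>1$ for all small $s$, so $\sigma_s(\Ga)$ stays non-tempered. (For $n\ge4$ one can avoid the continuity input entirely: $\sigma_s|_{\Ga_1}=\sigma_0|_{\Ga_1}$, so $e_1$ remains in the limit cone and $\psi_{\sigma_s(\Ga)}(e_1)\ge\delta_{\Ga_1}>\delta_\Delta=n-2\ge n/2$; for $n=3$ one instead first chooses $M$ with $\delta_{\Ga_1}>3/2$, possible by taking a cover in which $N_1$ contains large metric balls.) Finally, for small $s\ne0$ the subgroup $\sigma_s(\Ga)<\SO(n,2)$ is discrete, Zariski dense and non-tempered, and it is not a lattice: a $Q$-Anosov subgroup of a higher-rank simple group has limit cone strictly inside the Weyl chamber --- the uniform Anosov gap keeps it off the wall $\{v_1=v_2\}$ --- whereas a lattice has limit cone the whole chamber. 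As each of the three properties holds on a neighborhood of such $\sigma_s$ in $\op{Hom}(\Ga,\SO(n,2))$, one also obtains the open set $\mathcal O$ asserted in the introduction.

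\emph{Main obstacle.} The crux is the persistence of non-temperedness, because $\sigma_0$ sits at a degenerate point of the Anosov locus --- its limit cone is a single ray --- so one must control how the growth indicator of $\sigma_s(\Ga)$ spreads out; the clean mechanism is continuity of the $\rho$-critical exponent on the Anosov locus, which is where the substantive harmonic-analytic/ergodic input lies. The remaining points are comparatively standard: checking that $\sigma_0$ is Anosov in the rank-two group $G$ and not merely in $\SO(n,1)$ (so that the deformation-stable family $\sigma_s$ exists), running the Johnson--Millson Lie-algebra computation with the correct centralizer and module decomposition, and invoking the existence of closed hyperbolic manifolds with properly embedded totally geodesic hypersurfaces.
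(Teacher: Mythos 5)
Your overall skeleton is the paper's second route (bend the lattice $\Ga<\SO^\circ(n,1)<\SO^\circ(n,2)$, use openness of the $\{\alpha_1\}$-Anosov property and Zariski density, and detect non-temperedness through the $\rho$-critical exponent via $\psi_\Ga\le\rho \Leftrightarrow$ tempered), but the step you yourself identify as the crux is asserted rather than proved, and as stated it does not follow from anything you cite. You claim that $\delta_{\rho,\sigma(\Ga)}$ ``varies continuously along Anosov deformations'' because $\rho$ is coercive on $\fa^+$; coercivity only gives finiteness. The thermodynamic-formalism continuity results (Bridgeman--Canary--Labourie--Sambarino, Sambarino) give analytic variation of $\delta_{\psi,\sigma(\Ga)}$ only for functionals $\psi\in\fa_{\alpha_1}^*$, i.e.\ functionals invariant under the projection $p_{\alpha_1}$ onto $\fa_{\alpha_1}=\ker\alpha_2$, because the $\{\alpha_1\}$-Anosov property only controls $p_{\alpha_1}\circ\mu$. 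Here $\rho$ is \emph{not} $p_{\alpha_1}$-invariant, and your representations are only $\{\alpha_1\}$-Anosov (they cannot be Borel--Anosov at $\sigma_0$, where $\alpha_2\circ\mu\equiv 0$ on $\Ga$), so there is no off-the-shelf continuity statement for $\delta_\rho$ to invoke. The paper closes exactly this gap with a two-step argument you omit: (i) Kassel's continuity of limit cones under small deformations, which confines $\cal L_{\sigma(\Ga)}$ to a thin cone $\cal C_\eta$ around $\fa_{\alpha_1}$; (ii) on $\cal C_\eta$ one has $(1-\e)\rho\le\rho'\le(1+\e)\rho$ with $\rho'=\rho\circ p_{\alpha_1}\in\fa_{\alpha_1}^*$, whence $\delta_{\rho,\sigma(\Ga)}$ is squeezed between $(1\mp\e)\delta_{\rho',\sigma(\Ga)}$, and the continuity input is applied to $\delta_{\rho'}$, which is legitimately covered by the cited results. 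Without (i) and the comparison (ii), your main line of argument is incomplete for every $n\ge 3$.

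Your parenthetical fallback does patch this for large $n$: since $\sigma_s|_{\Ga_1}=\op{id}$, the growth indicator satisfies $\psi_{\sigma_s(\Ga)}(e_1)\ge\delta_{\Ga_1}\ge\delta_\Delta=n-2>\tfrac n2=\rho(e_1)$, which is a complete and pleasantly soft argument for $n\ge 5$. But for $n=4$ it needs the strict inequality $\delta_{\Ga_1}>n-2$, and for $n=3$ a construction with $\delta_{\Ga_1}>3/2$; both are plausible but are precisely the cases left unproved (strictness does not follow from infinite index alone, and your ``large metric balls'' cover is only a gesture). Note also that the paper's primary proof of this theorem is entirely different and avoids all of this: it shows temperedness is a \emph{closed} condition, both under Chabauty limits (via convergence of normalized counting measures and of matrix coefficients of $L^2(\Ga_n\bs G)$, Theorem \ref{cl}) and hence under algebraic limits in $\Hom(\Ga,G)$ (Theorem \ref{al2}); combined with Corollary \ref{ntt} and the discreteness/Zariski-density of small bendings, this gives non-temperedness of \emph{every} sufficiently small discrete deformation, with no Anosov or thermodynamic input. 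If you want to keep your route, import Proposition \ref{ka} and the $\rho$ versus $\rho'$ comparison; otherwise the Chabauty/Fell closedness argument is the cleaner way to finish.
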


\begin{remark}
For a geometrically finite discrete subgroup  $\Ga<\SO(n,1)$,
the hyperbolic manifold $\Ga\ba \bH^n$ possesses a square-integrable base eigenfunction of the Laplacian if and only if $\Ga$ is non-tempered (\cite{Pa}, \cite{Su}, \cite{Su2}).
By contrast, a recent result of \cite{EFLO} shows that for any non-lattice discrete subgroup $\Ga$ of a higher-rank simple algebraic group $G$, the base eigenfunction on the corresponding locally symmetric manifold is never square-integrable. Hence the appearance of a non-tempered subgroup in Theorem \ref{m1} underscores another sharp distinction in the behavior of infinite-volume locally symmetric manifolds between the higher rank and rank-one cases.
\end{remark}

Temperedness of $\Ga$ can be characterized in terms of its growth indicator $\psi_\Ga$.
Fix a Cartan decomposition $G=K\exp (\fa^+) K$, where
$K$ is a maximal compact subgroup and $\fa^+$ is a positive Weyl chamber of a Cartan subalgebra $\fa$. For $g\in G$, there exists a unique element $\mu(g)\in \fa^+$ such that $g\in K \exp \mu(g) K$, called the Cartan projection of $g$.

For a discrete subgroup $\Ga$ of $G$, denote by
$\mathcal L_\Ga\subset \fa^+$ its limit cone, which is defined as the asymptotic cone of $\mu(\Ga)$. The growth indicator $\psi_{G, \Ga}=\psi_\Ga :\fa^+\to \br \cup\{-\infty\}$, introduced by Quint \cite{Q1}, is a higher rank version of the critical exponent. It is $-\infty$ outside the limit cone $\cal L_\Ga$.
For each $v\in \cal L_\Ga$, the value $\psi_\Ga(v)$ encodes the exponential growth rate of $\Ga$ in the direction $v$:
\be\label{gi} \psi_\Ga(v)= \|v\|\cdot \inf_{v\in \cal C}\limsup_{T\to \infty}
\frac{\log\#\{\ga\in \Ga: \mu(\ga)\in \cal C, \|\mu(\ga)\|\le T\}}{T}  \ee 
where the infimum is taken over all open cones $\cal C\subset \fa^+$ containing $v$. This definition is independent of the choice of a norm $\|\cdot\|$ on $\fa$.

Denote by $\rho=\rho_G$ the half-sum of all positive roots of $(\op{Lie} G, \fa)$ counted with multiplicity.
The linear form $2\rho\in \fa^*$ gives the exponential volume growth rate of $G$: for any $v\in \fa^+$,
$$2\rho (v)= \|v\|\cdot \inf_{v\in \cal C}\limsup_{T\to \infty}
\frac{\log\op{Vol} \{g\in G: \mu(g)\in \cal C, \|\mu(\ga)\|\le T\}}{T}  $$
where the infimum is taken over all open cones $\cal C\subset \fa^+$ containing $v$. We have 
$$\psi_\Ga\le 2\rho\quad \text{on $\fa^+ $  }$$ for any discrete subgroup $\Ga<G$ and equality holds
for lattices $\Ga$ \cite{Q2}. If $G$ has Kazhdan's property $(T)$, there exists a constant $\eta_G>0$ such that
for any non-lattice discrete subgroup $\Ga$ of $G$, we have
\be\label{ttt} \psi_\Ga \le (2-\eta_G)\rho \quad \text{on $\fa^+ $  }\ee (\cite[Theorem 4.4]{Co}, \cite[Theorem 5.1]{Q3}, see also \cite[Theorem 7.1]{LO}).

\begin{Def}
  A discrete subgroup $\Ga<G$ has slow growth if $$\text{ $\psi_\Ga\le \rho\quad $ on $\fa^+$.}$$
\end{Def}
The slow growth means, informally, that the number of elements of $\Gamma$ in a ball of radius $R$ in $G$ is bounded (up to sub-exponential factors)
by a constant times the square root of the ball's volume
 as $R\to \infty$. 
It turns out that the slow growth property of $\Ga$ determines the temperedness: 
$$\text{$\psi_\Ga\le \rho $ on $\fa^+\quad $ if and only if $\quad \Ga$ is tempered.}$$
This was shown  in \cite{EO} for Borel-Anosov subgroups, and in \cite{LWW} for general discrete subgroups.

Theorem \ref{mm}, which is a more elaborate version of Theorem \ref{m1}, provides the first  Zariski-dense, non-lattice subgroups of
   higher rank simple Lie groups  that do not have
 slow growth. Moreover, these examples have nearly optimal growth.
For $n\ge 3$, the identity component of the
special orthogonal group $\SO^\circ(n,2)$ is a simple Lie group of rank two.
As discussed in  section \ref{sec4}, we can identify its positive Weyl chamber $\fa^+$  with $$\fa^+= \{v=(v_1, v_2, 0, \cdots,0, -v_2, -v_1)\in \br^{n+2}:v_1\ge v_2\ge 0\}.$$
The set of simple roots of $\SO^\circ(n,2)$ is given by $\alpha_1(v)=v_1-v_2$ and $\alpha_2(v)=v_2$, and $\rho$ is  the following: $$\rho (v) = 
 \frac{1}{2} \left( nv_1+ (n-2)v_2\right)$$
 for any $v=(v_1, v_2, 0, \cdots,0, -v_2, -v_1)\in \fa^+$.

\begin{theorem}\label{mm}
Let $n\ge 3$ and let $\Ga$ be the fundamental group of a closed hyperbolic $n$-manifold with a properly embedded totally geodesic hyperplane. For any $\e>0$, there exists a non-empty open subset $\cal O=\cal O(\e)$ of $\op{Hom}(\Ga, \SO^\circ(n,2))$ such that for any $\sigma\in \cal O$,
the following hold:
\begin{enumerate}
    \item $\sigma(\Ga)$ is a Zariski-dense, $\{\alpha_1\}$-Anosov\footnote{see Def. \ref{anosov} for the notion of an Anosov subgroup},  and non-tempered subgroup of $\SO^\circ(n,2)$ without slow growth;
\item for all $v\in \fa^+$, we have
$$\psi_{\sigma(\Ga)}(v)\le \left( \frac{2(n-1)}{n} +\e \right) \rho (v) ;$$
\item there exists a unit vector $v_\sigma  \in \fa^+$ such that
    \be\label{vs3} \psi_{\sigma(\Ga)}(v_\sigma)\ge \left( \frac{2(n-1)}{n} -\e \right)  \rho (v_\sigma) . \ee 
\end{enumerate}
 Moreover, $\sigma(\Ga)$ has nearly optimal growth in the sense that
 \be\label{vs0} \psi_{\sigma(\Gamma)}(v_\sigma)\ge  \sup_{\La} \psi_\Lambda(v_\sigma)-\varepsilon\ee 
  where the supremum is taken over \emph{all} non-lattice discrete subgroups $\La < \SO^\circ(n,2).$ 

\end{theorem}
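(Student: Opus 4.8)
The plan is to produce $\cal O$ by \emph{bending} the composition $\iota\colon\Gamma\hookrightarrow\SO^\circ(n,1)\hookrightarrow\SO^\circ(n,2)$ along the totally geodesic hypersurface, in the style of Johnson--Millson. Let $M$ be the closed hyperbolic $n$-manifold, $\Gamma=\pi_1(M)$, and $S\subset M$ the properly embedded totally geodesic hypersurface, with $\Delta=\pi_1(S)$ a cocompact lattice in a subgroup $\SO^\circ(n-1,1)<\SO^\circ(n,1)$; then $\Gamma$ is an amalgam $A\ast_\Delta B$ or an HNN extension with associated subgroup $\Delta$, according as $S$ separates $M$ or not. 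Embed $\SO^\circ(n,1)$ in $\SO^\circ(n,2)$ as the stabilizer of a negative line, so that on $\br^{n+2}=\br^{n-1,1}\oplus\br^{1,1}$ the subgroup $\SO^\circ(n-1,1)\supset\Delta$ acts trivially on the second summand; its centralizer in $\SO^\circ(n,2)$ then contains a one-parameter group $\{c_t\}_{t\in\br}\cong\SO^\circ(1,1)$ acting on $\br^{1,1}$, with $c_t\notin\SO^\circ(n,1)$ for $t\neq0$. Since $c_t$ centralizes $\Delta$, the recipe $\sigma_t|_A=\iota|_A$, $\sigma_t|_B=c_t\,\iota(\cdot)\,c_t^{-1}$ (in the HNN case, $\sigma_t$ equals $\iota$ on the vertex group and multiplies the image of the stable letter by $c_t$) defines $\sigma_t\in\Hom(\Gamma,\SO^\circ(n,2))$ with $\sigma_0=\iota$; one takes $\cal O=\cal O(\e)$ to be a sufficiently small open neighborhood in $\Hom(\Gamma,\SO^\circ(n,2))$ of $\sigma_{t}$ for a fixed small $t=t(\e)\neq0$.

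First, the two structural properties. As a cocompact lattice in $\SO^\circ(n,1)$, $\Gamma$ is convex cocompact there, so $\iota(\Gamma)$ is $\{\alpha_1\}$-Anosov in $\SO^\circ(n,2)$: its limit map is $\partial\bH^n\hookrightarrow\{\text{isotropic lines of }\br^{n,2}\}$, and $\alpha_1(\mu(\iota\ga))=d_{\bH^n}(o,\ga o)+O(1)$ grows linearly in the word length, giving the domination; since $\{\alpha_1\}$-Anosov is an open condition, every $\sigma\in\cal O$ is $\{\alpha_1\}$-Anosov. Zariski density of $\sigma_t(\Gamma)$ for $t\neq0$ follows from a standard argument: its Zariski closure contains $\iota(\Delta)$ (Zariski-dense in $\SO^\circ(n-1,1)$, being a lattice), hence, together with elements of $\iota(\Gamma)$ not stabilizing the corresponding totally geodesic hyperplane, it contains $\overline{\iota(\Gamma)}^Z=\SO^\circ(n,1)$, as well as a $c_t$-conjugate of $\SO^\circ(n,1)$; since $c_t$ does not normalize $\SO^\circ(n,1)$ for $t\neq0$, a Lie-algebra computation shows these generate $\SO^\circ(n,2)$, and Zariski density persists for nearby $\sigma\in\cal O$ by rigidity of Anosov representations into proper reductive subgroups.

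The heart of the argument is the growth indicator. Write $\mu(g)=(\mu_1(g),\mu_2(g),0,\dots,0,-\mu_2(g),-\mu_1(g))$ and $v_0=(1,0,\dots,0,-1)\in\fa^+$; note $\rho(v_0)=\tfrac n2$ and $n-1>\tfrac n2$ for $n\ge3$. At $t=0$ the limit cone of $\iota(\Gamma)$ is exactly the ray $\br_{\ge0}v_0$ (a loxodromic $\iota(\ga)$ has Jordan projection $\ell_{\bH^n}(\ga)\,v_0$) and $\psi_{\iota(\Gamma)}(v_0)=n-1$, the volume entropy of $\bH^n$, so $\psi_{\iota(\Gamma)}(v_0)=\tfrac{2(n-1)}{n}\rho(v_0)$; thus $\iota(\Gamma)$ already fails slow growth in $\SO^\circ(n,2)$, though it is not Zariski-dense. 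For small $t\neq0$ one proves the two estimates, uniformly over all $\ga\in\Gamma$: \emph{(a)} $\mu_2(\sigma_t(\ga))\le\ka(t)\,d_{\bH^n}(o,\ga o)+O(1)$ with $\ka(t)\to0$ as $t\to0$, and \emph{(b)} $\mu_1(\sigma_t(\ga))=(1+o_t(1))\,d_{\bH^n}(o,\ga o)+O(1)$. These are proved by cutting the geodesic $[o,\ga o]$ at its intersections with lifts of $S$ (of which there are at most $O(d_{\bH^n}(o,\ga o))$, since such lifts are uniformly separated and a geodesic meets each at most once): between consecutive crossings $\sigma_t(\ga)$ is a product of factors in the rank-one group $\SO^\circ(n,1)$, whose second singular value is $1$, while each crossing inserts a single $c_t^{\pm1}$ perturbing the singular values by $O(t)$; the accumulation is controlled via the continuity, along the Anosov family $\{\sigma_t\}$, of the limit cone and of the critical exponent, anchored at the degenerate datum $\sigma_0=\iota$. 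Granting (a)--(b): $\mathcal L_{\sigma_t(\Gamma)}$ lies in a cone of angular radius $O(\ka(t))$ about $\br_{\ge0}v_0$, on which counting $\{\ga:\|\mu(\sigma_t\ga)\|\le T\}$ matches $\bH^n$-orbit counting for $\Gamma$ up to a factor $1+o_t(1)$ in the exponent, while $\rho(v)=\rho(v_0)+o_t(1)$ for unit $v$ in the cone; this yields $\psi_{\sigma_t(\Gamma)}(v)\le(\tfrac{2(n-1)}{n}+\e)\rho(v)$ for all $v$ (trivially where $\psi=-\infty$), hence (1)--(2). Letting $v_\sigma$ be a unit vector with $\psi_{\sigma_t(\Gamma)}(v_\sigma)=\sup_{\|v\|=1}\psi_{\sigma_t(\Gamma)}(v)$, which lies within $O(\ka(t))$ of $v_0/\|v_0\|$ and has $\psi_{\sigma_t(\Gamma)}(v_\sigma)=n-1+o_t(1)$ by continuity of the critical exponent, gives (3); and $\psi_{\sigma_t(\Gamma)}(v_\sigma)>\rho(v_\sigma)$ for $n\ge3$ shows failure of slow growth, hence non-temperedness via \cite{EO} and \cite{LWW}.

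Finally, \eqref{vs0} is a formal consequence of (3) together with the sharp form of \eqref{ttt} for $\SO^\circ(n,2)$ — namely $\psi_\La\le\tfrac{2(n-1)}{n}\rho$ for every non-lattice discrete $\La<\SO^\circ(n,2)$, whose sharpness is certified precisely by the present examples: then $\psi_{\sigma(\Gamma)}(v_\sigma)\ge(\tfrac{2(n-1)}{n}-\e)\rho(v_\sigma)\ge\sup_\La\psi_\La(v_\sigma)-\e\,\rho(v_\sigma)$, and since $\rho(v_\sigma)$ is bounded in terms of $n$, the loss is absorbed by rescaling $\e$. The main obstacle is estimate (a) with its \emph{uniform} rate $\ka(t)\to0$: one must show the second Cartan coordinate of $\sigma_t(\ga)$ is a vanishingly small fraction of $d_{\bH^n}(o,\ga o)$ simultaneously over \emph{all} $\ga$, including words crossing $S$ arbitrarily often — equivalently that the limit cone of the bent group collapses onto the single ray $\br_{\ge0}v_0$ as $t\to0$. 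Crude subadditivity of $\mu_1+\mu_2$ overestimates $\mu_2$ badly when the geodesic $[o,\ga o]$ zigzags, so a finer analysis of how the inserted boosts $c_t^{\pm1}$ interact with the rank-one dynamics of the intervening $\SO^\circ(n,1)$-factors is required; this is exactly the phenomenon that makes $\sigma_t(\Gamma)$ only barely non-tempered, of nearly optimal growth.
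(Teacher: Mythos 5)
Your overall architecture (bending along the hypersurface, openness of the Zariski-dense and $\{\alpha_1\}$-Anosov conditions, collapse of the limit cone as $t\to 0$, and non-temperedness via the failure of slow growth) is the same as the paper's second proof, but the step that carries all the analytic content is missing. Your estimates (a)--(b) -- a uniform bound $\mu_2(\sigma_t(\ga))\le \ka(t)\,d_{\bH^n}(o,\ga o)+O(1)$ with $\ka(t)\to 0$ and $\mu_1(\sigma_t(\ga))=(1+o_t(1))d_{\bH^n}(o,\ga o)+O(1)$ -- are exactly the hard point, and you concede at the end that the crossing-by-crossing singular value bookkeeping does not deliver them ("a finer analysis \dots is required"). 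Saying the accumulation is "controlled via the continuity, along the Anosov family, of the limit cone and of the critical exponent" is circular as written: those continuity statements are the theorems one must invoke (or prove), not consequences of your cutting argument. The paper fills this gap by quoting two specific results: Kassel's continuity of limit cones under small deformations of a convex cocompact (here Anosov) subgroup (Proposition \ref{ka}), and the analytic variation of the critical exponent $\delta_{\psi,\sigma(\Ga)}$ for $\{\alpha_1\}$-Anosov representations via thermodynamic formalism (Theorem \ref{bcls}, from \cite{BCLS} and \cite{Sa}). Moreover there is a subtlety you skip: the latter continuity applies only to linear forms $\psi\in\fa_{\alpha_1}^*$, and $\rho$ is not such a form; the paper first compares $\delta_{\rho,\sigma(\Ga)}$ with $\delta_{\rho',\sigma(\Ga)}$ for $\rho'=\rho\circ p_{\alpha_1}$ using the limit-cone pinching, and only then applies continuity to $\rho'$ (Proposition \ref{alter}). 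Finally, instead of your direct cone-counting, the paper converts critical-exponent information into statements (2)--(3) about $\psi_{\sigma(\Ga)}$ via the tent property $\psi_{\Ga_0}\le\delta_{\rho,\Ga_0}\,\rho$ with equality in some direction \cite{KMO}; your route would need an independent justification that orbit counting for $\sigma_t(\Ga)$ matches $\bH^n$-counting "up to $1+o_t(1)$ in the exponent", which again is the unproven continuity of a critical exponent.

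There is also an error in your deduction of the near-optimality \eqref{vs0}. You assert that every non-lattice discrete $\La<\SO^\circ(n,2)$ satisfies $\psi_\La\le\frac{2(n-1)}{n}\rho$ on all of $\fa^+$, "certified by the present examples". No such bound is available (and invoking the examples being constructed to certify it is circular). The bound that actually follows from effective property $(T)$ is $\psi_\La(v)\le (n-1)v_1+(n-2)v_2$ (Proposition \ref{lo}, via $2\rho-\Theta$ with $\Theta(v)=v_1$), and this exceeds $\frac{2(n-1)}{n}\rho(v)$ whenever $v_2>0$. The correct argument, as in the paper, uses that the direction $v_\sigma$ realizing \eqref{vs3} lies within $O(c_\sigma)$ of the wall $\ker\alpha_2$ (by the limit-cone collapse), where the two bounds agree up to an error controlled by $c_\sigma$; only then does \eqref{vs3} plus Proposition \ref{lo} give \eqref{vs0} after shrinking $\e$. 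Your structural claims (Anosov via openness, Zariski density, non-temperedness from $\psi\not\le\rho$ via \cite{EO}, \cite{LWW}) are in line with the paper, though Zariski density of nearby representations should be justified by the openness of the set of Zariski-dense representations \cite{AB} rather than by an appeal to "rigidity of Anosov representations into proper reductive subgroups".
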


We have an upper bound on the growth of arbitrary non-lattice discrete subgroups coming from the effective property (T) of $G$ (\cite{Oh}, see Proposition \ref{lo}). Inequalities \eqref{vs3} and \eqref{vs0} show that our examples
almost saturate this bound. At least inside $\SO(n,2)$, this means that one cannot  hope to improve existing growth-gap theorems (e.g. \cite{LO}) by merely imposing Zariski density. It remains an intriguing question whether such an improvement is possible in other higher rank groups, for example in $\SL_n(\mathbb R), n\geq 3$.

\begin{remark}
There are many examples of Zariski-dense discrete subgroups in higher rank that are tempered, for instance, the image of any  Hitchin representation of a surface group into a real split simple algebraic group of higher rank (\cite{EO}, \cite{DKO}). \end{remark}

Our construction of a non-tempered Zariski-dense subgroup of $\SO(n,2)$ goes as follows. We begin with a uniform lattice $\Ga$ in $\SO(n,1)$ that decomposes as an amalgamated product of two subgroups
over a uniform lattice in $\SO(n-1,1)$. For $n\ge 3$, any lattice
of $\SO(n,1)$  is non-tempered, when viewed inside $\SO(n,2)$ (Corollary \ref{ntt}). The inclusion $\op{id}_\Ga: \Ga\hookrightarrow \SO(n,2)$ can be deformed via the bending construction (\cite{JM}, \cite{Ka}), yielding a discrete Zariski-dense
 subgroup $\Gamma_1$ of $\SO(n,2)$. The heart of the paper is to show that $\Gamma_1$ is non-tempered. We present two proofs. In the first, we consider  the Chabauty topology on the space of closed subgroups of $\SO(n,2)$ and show that the property of being non-tempered is open, by studying the convergence of the matrix coefficients of quasi-regular representations\footnote{Fell's continuity of induction theorem \cite[Theorem 4.2]{Fell} yields a more general statement; we keep our explicit proof because it gives a slightly stronger result for $K$-finite matrix coefficients for semisimple real Lie groups.}. As a consequence, all sufficiently small (discrete) deformations of $\SO(n,1)$ remain non-tempered, so $\Gamma_1$ satisfies Theorem \ref{m1}.
 For the second proof, we track how  the growth indicator of $\Ga$ evolves under the deformation, using the property that $\Ga$ is an Anosov subgroup. The limit cone of the deformation is known to vary continuously in this  setting 
 (\cite{Ka}, see also \cite{DO}) and a certain critical exponent of $\Gamma_1$ varies continuously as well \cite{BCLS}.  Hence, for small deformations, the growth indicator of $\Gamma_1$ can be controlled by the growth indicator of $\Ga$ and hence it is not smaller than the half-sum of positive roots $\rho$, proving Theorem \ref{mm}.

\medskip 
\textbf{Acknowledgement.}
MF was supported by the Dioscuri programme initiated by the Max Planck Society, jointly managed with the National Science Centre in Poland, and mutually funded by the Polish Ministry of Education and Science and the German Federal Ministry of Education and Research. 
We would like to thank Dongryul Kim and Tobias Weich  for useful comments on a preliminary version of the article. We would also like to thank Marc Burger for
telling us about the reference \cite{Fell}. We also thank the referees for their careful reading of our paper and their useful comments.

\section{Convergence of matrix coefficients and Chabauty topology}\label{sec-mc}
Let $G$ be a locally compact second countable  group.
Let $\mathfrak C=\mathfrak C_G$ denote the space of all closed subgroups of $G$ equipped with the Chabauty topology, that is, a sequence of closed subgroups $H_n$ converges to $H$ as $n\to \infty$
if for any element $h\in H$, there exists a sequence $h_n\in H_n$ with $h_n\to h$ and the limit points of any sequence $g_{n}\in H_{n}$ belong to $H$. The space $\mathfrak C$ is a compact space.
When a sequence $H_i$ converges to a closed subgroup $H$, we say that $H$ is the Chabauty limit of $H_i$.
Note that the Chabauty limit of a sequence of discrete subgroups is not necessarily a discrete subgroup.

For a unimodular closed subgroup $H$ of $G$, denote by $\nu_H$ a Haar measure on $H$. For $s\in C_c(G)$ and any locally finite measure $\nu$ on $H$
we write $$\nu(s):=\int_H s(h) d\nu (h).$$
Note that for a non-negative function $s \in C_c(G)$ with $\nu_H(s)\ne 0$,
the normalized measure $\nu_H(s)^{-1}\nu_H$ is independent of the choice of a Haar measure $\nu_H$.
Let ${\mathcal M}(G)$ be the space of all locally finite Borel measures on $G$, equipped with the weak$^*$-topology. Throughout the paper, $e$ denotes the identity element of a relevant group.

\begin{prop} \label{ch} Let $\Gamma_n$ be a sequence of discrete subgroups of $G$ converging to a closed subgroup $H$ in the Chabauty topology.  
  Then $H$ is unimodular, and for any non-negative function $s\in C_c(G)$ with  $s(e)>0$,  we have
\be\label{de} \lim_{n\to \infty} \nu_{\Gamma_n} (s)^{-1} \nu_{\Gamma_n} =
\nu_H (s)^{-1}\nu_H \quad \text{ in $\mathcal M(G)$.  } \ee 
\end{prop}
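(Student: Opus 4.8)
The plan is to establish two things: first, that the Chabauty limit $H$ is unimodular, and second, the weak$^*$-convergence of the normalized Haar measures. For unimodularity, I would exploit the fact that each $\Gamma_n$ is discrete, hence its counting measure is both left- and right-invariant, and show that invariance passes to the limit. Concretely, for $f \in C_c(G)$ and $g \in G$, the quantity $\nu_{\Gamma_n}(L_g f) = \sum_{\gamma \in \Gamma_n} f(g^{-1}\gamma)$ equals $\sum_{\gamma \in \Gamma_n} f(\gamma) = \nu_{\Gamma_n}(f)$ only when $g$ normalizes $\Gamma_n$, so that is not quite the right route; instead the cleanest approach is: once we know the normalized measures converge to \emph{some} nonzero locally finite measure $\mu$ supported on $H$, the limit measure is automatically both left- and right-$H$-invariant (since each $\nu_{\Gamma_n}$ is left- and right-$\Gamma_n$-invariant and the supports are "converging"), and a nonzero left-invariant locally finite Borel measure on the locally compact group $H$ is a Haar measure, forcing $H$ unimodular and $\mu$ a Haar measure. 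So the real content is the convergence statement, and unimodularity will fall out of it.

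For the convergence \eqref{de}, fix a non-negative $s \in C_c(G)$ with $s(e) > 0$. The first step is to show the normalizing constants $\nu_{\Gamma_n}(s) = \sum_{\gamma \in \Gamma_n} s(\gamma)$ are bounded away from $0$ and $\infty$. The lower bound is easy: since $s(e) > 0$ and $s$ is continuous, $s \geq c > 0$ on a neighborhood $U$ of $e$, and $e \in \Gamma_n$ for all $n$, so $\nu_{\Gamma_n}(s) \geq c$ uniformly. The upper bound requires the key geometric input from Chabauty convergence: on a compact set $K \supset \supp s$, the number of points of $\Gamma_n$ in $K$ stays bounded as $n \to \infty$ — otherwise one could extract more and more points of $\Gamma_n$ accumulating in $K$, and since limit points of sequences from $\Gamma_n$ lie in $H$, this would produce points of $H$ that are limits along which infinitely many distinct $\Gamma_n$-points cluster, contradicting discreteness of $H$ (one must be slightly careful: $H$ itself could be, say, a positive-dimensional group, in which case $\#(\Gamma_n \cap K)$ genuinely blows up). Hence I cannot hope for a uniform \emph{cardinality} bound in general; instead I should pass to \emph{mass} bounds. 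The correct statement is that $\nu_{\Gamma_n}$ has locally uniformly bounded mass: for any compact $K$, $\nu_{\Gamma_n}(K) = \#(\Gamma_n \cap K)$ is controlled because the $\Gamma_n$-points in $K$ are $\delta$-separated for a uniform $\delta > 0$ — indeed, if $\gamma, \gamma' \in \Gamma_n$ with $\gamma^{-1}\gamma'$ very close to $e$, then in the limit one gets an element of $H$ arbitrarily close to $e$; but $H$ need not be discrete, so this again fails. The resolution: use the test function $s$ directly. I claim $\sup_n \nu_{\Gamma_n}(s') < \infty$ for every $s' \in C_c(G)$; granting the lower bound on $\nu_{\Gamma_n}(s)$, it then suffices by a covering argument to bound $\nu_{\Gamma_n}(s')$ by a constant multiple of $\nu_{\Gamma_n}(\tilde s)$ for a fixed bump function $\tilde s$, using that $\tilde s$ can be taken $\geq$ a sum of $G$-translates covering $\supp s'$, combined with the observation that $\sum_{\gamma} \tilde s(g\gamma)$ is bounded \emph{uniformly in $g$} because it is a continuous function of $g$ that is, by Chabauty convergence, close to $\sum_{h \in H, \text{counting/Haar}} \tilde s(gh)$ — which is finite and locally bounded since $\nu_H$ is a Haar measure (this is where unimodularity/Haar-ness of $H$ is used, so the logical order is: establish $H$ is a closed subgroup (given), note it is unimodular \emph{a posteriori}, but to bootstrap we argue directly).

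Let me restructure to avoid circularity. \textbf{Step 1:} Show that along any subsequence, the measures $\nu_{\Gamma_n}(s)^{-1}\nu_{\Gamma_n}$ have a weak$^*$-convergent subsequence; this follows once we know the masses on compacta are bounded, and by compactness of the space of closed subgroups and a diagonal argument it suffices to bound $\nu_{\Gamma_n}(K)$ for the compact $K = \overline{\{g : gh \in \supp s' \text{ for some } h \text{ near } e\}}$-type sets. \textbf{Step 2:} Identify every subsequential limit $\mu$: it is a nonzero (its $s$-mass is $1$) locally finite Borel measure with $\supp \mu \subseteq H$ (limit points of $\Gamma_n$ lie in $H$). \textbf{Step 3:} Show $\mu$ is left-$H$-invariant: for $h \in H$ pick $h_n \in \Gamma_n$ with $h_n \to h$; then for $f \in C_c(G)$, $\int f(h_n^{-1}g)\,d\nu_{\Gamma_n}(g) = \int f(g)\,d\nu_{\Gamma_n}(g)$ by left-$\Gamma_n$-invariance, and passing to the limit (using $h_n^{-1} \to h^{-1}$ and uniform continuity of $f$, plus the mass bounds to control tails) gives $\int f(h^{-1}g)\,d\mu(g) = \int f(g)\,d\mu(g)$; similarly $\mu$ is right-$H$-invariant. \textbf{Step 4:} A nonzero locally finite left-invariant Borel measure on the locally compact second countable group $H$ is (a positive multiple of) a left Haar measure, and simultaneous right-invariance then forces $H$ to be unimodular and $\mu = c \cdot \nu_H$; evaluating at $s$ pins down $c = \nu_H(s)^{-1}$. \textbf{Step 5:} Since every subsequential limit equals the single measure $\nu_H(s)^{-1}\nu_H$, the whole sequence converges to it. The main obstacle is Step 1 — the uniform local mass bound $\sup_n \nu_{\Gamma_n}(K) < \infty$ — precisely because the Chabauty limit $H$ can be non-discrete, so naive $\delta$-separation arguments fail; the fix is to bound $\nu_{\Gamma_n}$ on compacta using a fixed nonnegative $w \in C_c(G)$ with $w \geq \mathbbm{1}_K$ and the uniform-in-$g$ estimate $\sum_{\gamma \in \Gamma_n} w(g\gamma) \leq C_w$, which itself is proved by a compactness/Chabauty argument: if it failed there would be $g_n$ (WLOG $g_n \to g_\infty$) with $\sum_{\gamma} w(g_n\gamma) \to \infty$, but the sets $g_n\Gamma_n \cap \supp w$ then accumulate, and passing to the limit the accumulation points lie in $g_\infty H$, forcing $\nu_H$ to have infinite mass on a compact set — contradicting that $H$, being a closed subgroup of $G$, is locally compact with locally finite Haar measure (and $H$ is genuinely a group, hence unimodular \emph{as an abstract fact about its own Haar measure}, independent of its embedding). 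This last point dissolves the apparent circularity: $H$'s own unimodularity is not in question; what Proposition~\ref{ch} asserts is that the \emph{ambient} normalization is compatible, i.e. that $\nu_{\Gamma_n}$ converges to $H$'s Haar measure and not to something degenerate.
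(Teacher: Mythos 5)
Your overall skeleton is the same as the paper's (extract subsequential weak\(^*\) limits, show any limit is supported on \(H\), prove left- and right-\(H\)-invariance by approximating \(h\in H\) with \(\gamma_n\in\Gamma_n\), invoke uniqueness of Haar measure to get unimodularity and to identify the limit, then conclude for the full sequence). The genuine gap is in your Step 1, the local mass bound. You assert that the \emph{unnormalized} counting measures are uniformly bounded on compacta: \(\sup_n \nu_{\Gamma_n}(s')<\infty\) for \(s'\in C_c(G)\), and \(\sum_{\gamma\in\Gamma_n} w(g\gamma)\le C_w\) uniformly in \(g\) and \(n\). This is false whenever the Chabauty limit \(H\) is non-discrete: already for \(G=\mathbb R\), \(\Gamma_n=\tfrac1n\mathbb Z\to H=\mathbb R\) one has \(\#(\Gamma_n\cap[0,1])\to\infty\); the paper's remark after the proposition gives another failure in \(\SL_2(\mathbb F_p((t)))\). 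Your proposed proof of the bound---that accumulation of the points of \(g_n\Gamma_n\) in \(\supp w\) would ``force \(\nu_H\) to have infinite mass on a compact set''---does not work: there is no a priori relation between the counting measures \(\nu_{\Gamma_n}\) and the Haar measure \(\nu_H\) (that relation, after normalization, is exactly what the proposition is proving), and arbitrarily many points of \(\Gamma_n\) clustering onto a compact piece of \(H\) is perfectly compatible with \(\nu_H\) being locally finite. For the same reason your preliminary claim that \(\nu_{\Gamma_n}(s)\) is bounded away from \(\infty\) is false in general (and, fortunately, not needed).

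What is needed, and what the paper proves, is a bound on the \emph{normalized} measures: \(\nu_{\Gamma_n}(s)^{-1}\nu_{\Gamma_n}(C)\le m_C/\kappa\), where \(U\) is a symmetric neighborhood of \(e\) with \(\kappa:=\inf_{U^2}s>0\) and \(m_C\) is the packing number of \(C\) by disjoint translates of \(U\). The mechanism is the left \(\Gamma_n\)-invariance of the counting measure: take a maximal subset \(F_n\subset\Gamma_n\cap C\) with disjoint translates \(\gamma U\); then \(\Gamma_n\cap C\subset F_nU^2\), and for each \(\gamma\in F_n\subset\Gamma_n\) one has \(\#(\Gamma_n\cap \gamma U^2)=\#(\Gamma_n\cap U^2)\le \kappa^{-1}\nu_{\Gamma_n}(s)\); summing over the at most \(m_C\) elements of \(F_n\) gives the ratio bound, with no reference to \(H\) at all. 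Your sentence about dominating \(\nu_{\Gamma_n}(s')\) by a constant multiple of \(\nu_{\Gamma_n}(\tilde s)\) gestures toward this, but you justify the constant by comparison with \(\nu_H\), which is circular; the \(\Gamma_n\)-invariance packing argument is what replaces it. Once Step 1 is repaired in this way, your Steps 2--5 go through essentially as in the paper.
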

\begin{proof}  Consider a non-negative function $s\in C_c(G)$ with  $s(e)>0$.
For simplicity, set $\nu_n=\nu_{\Gamma_n}$ and $\nu_n':=\nu_n(s)^{-1}\nu_n$.
Then $\nu_n'(s)=1$.

 First we show that the sequence $\nu_n'$ is relatively compact in $\mathcal M(G)$. Since $s(e)>0$, it follows from the continuity of $s$ that
 there exists a symmetric neighborhood $U$  of $e$ such that 
 $$\kappa:=\inf_{g\in U^2} s(g)>0.$$ 
 Fix any compact subset $C$ of $G$. Let 
\[m_C:=\max\{\# F\mid F\subset C, g_1U\cap g_2U=\emptyset \text{ for all } g_1\neq g_2\in F\}.\]
Note that
$$m_C\le \frac{\nu_G(CU)}{\nu_G(U)} .$$
For any $n\in\mathbb{N},$ choose a maximal subset 
$$F_n\subset \Gamma_n\cap C$$ such that $g_1U\cap g_2U=\emptyset$ for all $g_1\neq g_2\in F_n.$ Then $\Gamma_n\cap C\subset F_nU^2$, so
\[\nu_n(C)\leq \# F_n \cdot \nu_n(U^2) \leq \frac{m_C}{\kappa} \int s(g)d\nu_n(g).\]

Therefore for all $n\in \N$, we have $$\nu_n'(C) \le \frac{m_C}{\kappa} .$$ 
Since $C$ is an arbitrary compact subset of $G$, it follows that the sequence
$\nu_n'$, $n\in \N$, forms a relatively compact subset of $\mathcal M(G)$. 

Let $\nu\in \mathcal M(G)$ be a weak-* limit of the sequence $\nu_n'$. By construction, $\nu$ is a locally finite measure supported on $H$ and $\nu(s)=1$. 
It remains to show that $\nu$ is a Haar measure on $H$. Let $\varphi\in C_c(G)$ and $h\in H$. Let $\gamma_n\in \Gamma_n$ be a sequence with $\lim_{n\to\infty}\gamma_n=h$. Then, since $\nu_n'$ is a
Haar measure of $\Gamma_n$, we get 
\begin{multline*}\left|\int \varphi(g)-\varphi(hg)d\nu(g)\right|\leq \left|\int \varphi(g)d\nu(g)-\int \varphi(g)d\nu_n'(g)\right|\\+\left|\int \varphi(\gamma_n g)d\nu_n'(g)-\int\varphi(hg)d\nu_n'(g)\right| +\left|\int \varphi(hg)d\nu'_n(g)-\int\varphi(hg)d\nu(g)\right|.\end{multline*} 
The first and the third term converge to zero since $\nu_n'$ weakly converges to $\nu$. The middle term goes to zero because $\varphi(\gamma_n\cdot)$ converges uniformly to $\varphi(\cdot)$. Hence, the right hand side converges to $0$ as $n\to\infty$, so $\nu$ is indeed left $H$-invariant. Similarly, we can show that $\nu$ is also  a right $H$-invariant. This proves that $H$ is unimodular. Since $\nu(s)=1$, we have $\nu= \nu_H (s)^{-1}\nu_H$
and thus the desired convergence \eqref{de} follows from $\nu_n'\to \nu$.
\end{proof}

\begin{Rmk} The normalization of measures by the integral of $s$ is necessary in the above proposition. For example, if $G=\SL_2(\mathbb F_p((t)))$ and \[\Gamma_n:=\left\{\begin{pmatrix}
1 & f(t) \\
0 & 1
\end{pmatrix}\mid f(t) =a_nt^n+a_{n+1}t^{n+1}+\cdots + a_{2n}t^{2n}\in \mathbb F_p[t]  \right\},\]
then, as $n\to \infty$, $\Gamma_n$ converges to the trivial subgroup $\{e\}$ in the Chabauty topology, but the sequence $\nu_{\Gamma_n}$ of counting measures on $\Gamma_n$ fails to converge on the account of mass near identity blowing up to infinity. 
\end{Rmk}
On the other hand, we  can skip the normalization if the group $G$ has the no small subgroup property.
We say that a locally compact group $G$ has no small subgroup if
there exists a neighborhood of $e$ in $G$ which does not contain any non-trivial subgroup of $G$; this notion was first introduced in \cite{MY}. It is a well-known fact that
a real Lie group $G$ has no small subgroup; this can be easily seen,
using the fact that the exponential map is a diffeomorphism of a neighborhood of $0$ in $\frak g$ onto a neighborhood of the $e$ in $G$.

\begin{prop}\label{lem-wsc} Suppose that $G$ has the no small subgroup property (e.g., real Lie group).
Let $\Gamma_n$ be a sequence of discrete subgroups of $G$ which converges to a discrete subgroup $\Ga$ in the Chabauty topology.  Then as $n\to \infty$
$$\lim_{n\to \infty} \sum_{\gamma\in\Gamma_n}\delta_\gamma =
\sum_{\gamma\in\Gamma}\delta_\gamma  \quad\text{in $\cal M(G)$} $$
where $\delta_\ga$ denotes the Dirac measure at $\{\ga\}$.
\end{prop}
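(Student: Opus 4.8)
The plan is to reduce the statement to Proposition \ref{ch}. That proposition already gives the convergence of the \emph{normalized} measures $\nu_{\Ga_n}(s)^{-1}\nu_{\Ga_n}$ to $\nu_\Ga(s)^{-1}\nu_\Ga$; so it suffices to exhibit one non-negative $s\in C_c(G)$ with $s(e)>0$ for which the normalizing scalars $\nu_{\Ga_n}(s)$ are eventually equal to $\nu_\Ga(s)$. The only way this can fail is if $\Ga_n$ has nontrivial elements accumulating near $e$, and the whole point of the no small subgroup hypothesis is to forbid this when the Chabauty limit $\Ga$ is itself discrete (compare the preceding remark, where $G$ does have small subgroups and the conclusion breaks down).

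First I would fix an open symmetric neighborhood $V$ of $e$ with $\overline V$ compact such that $V$ contains no nontrivial subgroup of $G$ (possible by the no small subgroup property) and $\overline V\,\overline V\cap\Ga=\{e\}$ (possible since $\Ga$ is discrete, after shrinking $V$). \textbf{Claim:} $\Ga_n\cap V=\{e\}$ for all large $n$. Suppose not; passing to a subsequence, pick $\ga_n\in\Ga_n\cap V$ with $\ga_n\ne e$. Since $V$ is symmetric and contains no nontrivial subgroup, the cyclic group $\langle\ga_n\rangle$ is not contained in $V$, so there is a smallest integer $m_n\ge2$ with $\ga_n^{m_n}\notin V$ (note $\ga_n\in V$, so indeed $m_n\ge2$); then $\ga_n^{m_n-1}\in V$, whence $\ga_n^{m_n}=\ga_n^{m_n-1}\ga_n\in V\,V\subset\overline V\,\overline V$. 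The set $\overline V\,\overline V$ being compact, a further subsequence gives $\ga_n^{m_n}\to g$ with $g\notin V$ (as $V$ is open and $\ga_n^{m_n}\notin V$), so $g\ne e$. But $\ga_n^{m_n}\in\Ga_n$, so the Chabauty convergence $\Ga_n\to\Ga$ forces $g\in\Ga$; thus $g\in\overline V\,\overline V\cap\Ga=\{e\}$, a contradiction. This near-identity estimate is the main obstacle; once it is in hand the rest is bookkeeping.

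To finish, I would choose a non-negative $s\in C_c(G)$ with $s(e)>0$ and $\supp s\subset V$. By the Claim, for all large $n$ we have $\Ga_n\cap\supp s\subset\Ga_n\cap V=\{e\}$, so $\nu_{\Ga_n}(s)=\sum_{\ga\in\Ga_n}s(\ga)=s(e)$; likewise $\supp s\cap\Ga\subset V\cap\Ga=\{e\}$ gives $\nu_\Ga(s)=s(e)$. Hence $\nu_{\Ga_n}(s)\to s(e)=\nu_\Ga(s)>0$. Now Proposition \ref{ch}, applied to the discrete (hence unimodular) Chabauty limit $\Ga$ and this $s$, yields $\nu_{\Ga_n}(s)^{-1}\nu_{\Ga_n}\to\nu_\Ga(s)^{-1}\nu_\Ga$ in $\mathcal M(G)$, where $\nu_{\Ga_n}=\sum_{\ga\in\Ga_n}\delta_\ga$ and $\nu_\Ga=\sum_{\ga\in\Ga}\delta_\ga$ are the counting (Haar) measures. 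Multiplying these convergent measures by the convergent positive scalars $\nu_{\Ga_n}(s)\to\nu_\Ga(s)$ and using continuity of scalar multiplication for the weak-$*$ topology on $\mathcal M(G)$ gives $\sum_{\ga\in\Ga_n}\delta_\ga\to\sum_{\ga\in\Ga}\delta_\ga$, as required. (Alternatively, one can bypass Proposition \ref{ch} entirely: using the same $V$ and a smaller symmetric neighborhood $W$ with $W^2\subset V$ and $\overline W\,\overline W\cap\Ga=\{e\}$, one checks that for any compact $C\subset G$ and all large $n$ the set $\Ga_n\cap C$ meets each coset $\ga_i W$, $\ga_i\in\Ga\cap C$, in exactly one point, and these points converge to $\ga_i$; pairing with $\varphi\in C_c(G)$ then gives the convergence directly.)
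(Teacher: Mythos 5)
Your proof is correct, but it takes a genuinely different route from the paper's. The paper argues directly with a test function $\varphi$ supported in a compact set $C$: it picks a small symmetric neighborhood $U_1$ inside a no-small-subgroup neighborhood $U$, shows that for large $n$ the set $\Gamma_n$ avoids $C_1=C\setminus\bigcup_{\gamma\in\Gamma\cap C}\gamma U_1$ and meets each translate $\gamma U_1$ in exactly one point $\gamma_n\to\gamma$ (the NSS property enters to rule out two elements of $\Gamma_n$ in one translate: either $\Gamma_n\cap U_1^2$ would be a nontrivial subgroup inside $U$, or an auxiliary element would land in the forbidden region $C_1$), and then compares $\int\varphi\,d\nu_n$ with $\int\varphi\,d\nu$ term by term. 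You instead establish the cleaner uniform statement that $\Gamma_n\cap V=\{e\}$ for all large $n$, via the classical Zassenhaus-type powers argument (take the first power $\gamma_n^{m_n}$ escaping the symmetric NSS neighborhood $V$, extract a limit in the compact set $\overline V\,\overline V$, and use Chabauty convergence plus discreteness of $\Gamma$ to force that limit to be $e$, a contradiction), and then reduce the proposition to Proposition \ref{ch} by observing that for $s$ supported in $V$ the normalizing constants satisfy $\nu_{\Gamma_n}(s)=s(e)=\nu_\Gamma(s)$ eventually, so the normalized convergence upgrades to the unnormalized one. Your route buys a stronger intermediate fact (uniform discreteness of the $\Gamma_n$ near the identity) and outsources the tightness/bookkeeping to Proposition \ref{ch}, while the paper's self-contained argument produces slightly more explicit information, namely the local bijection $\gamma\mapsto\gamma_n$ on compacta; both hinge on the no-small-subgroup hypothesis in an essential way, and your parenthetical alternative at the end is essentially the paper's proof.
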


\begin{proof}
 Let  $\nu_n:=\sum_{\gamma\in\Gamma_n}\delta_\gamma$ and $\nu:=\sum_{\gamma\in\Gamma}\delta_\gamma$.
  Let $\varphi\in C_c(G)$.
 We need to show that 
$$ \lim_{n\to \infty} \int\varphi d\nu_n= \int\varphi d\nu .$$
 Let $\e>0$ be arbitrary.  Fix a compact subset $C\subset G$ and $\varphi\in C_c(G)$ supported on $C$.
 Enlarging $C$ if needed, we may assume that $\Gamma\cap \partial C=\emptyset.$
By the hypothesis that $G$ has no small subgroup property, there is an open neighborhood $U\subset G$ of the identity $e$ which contains no non-trivial subgroup of $G.$
We choose an open symmetric neighborhood $U_1\subset G$ of $e$ such that 
\begin{enumerate}
    \item $U_1^2\subset U;$
    \item $\gamma U_1^5\subset C$ for all $\gamma\in \Ga\cap C$;
    \item the collection $\gamma U_1^5, \gamma\in \G\cap C$, are pairwise disjoint; 
    \item  for all $\gamma\in C\cap \Gamma$ and $u\in U_1$,
    $$|\varphi(\gamma)-\varphi(\gamma u)|\leq \frac{\e }{\# (\Gamma\cap C)}.$$
\end{enumerate}
Consider the following compact subset
 $$C_1:=C\setminus \bigcup_{\gamma\in\Gamma\cap C}\gamma U_1.$$

 Note that $\Ga\cap C_1 =\emptyset$. Since the sequence $\Gamma_n$ converges to $\Gamma$ in the Chabauty topology, we have $\Gamma_n\cap C_1 =\emptyset$ for all $n$ large enough. For each fixed $\ga\in \Ga\cap C$, there exists $n_0=n_0(\ga)\ge 1$ such that
 $$\Gamma_n\cap \gamma U_1\neq\emptyset\text{ and } \Gamma_n\cap C_1=\emptyset \quad\text{ for all  $n\ge n_0$. }$$
Since $\Ga\cap C$ is finite, we have $n_0:=\max\{n_0(\ga):\ga\in \Ga\cap C\}<\infty$.

On the other hand, we claim that for any $\ga\in C\cap \Ga$ and $n\ge 1$,
 $$\# (\Gamma_n\cap \gamma U_1)\le 1.$$ Indeed, suppose there exists some element $\gamma_n\in\Gamma_n\cap \gamma U_1.$ Then 
\[\gamma_n^{-1}(\Gamma_n\cap \gamma U_1)=\Gamma_n\cap (\gamma_n^{-1}\gamma U_1)\subset \Gamma_n\cap U_1^2.\]
By the no-small-subgroups property of $G$, we have either $\Gamma_n\cap U_1^2=\{e\}$ or there is some element $\gamma_n'\in \Gamma_n\cap (U_1^4\setminus U_1^2)$; otherwise $\Gamma_n\cap U_1^2$ would be a non-trivial subgroup. In the second case, we would have 
$$\gamma_n\gamma_n' \in \gamma_n (U_1^4\setminus U_1^2)\subset \gamma U_1^5 \setminus \gamma U_1\subset C\setminus \gamma U_1.$$ Using property (3), we get $\gamma_n\gamma_n' \in C_1,$ contradicting the fact that $\Gamma_n\cap C_1=\emptyset.$ Therefore we must have $\Gamma_n\cap U_1^2=\{e\}$.
This implies that $\Gamma_n\cap \gamma U_1=\{\gamma_n\}$, proving the claim.

Therefore for all $\ga\in  \Ga\cap C$ and $n\ge n_0$, we have a unique element $\gamma_n\in \Gamma_n$
such that $\Gamma_n\cap \gamma U_1=\{\gamma_n\}$, and $\gamma_n \to \ga$ as $n\to \infty$.
Since
$$\int\varphi d\nu_n=\sum_{\gamma\in\Gamma\cap C}\varphi(\gamma_n)\quad\text{for all $n\ge n_0$},$$
we get from (4) that for all $n\ge n_0$,
\[\left|\int\varphi d\nu-\int\varphi d\nu_n\right| \leq \sum_{\gamma\in\Gamma\cap C}|\varphi(\gamma)-\varphi(\gamma_n)|\leq \varepsilon.\]
This finishes the proof.
\end{proof}

Let $G$ be unimodular and $dg$ a Haar measure on $G$.
For a closed unimodular subgroup $H$ of $G$,
there exists a unique $G$-invariant measure $d_{H\ba G}$ on $H\ba G$ such that for all $\psi\in C_c(G)$,
$$\int_G \psi dg=\int_{H\ba G}\int_H \psi(hg) d\nu_H(h) d_{H\ba G} (Hg). $$
We then have a unitary representation of $G$ on the Hilbert space
$$L^2(H\ba G)=\{f:H\ba G\to \br:\int_{H\ba G} |f|^2d_{H\ba G}<\infty\}$$ by right translations: $g.f(Hg'):=f(Hg'g)$ for $g,g'\in G$ and $f\in L^2(H\ba G)$.

\begin{prop}\label{l1} \label{app}
Let $\Gamma_n$ be a sequence of discrete subgroups of $G$ which converges to a closed  unimodular subgroup $H$ in the Chabauty topology. Let $K<G$ be a compact subgroup of $G$.

For any vectors $v, w \in L^2(H\bs G)$, there exist sequences $v_n, w_n \in L^2(\Gamma_{n}\bs G)$, $n\in \N$ such that  
\begin{enumerate}
    \item for all $g\in G$,
$$\lim_{n\to \infty} \langle v_n,g.w_n\rangle_{L^2(\Gamma_n\ba G)}=  \langle v, g.w\rangle_{L^2(H\ba G)}, $$
and the convergence is uniform on compact subsets of $G$;
 \item we have
$$\lim_{n\to \infty} \|v_n\|_{L^2(\Gamma_n\ba G)} =\|v\|_{L^2(H\ba G)}\;\; \& \;\; \lim_{n\to \infty} \|w_n\|_{L^2(\Gamma_n\ba G)} = \|w\|_{L^2(H\ba G)};$$
\item  we have that for all $n\in \mathbb N$, $$\dim \langle K.v_n \rangle \leq \dim \langle K.v\rangle \;\; \& \;\;
 \dim \langle K.w_n\rangle \leq \dim \langle K.w\rangle.$$
 \end{enumerate}
\end{prop}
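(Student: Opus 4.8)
The plan is to realize $L^2(\Gamma_n\backslash G)$-vectors as "averages over $\Gamma_n$" of compactly supported functions on $G$, and to use Proposition \ref{lem-wsc} (convergence of counting measures) together with the standard unfolding formula for matrix coefficients. Concretely, for $f\in C_c(G)$ define the Poincar\'e-type series $P_n f(\Gamma_n g):=\sum_{\gamma\in\Gamma_n} f(\gamma g)$ and $P_H f(Hg):=\int_H f(hg)\,d\nu_H(h)$; these are well-defined elements of $L^2(\Gamma_n\backslash G)$ and $L^2(H\backslash G)$ respectively, since $f$ has compact support and $\Gamma_n$ is discrete (and $H$ is unimodular by Proposition \ref{ch}). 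A direct unfolding computation gives, for $f_1,f_2\in C_c(G)$ and $g\in G$,
\[
\langle P_n f_1, g.P_n f_2\rangle_{L^2(\Gamma_n\backslash G)}=\int_G f_1(x)\Big(\sum_{\gamma\in\Gamma_n} f_2(\gamma x g)\Big)\,dx,
\]
and likewise with $\Gamma_n$ replaced by $H$ and the sum by $\int_H$. The inner expression is exactly $\nu_{\Gamma_n}$ (resp. $\nu_H$) applied to the function $y\mapsto f_2(y x g)$, which depends continuously on $x$ with compact support contained in a fixed compact set as $x$ ranges over $\supp f_1$ and $g$ over a fixed compact set. Hence Proposition \ref{lem-wsc} and dominated convergence yield
\[
\lim_{n\to\infty}\langle P_n f_1, g.P_n f_2\rangle_{L^2(\Gamma_n\backslash G)}=\langle P_H f_1, g.P_H f_2\rangle_{L^2(H\backslash G)},
\]
uniformly for $g$ in compact sets (the family of integrands is equicontinuous in $g$). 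Taking $f_1=f_2=f$ and $g=e$ gives (2) for vectors of the form $P_H f$, and (1) for such vectors.

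The next step is to upgrade from the dense subspace $\{P_H f: f\in C_c(G)\}$ of $L^2(H\backslash G)$ to arbitrary $v,w$. The subspace is dense: given $F\in L^2(H\backslash G)$ and $\delta>0$, pick $\tilde F\in C_c(G)$ approximating a lift, then $P_H\tilde F$ approximates $F$. So choose $f^{(v)},f^{(w)}\in C_c(G)$ with $\|P_H f^{(v)}-v\|<\delta$, $\|P_H f^{(w)}-w\|<\delta$; set $v_n:=P_n f^{(v)}$, $w_n:=P_n f^{(w)}$. From the convergence of norms and inner products just established, for $n$ large $\|v_n\|,\|w_n\|$ are within $2\delta$ of $\|v\|,\|w\|$, and $|\langle v_n,g.w_n\rangle-\langle v,g.w\rangle|$ is small uniformly on compacta, by the triangle inequality and Cauchy--Schwarz (using that $\|v_n\|$, $\|w_n\|$ stay bounded). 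A routine diagonal argument over $\delta=1/k$ then produces genuine sequences $v_n,w_n$ satisfying (1) and (2) exactly.

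For (3), the point is to choose the approximants $f^{(v)},f^{(w)}$ compatibly with the $K$-action. Let $V:=\langle K.v\rangle$, a finite-dimensional $K$-subrepresentation of $L^2(H\backslash G)$. Approximate $v$ by some $P_H f_0$ as above, then replace it by its $V$-component: since $V$ is $K$-invariant and finite-dimensional, the orthogonal projection $\pi_V$ onto $V$ commutes with $K$, and one checks $\pi_V(P_H f_0)=P_H f_0'$ for a suitable $f_0'\in C_c(G)$ — indeed $\pi_V$ can be written as convolution against a $K$-bi-type kernel (a finite combination of $\int_K \overline{\chi}(k)\, k.(-)\,dk$ over the relevant $K$-types), and such an operator carries $P_H f$ to $P_H(f\ast(\text{kernel}))$, still in $C_c(G)$. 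Taking $f^{(v)}$ of this form, $v_n=P_n f^{(v)}$ lies in the image of the same projection operator applied at level $\Gamma_n$, whose range has dimension $\le\dim V$ (the $K$-type multiplicities can only drop), giving $\dim\langle K.v_n\rangle\le\dim\langle K.v\rangle$; similarly for $w$. I expect the main obstacle to be the bookkeeping in this last step: verifying that the $K$-finite projection intertwines the $P_n$ and $P_H$ constructions and that it does not increase $K$-type dimensions requires a careful but elementary argument with convolution operators and the Peter--Weyl decomposition, and one must ensure the approximation, the norm control, and the $K$-finiteness constraint can all be met simultaneously — handled by performing the $K$-finite truncation \emph{before} passing to $\Gamma_n$, so that properties (1)--(3) are inherited uniformly.
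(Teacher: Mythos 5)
Your core convergence step does not work in the generality required by the proposition, because you invoke Proposition \ref{lem-wsc} where it does not apply. Proposition \ref{lem-wsc} concerns convergence of the \emph{unnormalized} counting measures $\sum_{\gamma\in\Gamma_n}\delta_\gamma$ and is only valid when the Chabauty limit is a \emph{discrete} subgroup (and $G$ has no small subgroups). In Proposition \ref{app} the limit $H$ is an arbitrary closed unimodular subgroup, and the relevant case for this paper is precisely when $H$ may fail to be discrete. In that case $\sum_{\gamma\in\Gamma_n} f_2(\gamma x g)$ does not converge to $\int_H f_2(hxg)\,d\nu_H(h)$: already for $G=\mathbb{R}$, $\Gamma_n=\tfrac1n\mathbb{Z}$, $H=\mathbb{R}$, the sums blow up like $n\int f_2$, and the norms $\|P_nf\|_{L^2(\Gamma_n\backslash G)}$ blow up as well, so both (1) and (2) fail for your unnormalized Poincar\'e series. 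This is exactly why the paper proves Proposition \ref{ch} (convergence of the \emph{normalized} measures $\nu_{\Gamma_n}(s)^{-1}\nu_{\Gamma_n}\to\nu_H(s)^{-1}\nu_H$) and defines $v_n:=\alpha_n^{-1/2}\sum_{\gamma\in\Gamma_n}\tilde v(\gamma g)$ with $\alpha_n=\sum_{\gamma\in\Gamma_n}s(\gamma)$ and $\int_H s\,d\nu_H=1$; with that scaling the unfolding computation you wrote goes through verbatim, with Proposition \ref{ch} in place of Proposition \ref{lem-wsc}. Your argument is fine only in the special case where $H$ is discrete, which is not sufficient here (e.g.\ for Theorem \ref{cl} one cannot assume the Chabauty limit is discrete).

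There is also a gap in your treatment of (3). You conflate the orthogonal projection $\pi_V$ onto the cyclic space $V=\langle K.v\rangle$ with the projection onto the $K$-isotypic components of $V$: only the latter is a convolution against characters of $K$, and its range in $\langle K.f_0\rangle$ (or in $L^2(\Gamma_n\backslash G)$) can contain the relevant $K$-types with strictly larger multiplicity than $V$ does, so the bound $\dim\langle K.v_n\rangle\le\dim V$ does not follow from "$K$-type multiplicities can only drop". The paper avoids this by never projecting back at all: every approximation step is implemented by an operator commuting with the right $K$-action (multiplication by right $K$-invariant cutoffs $u_m$, convolution with a $K$-conjugation-invariant approximate identity $\phi_m$, multiplication by the right $K$-invariant function $\varphi$ in the lift, and the $K$-equivariant averaging $f\mapsto\alpha_n^{-1/2}\sum_\gamma f(\gamma\,\cdot)$), so the dimension of the $K$-cyclic span can only decrease at each step. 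Incidentally, the paper also arranges $\int_H\tilde v(hg)\,d\nu_H(h)=v(g)$ exactly (by the choice of $\varphi$), which removes the need for your $\delta$-approximation and diagonal argument; that part of your plan is repairable, but the two issues above need the fixes indicated.
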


\begin{proof}
Since $C_c(H\ba G)$ is dense in $L^2(H\ba G)$,
the matrix coefficient $g\mapsto \langle v, g.w\rangle_{L^2(H\ba G)}$ can be approximated by the matrix coefficients for continuous compactly supported functions, uniformly on compact subsets of $G$. This approximation can be done without increasing the dimensions of the spaces spanned by the $K$-orbits of $v$ and $w$. In fact, let $u_m$ be a sequence of compactly supported right $K$-invariant functions on $H\ba G$ converging to the constant function $1$ uniformly on compact subsets of $H\ba G$. Since the multiplication by $u_m$ is $K$-equivariant, we have $\dim \langle K.(u_m v)\rangle \leq \dim \langle K.v \rangle$, similarly for $w$. The matrix coefficients $g\mapsto \langle u_m v,g.u_m w\rangle_{L^2(H\ba G)}$ converge to $g\mapsto \langle v,g.w\rangle_{L^2(H\ba G)}$ uniformly on $G$. Thus we have shown  that $v,w$ can be replaced by compactly supported functions, spanning $K$-invariant subspaces of equal or smaller dimension. We need one more step to replace them by continuous functions. 

Let $\tilde \phi_m\in C_c(G)$ be a sequence of non-negative continuous functions with $\int \tilde \phi_m(g)dg=1$ and support contained in some neighborhood $\tilde U_m$ of $e$ such that $\tilde U_m\to \{e\}$ as $m\to \infty$.
Define $\phi_m\in C_c(G)$ by 
$$\phi_m(g)=\int_K \tilde \phi_m(k^{-1}gk)dk\quad\text{ for $g\in G$,}$$ where 
 $dk$ is the probability Haar measure on $K$. 
 Clearly, $\phi_m$ is non-negative, continuous and $\int \phi_m(g)dg=1$. The support of $\phi_m$ is contained in
 $U_m:=\{k\tilde U_m k^{-1}:k\in K\}$.
Note that $U_m\to \{e\}$ as $m\to \infty$; otherwise, we have, by passing to a subsequence,  $k_m g_m k_m^{-1}\to g$ for some  $k_m\in K$ converging to $k_0\in K$,  $g_m\in \tilde U_m$ and $g\ne e$.
Since $g_m \to e$ as $m\to \infty$, this is a contradiction.

 Consider the convolution $v\ast \phi_m$: 
 $$v\ast \phi_m (Hg)=\int_G v(Hgx) \phi_m(x^{-1}) dx\quad\text{ for $Hg\in H\ba G$}$$ and similarly for $w\ast \phi_m$.
 The functions $v\ast \phi_m$ and $w\ast \phi_m$ are continuous compactly supported functions on $H\ba G$.
 
Since the sequence $\phi_m$ is an approximate identity,
the matrix coefficient $g\mapsto\langle v\ast \phi_m, g. w\ast \phi_m\rangle_{L^2(H\ba G)}$ converges  to $g\mapsto\langle v, g. w\rangle_{L^2(H\ba G)}$, uniformly on compact sets. Furthermore, because $\phi_m$ is $K$-conjugation invariant,
 the map $v\mapsto v\ast \phi_m$ commutes with the action of $K$:
 $k.(v\ast \phi_m)= (k.v)\ast \phi_m$ for all $k\in K$.
 It follows that  
 $$\dim \langle K.(v\ast \phi_m)\rangle\leq  \dim \langle K.v\rangle,$$ and similarly for $w$. Therefore, we may assume  without loss of generality that $v, w \in C_c(H\bs G).$

 First, let $\tilde v_0\in C(G)$ be the lift of $v$ to $G$, i.e., for all $g\in G$,
 $\tilde v_0(g):=v(Hg)$. We note that 
$$\dim \langle K.v\rangle =\dim \langle K.\tilde v_0\rangle $$
 
Now, we choose a right $K$-invariant non-negative function $\varphi\in C_c(G)$ such that $\int_H \varphi(h g)d\nu_H(h) =1$ for every $g\in H\supp v\cup H\supp w$. 
 
Define $\tilde v \in C_c(G)$  by
$\tilde v(g):=\varphi(g)\tilde v_0(g)$ for all $g\in G$. Then for each $g\in G$, we have
\[ \int_H \tilde v(h g)d\nu_H(h) =v (g).\] Moreover $$\dim \langle K.\tilde v\rangle \leq \dim \langle K.\tilde v_0\rangle =\dim \langle K.v\rangle.$$

Choose a non-negative function $s\in C_c(G)$ such that $s(e)>0$ and 
$$\int_H s(h)d\nu_H(h)=1.$$  Set $\alpha_n:=\sum_{\gamma\in\Gamma_n}s(\gamma)$, and define $v_n\in C_c^\infty(\Gamma_n\bs G)$ as follows: for all $g\in G$,
 \[v_n(g) :=\alpha_n^{-1/2}\sum_{\gamma\in\Gamma_n}\tilde v(\gamma g) .\]
Then \[\dim \langle K.v_n\rangle \leq \dim\langle K.\tilde v\rangle \leq \dim \langle K.v \rangle.\]  

Let $\tilde w \in C_c(G)$ and $w_n \in C_c^\infty(\Gamma_n\ba G)$ be functions constructed in the same way for the vector $w$.

We claim that for all $g\in G$,
$$ \langle v_n, g. w_n\rangle_{L^2(\Gamma_n\ba G)} \to \langle v, g.w \rangle_{L^2(H\bs G)},$$ 
uniformly on compact subsets of $G$.
Indeed, 
\begin{align}\label{eq1} \langle v_n, g. w_n\rangle_{L^2(\Gamma_n\ba G)}
=&\alpha_n^{-1}\int_{\Gamma_n\bs G}\left(\sum_{\gamma\in\Gamma_n}\tilde v(\gamma x)\sum_{\gamma'\in\Gamma_n}\tilde w(\gamma' xg)\right)dx\\
=&\int_{G}\tilde v(x)\left(\alpha_n^{-1}\sum_{\gamma'\in\Gamma_n}\tilde w(\gamma' xg)\right)dx.\nonumber
\end{align}

Proposition \ref{ch} yields the weak-* convergence of measures $$\alpha_n^{-1}\sum_{\gamma'\in \Gamma_n}\delta_{\gamma'}\to d\nu_H.$$
It follows that 
\[\lim_{n\to \infty} \alpha_n^{-1}\sum_{\gamma'\in\Gamma_n}\tilde w(\gamma'x g)=\int_{H}\tilde w(h  xg)d\nu_H(h)\]  and the convergence is uniform for all $g$ and $x$ in a given compact subset of $G$. Indeed, for $x,g\in C,$ $C$ compact, the family of functions $\tilde w(\cdot xg)$ is equicontinuous and supported in a single compact set, so the integrals converge uniformly for any weak-* convergent sequence of measures. Since $\tilde v$ is compactly supported, we get 
\begin{align*}\lim_{n\to\infty}\langle v_n, g. w_n\rangle_{L^2(\Gamma_n\ba G)}  =&\int_G\tilde v(x)\int_\Gamma \tilde w(hx g)d\nu_H(h) dx ,\end{align*}
and the convergence is uniform for all $g$ in a given compact subset of $G$.
Since
\begin{align*} &\int_G\tilde v(x)\int_\Gamma \tilde w(h x g)d\nu_H(h) dg  =\int_G \tilde v(x) w(H x g)dx \\ & = \int_{H\bs G}v(H x) w(Hxg)d_{H\ba G} (Hx)=\langle v,g. w\rangle_{L^2(H\ba G)},
\end{align*}
this finishes the proof of (1) and (3). The claim (2) follows since the above argument applies when $v=w$ and $g=e$ and hence
gives $ \langle v_n, v_n\rangle_{L^2(\Gamma_n\ba G)} \to \langle v, v \rangle_{L^2(H\bs G)}$ and similarly for $w_n$ and $w$.
\end{proof}

\begin{remark}
This proposition implies that if $\Gamma_n$ converges to $H$ in the Chabauty topology,
then $L^2(H\bs G)$ is weakly contained in $\bigoplus_{n=n_0}^\infty L^2(\Gamma_n\bs G)$ for all $n_0\ge 1.$\footnote{Since submitting this paper, we have learned that this conclusion already follows from \cite[Theorem 4.2]{Fell}.}
\end{remark}

\section{Temperedness is a closed condition in $\Hom(\Ga, G)$}\label{sec-tempcl}
Let $G$ be a connected semisimple real algebraic group.
Let $P$ be a minimal parabolic subgroup of $G$ with a fixed Langlands decomposition $P=MAN$ where $A$ is a maximal real split torus of $G$, $M$ is the maximal compact subgroup of $P$, which commutes with $A$, and $N$ is the unipotent radical of $P$. We denote by $\fg$ and $\fa$ the Lie algebras of $G$ and $A$ respectively.
We fix a positive Weyl chamber $\fa^+\subset \fa$ so that $\Lie N$ consists of positive root subspaces.
Let $\Sigma^+=\Sigma^+(\fg, \fa)$ denote the set of all  positive roots for $(\fg, \fa^+)$. For each $\alpha\in \Sigma^+$, let $m(\alpha)$ be its multiplicity. We also write $\Pi\subset \Sigma^+$ for the set of all simple roots.
We denote by \be\label{rho} \rho=\frac{1}{2}\sum_{\alpha\in \Sigma^+} m(\alpha) \alpha\ee  the half sum of the positive roots for $(\mathfrak g, \mathfrak a^+)$, counted with multiplicity.

We fix a maximal compact subgroup $K$ of $G$ so that the
Cartan decomposition $G=K (\exp \fa^+) K$ holds, that is, for any $g\in G$, there exists a unique element $\mu(g)\in \fa^+$ such that $g\in K \exp \mu(g) K$.

Let $dg$ be a Haar measure on $G$.
The right translation action of $G$ on itself induces the regular representation $L^2(G)=L^2(G, dg)$.

Following Harish-Chandra, we call a unitary representation $(\pi,\cal H) $ of $G$ {\it tempered} if $\pi$ is weakly contained in the regular representation $L^2(G)$.

For any $p>0$, a unitary representation $(\pi,\cal H) $ of $G$ is said to be almost $L^p$-integrable if
all of its matrix coefficients are $L^{p+\e}$-integrable for any $\e>0$.

Denote by $\Xi=\Xi_G$ the Harish-Chandra function of $G$. It  is a bi-$K$-invariant function satisfying that
 for any $\e>0$, there exist $c, c_\e>0$ such that
$$c e^{- \rho (v)} \le \Xi(\exp v)\le c_\e e^{-(1-\epsilon) \rho (v)}\quad\text{ for all $v\in \fa^+$.}$$
We will use the following characterization of a tempered representation of $G$ given by Cowling, Haggerup and Howe:
\begin{theorem}\cite{CHH}  \label{chh}  For a unitary representation $(\pi,\cal H)$ of $G$,
the following are equivalent:
\begin{enumerate}
    \item $\pi$ is  tempered;
    \item $\pi$ is almost $L^2$-integrable;
\item for any $K$-finite unit vectors
$v_1, v_2\in \cal H$ and any $g\in G$,
$$|\langle \pi(g) v_1, v_2\rangle |\le \left(\op{dim}\langle \pi(K)v_1 \rangle\cdot  \op{dim}\langle \pi(K)v_2  \rangle \right)^{1/2}  \Xi_G(g). $$
\end{enumerate}
\end{theorem}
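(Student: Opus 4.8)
The plan is to run the cycle $(1)\Rightarrow(3)\Rightarrow(2)\Rightarrow(1)$, with the Harish-Chandra function $\Xi_G$ as the hinge. One preliminary point organizes everything: from the quoted two-sided bound on $\Xi_G$ and the Cartan integration formula (for bi-$K$-invariant functions, $\int_G f\,dg$ is a constant times $\int_{\fa^+}f(\exp v)J(v)\,dv$ with $J(v)\le e^{2\rho(v)}$ everywhere and $J(v)\asymp e^{2\rho(v)}$ in directions bounded away from the walls), one gets $\Xi_G\in L^{2+\e}(G)$ for every $\e>0$ but $\Xi_G\notin L^2(G)$; this is exactly why the exponent in $(2)$ is forced to be ``almost $2$'' and not $2$.

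For $(1)\Rightarrow(3)$ the model case is $\pi=L^2(G)$ itself: if $f_1,f_2\in L^2(G)$ are $K$-finite, then, writing $f_1$ and $f_2$ through the spherical Plancherel decomposition (which is supported on the tempered spectrum, where the elementary spherical functions are dominated by $\Xi_G$), Harish-Chandra's estimates bound $g\mapsto\langle g.f_1,f_2\rangle$ on each $K$-isotypic block by $\big(\dim\langle K.f_1\rangle\cdot\dim\langle K.f_2\rangle\big)^{1/2}\Xi_G(g)$. For a general tempered $\pi$ one cannot merely pass to weak limits, since weak containment only gives pointwise-on-compacta approximation of diagonal matrix coefficients, with no a priori control on $\dim\langle K.v\rangle$. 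The fix is to decompose $v,w$ into $K$-isotypic components, use that the $K$-span of a single vector lying in one isotypic block has dimension at most the square of that $K$-type's dimension, apply the $L^2(G)$-estimate on each finite-dimensional block, and reassemble by a Cauchy--Schwarz over $K$-types. Carrying out this reassembly so that the clean majorant $\Xi_G$ survives (rather than $\Xi_G$ times a power of a dimension) is the technical heart of \cite{CHH}, and is the step I expect to be the main obstacle. The reverse step $(3)\Rightarrow(2)$ is then comparatively soft: $(3)$ together with $\Xi_G\in L^{2+\e}(G)$ gives $\langle\,\cdot\,v,v\rangle\in L^{2+\e}(G)$ for $K$-finite $v$; approximating a general vector in norm by $K$-finite ones and combining the uniform-on-compacta convergence of the diagonal coefficients with the $\Xi_G$-majorant (organized again by $K$-types) extends this to all $v$, and then to all matrix coefficients by polarization.

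For $(2)\Rightarrow(1)$ one must show $\|\pi(\varphi)\|\le\|\lambda_G(\varphi)\|$ for every $\varphi\in C_c(G)$, where $\lambda_G$ is the regular representation. Replacing $\varphi$ by $h:=\varphi^{*}*\varphi$ reduces this to bounding $\langle\pi(h)v,v\rangle$ for $v$ in a dense $K$-finite set; iterating the Cauchy--Schwarz inequality $\langle\pi(h)v,v\rangle\le\langle\pi(h^{*2})v,v\rangle^{1/2}$ gives $\langle\pi(h)v,v\rangle\le\langle\pi(h^{*2^k})v,v\rangle^{1/2^k}=\Big(\int h^{*2^k}\,\langle\,\cdot\,v,v\rangle\Big)^{1/2^k}\le\big(\|h^{*2^k}\|_{L^{(2+\e)'}}\,\|\langle\,\cdot\,v,v\rangle\|_{L^{2+\e}}\big)^{1/2^k}$, and the Kunze--Stein phenomenon ($\|\lambda_G(h)\|\le C_p\|h\|_{L^p}$ for $1\le p<2$, together with interpolation against the trivial $L^1$-bound) forces $\limsup_k\|h^{*2^k}\|_{L^{(2+\e)'}}^{1/2^k}$ to converge to $\|\lambda_G(h)\|$ as $\e\to0$; letting $k\to\infty$ and then $\e\to0$ yields $\|\pi(\varphi)\|^2\le\|\lambda_G(\varphi)\|^2$, i.e. $\pi\prec L^2(G)$. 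An alternative for this last step, closer to \cite{CHH}, first promotes $(2)$ to the pointwise bound $|\langle\,\cdot\,v,v\rangle|\le C_v\,\Xi_G$ (spherical Plancherel, in its matrix-valued form for $K$-finite $v$, forces the representing measure to live on the tempered spectrum), and then deduces $\pi\prec L^2(G)$ from a domination argument for positive-definite functions, using that $\Xi_G$ is itself a matrix coefficient of the tempered representation $\mathrm{Ind}_P^G\mathbf{1}$. Either way, the genuinely non-formal inputs are Harish-Chandra's sharp estimates for tempered matrix coefficients and, for $(2)\Rightarrow(1)$, the Kunze--Stein phenomenon; the soft part is the $K$-type bookkeeping that transfers these bounds to an arbitrary tempered $\pi$.
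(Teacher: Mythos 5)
The paper does not prove this statement: Theorem \ref{chh} is quoted verbatim from Cowling--Haagerup--Howe \cite{CHH}, so there is no internal argument to compare yours with; your sketch has to be measured against the original proof. Measured that way, the overall cycle $(1)\Rightarrow(3)\Rightarrow(2)\Rightarrow(1)$ is the right architecture, but each leg has a problem. For $(1)\Rightarrow(3)$ you explicitly leave open the ``reassembly over $K$-types,'' and that is not a technicality one can defer: it is the content of the implication. The actual route is not a blockwise estimate with per-$K$-type dimension factors (which, as you suspect, does not recombine into the clean constant), but a two-step argument: first one proves for the regular representation the bi-$K$-averaged bound $\int_K\int_K|\langle\pi(k_1gk_2)v,w\rangle|\,dk_1\,dk_2\le\|v\|\,\|w\|\,\Xi_G(g)$ (Herz's majoration principle); this averaged inequality, unlike a pointwise one, does pass to weak limits, so it holds for every tempered $\pi$; then for $K$-finite $v,w$ one converts the $K$-average into a pointwise bound by expanding over orthonormal bases of $\langle \pi(K)v\rangle$ and $\langle \pi(K)w\rangle$ and applying Cauchy--Schwarz, which is exactly where the factor $(\dim\langle \pi(K)v\rangle\cdot\dim\langle \pi(K)w\rangle)^{1/2}$ comes from. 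So your stated worry about weak limits is resolved by averaging first, not by isotypic bookkeeping.

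The step $(3)\Rightarrow(2)$ as you describe it also fails for the statement as the paper uses it: ``almost $L^2$'' here means \emph{all} matrix coefficients lie in $L^{2+\e}$, and approximating a general vector $v$ by $K$-finite vectors $v_n$ only gives uniform convergence of $\phi_{v_n,v_n}$ to $\phi_{v,v}$ while the $L^{2+\e}$ majorants carry the constants $\dim\langle \pi(K)v_n\rangle$, which blow up; uniform convergence with unbounded $L^{2+\e}$ norms does not yield $\phi_{v,v}\in L^{2+\e}$. The standard repair is duality together with the Kunze--Stein phenomenon: temperedness gives $|\int_G f\,\phi_{v,w}\,dg|=|\langle\pi(f)v,w\rangle|\le\|\lambda_G(f)\|\le C_p\|f\|_{L^p}$ for every $f\in C_c(G)$ and every $p<2$, hence $\phi_{v,w}\in L^{p'}$ for every $p'>2$. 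Ironically, you invoke Kunze--Stein in the one leg where it is neither needed nor helpful: in $(2)\Rightarrow(1)$ the inequality $\|\lambda_G(h)\|\le C_p\|h\|_{L^p}$ points the wrong way for controlling $\|h^{*2^k}\|_{L^{(2+\e)'}}$, and the CHH argument instead interpolates between the trivial bounds $\|h^{*2^k}\|_{L^1}\le\|h\|_{L^1}^{2^k}$ and $\|h^{*2^k}\|_{L^2}\le\|\lambda_G(h)\|^{2^k-1}\|h\|_{L^2}$, after which the iterated Cauchy--Schwarz you describe does close the loop (and does so for arbitrary locally compact groups, with no semisimplicity used). With these three corrections the outline becomes the CHH proof; as written, the hard implication is unproved and one implication is argued by a method that does not converge to the stated conclusion.
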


\begin{Def}\label{t2}
    We say that a unimodular subgroup $H$ is a {\it tempered} subgroup of $G$ (or $G$-tempered) if  
   the quasi-regular representation $L^2(H\ba G)$ is a tempered representation of $G$. 
\end{Def}
\begin{lem}\cite[Proposition 3.1]{BK} \label{bk} Let  $H$ be a unimodular closed subgroup of $G$. 
    If $H$ is $G$-tempered, then any unimodular closed subgroup $H'<H$ is also $G$-tempered.
\end{lem}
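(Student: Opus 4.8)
The plan is to verify the almost-$L^2$-integrability criterion of Theorem~\ref{chh} for $L^2(H'\ba G)$ by \emph{dominating} every diagonal matrix coefficient of $L^2(H'\ba G)$ pointwise by a diagonal matrix coefficient of $L^2(H\ba G)$. The geometric input is the fibration $H'\ba G\to H\ba G$ whose fibre is $H'\ba H$: since $G$, $H$ and $H'$ are all unimodular, the quotient integration formula for the tower $H'<H<G$ lets us normalize the $G$-invariant measures so that
\[
\int_{H'\ba G}\Phi(H'x)\,d(H'x)\;=\;\int_{H\ba G}\Big(\int_{H'\ba H}\Phi(H'hx)\,d(H'h)\Big)d(Hx)\qquad(\Phi\in C_c(H'\ba G)),
\]
which extends to all nonnegative measurable $\Phi$; equivalently, $L^2(H'\ba G)$ disintegrates as a direct integral of copies of $L^2(H'\ba H)$ over $H\ba G$.

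Concretely, for $f\in L^2(H'\ba G)$ and almost every $Hx\in H\ba G$ one restricts $f$ to the fibre, $f^x(H'h):=f(H'hx)$, which is a well-defined element of $L^2(H'\ba H)$ up to a norm-preserving right $H$-translation; hence $F(Hx):=\|f^x\|_{L^2(H'\ba H)}$ is a well-defined measurable function on $H\ba G$, and the tower formula applied to $|f|^2$ shows $F\in L^2(H\ba G)$ with $\|F\|_{L^2(H\ba G)}=\|f\|_{L^2(H'\ba G)}$. Applying the tower formula to $H'y\mapsto |f(H'yg)|\,|f(H'y)|$ and then Cauchy--Schwarz on each fibre gives, for every $g\in G$,
\[
\big|\langle \pi_{H'\ba G}(g)f,\,f\rangle\big|\;\le\;\int_{H\ba G}\|f^{xg}\|_{L^2(H'\ba H)}\,\|f^{x}\|_{L^2(H'\ba H)}\,d(Hx)\;=\;\langle \pi_{H\ba G}(g)F,\,F\rangle,
\]
where the last equality uses $F(Hxg)=\|f^{xg}\|_{L^2(H'\ba H)}$ together with the fact that $\pi_{H\ba G}$ acts by right translation (so the right-hand side is indeed a diagonal matrix coefficient of $L^2(H\ba G)$, and is nonnegative).

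To finish: if $H$ is $G$-tempered then $L^2(H\ba G)$ is almost $L^2$-integrable by Theorem~\ref{chh}, so $g\mapsto\langle\pi_{H\ba G}(g)F,F\rangle\in L^{2+\e}(G)$ for every $\e>0$; by the displayed domination the same holds for $g\mapsto\langle\pi_{H'\ba G}(g)f,f\rangle$, and since $f$ was arbitrary every diagonal matrix coefficient of $L^2(H'\ba G)$ lies in $\bigcap_{\e>0}L^{2+\e}(G)$. By polarization (complexifying the $L^2$-spaces if needed) the same is true of every matrix coefficient, so $L^2(H'\ba G)$ is almost $L^2$-integrable and hence tempered by Theorem~\ref{chh}; i.e.\ $H'$ is $G$-tempered. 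The only points needing care are the unimodular tower formula and the measurability and $L^2$-membership of $F$, both routine; alternatively one could argue more softly via induction in stages $L^2(H'\ba G)\cong\op{Ind}_H^G L^2(H'\ba H)$, Fell's continuity of induction, and the Fell absorption principle, but the explicit domination above is self-contained within the present setup and slightly sharper.
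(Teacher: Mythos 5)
Your proof is correct. Note that the paper itself gives no argument for this lemma — it is simply imported by citation from Benoist--Kobayashi \cite{BK} — and your domination argument (restrict $f\in L^2(H'\ba G)$ to the fibres $H'\ba H$ of $H'\ba G\to H\ba G$, set $F(Hx)=\|f^x\|_{L^2(H'\ba H)}$, and use the unimodular tower formula plus Cauchy--Schwarz to get $|\langle g.f,f\rangle_{L^2(H'\ba G)}|\le\langle g.F,F\rangle_{L^2(H\ba G)}$, then conclude via the almost-$L^2$ criterion of Theorem \ref{chh} and polarization) is exactly the standard Herz-majoration proof of that cited proposition, so you have in effect reproduced the source's argument rather than found a new route. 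The points you flag as routine (well-definedness and measurability of $F$, the quotient-in-stages integration formula for unimodular $H'<H<G$, and the extension from $C_c$ to nonnegative measurable functions) are indeed standard in the locally compact second countable setting, so there is no gap.
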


We show that temperedness is a closed condition both for the Chabauty topology and the algebraic topology (Theorems \ref{cl} and \ref{al}).
\begin{theorem}\label{cl}
 The Chabauty limit of a sequence of tempered discrete subgroups of $G$ is unimodular and
    tempered.
\end{theorem}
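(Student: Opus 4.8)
The plan is to combine the Chabauty-convergence results of Section~\ref{sec-mc} with the quantitative characterization of temperedness in Theorem~\ref{chh}(3). Let $\Gamma_n$ be a sequence of tempered discrete subgroups of $G$ converging in the Chabauty topology to a closed subgroup $H$. First I would invoke Proposition~\ref{ch} to conclude that $H$ is unimodular, so that the quasi-regular representation $L^2(H\ba G)$ and the notion of $G$-temperedness make sense for $H$. It then remains to show that $L^2(H\ba G)$ is tempered, and by Theorem~\ref{chh} it suffices to verify condition (3): for any $K$-finite unit vectors $v,w\in L^2(H\ba G)$ and any $g\in G$,
\[
|\langle v, g.w\rangle_{L^2(H\ba G)}|\le \bigl(\dim\langle K.v\rangle\cdot\dim\langle K.w\rangle\bigr)^{1/2}\,\Xi_G(g).
\]

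The key step is to pull this inequality back from each $\Gamma_n$ using Proposition~\ref{l1}. Given $K$-finite unit vectors $v,w\in L^2(H\ba G)$, Proposition~\ref{l1} produces sequences $v_n,w_n\in L^2(\Gamma_n\ba G)$ with $\langle v_n,g.w_n\rangle\to\langle v,g.w\rangle$ uniformly on compact subsets of $G$, with $\|v_n\|\to\|v\|=1$ and $\|w_n\|\to\|w\|=1$, and — crucially — with $\dim\langle K.v_n\rangle\le\dim\langle K.v\rangle$ and $\dim\langle K.w_n\rangle\le\dim\langle K.w\rangle$. Since each $\Gamma_n$ is tempered, Theorem~\ref{chh}(3) applied to the (not-quite-unit, but nonzero) vectors $v_n,w_n$ — after normalizing, or equivalently absorbing the norms — gives
\[
|\langle v_n, g.w_n\rangle_{L^2(\Gamma_n\ba G)}|\le \|v_n\|\,\|w_n\|\bigl(\dim\langle K.v_n\rangle\cdot\dim\langle K.w_n\rangle\bigr)^{1/2}\,\Xi_G(g).
\]
Letting $n\to\infty$, the left side converges to $|\langle v,g.w\rangle_{L^2(H\ba G)}|$, the norms converge to $1$, and the dimension bounds pass to the limit as inequalities, yielding precisely condition (3) for $L^2(H\ba G)$. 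Hence $L^2(H\ba G)$ is tempered.

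One technical point to handle carefully: $K$-finiteness of $v$ means $\langle K.v\rangle$ is finite-dimensional, and the vectors $v_n$ supplied by Proposition~\ref{l1} satisfy $\dim\langle K.v_n\rangle\le\dim\langle K.v\rangle<\infty$, so they are automatically $K$-finite and Theorem~\ref{chh}(3) genuinely applies to them. Also, if $v_n=0$ or $w_n=0$ for some $n$ the corresponding inequality is trivial, so we may harmlessly assume they are nonzero for large $n$ (which in any case follows from $\|v_n\|\to 1$). I expect the main obstacle to be purely bookkeeping: making sure the dimension-monotonicity in Proposition~\ref{l1} is used correctly and that the limit of the right-hand side is $\Xi_G(g)$ times the product of the limiting dimensions rather than something larger, but since $\Xi_G(g)$ does not depend on $n$ and the dimensions only decrease, this is immediate. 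No continuity of $\Xi_G$ beyond its fixed value at $g$ is needed since $g$ is held fixed throughout.
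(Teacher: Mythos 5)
Your proposal is correct and follows essentially the same route as the paper: both rely on Proposition~\ref{l1} to transport $K$-finite matrix coefficients from $L^2(\Gamma_n\ba G)$ to $L^2(H\ba G)$ and then invoke the Cowling--Haggerup--Howe criterion (Theorem~\ref{chh}(3)); the paper merely phrases the argument as a proof by contradiction, whereas you verify the Harish-Chandra bound directly by passing the inequality to the limit, which is the same content in contrapositive form.
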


\begin{proof}
Suppose that $\Gamma_n$ is a sequence of tempered discrete subgroups converging to a closed
subgroup $H$ in the Chabauty topology. We have $H$ unimodular by Proposition \ref{ch}.
We claim that $L^2(H\ba G)$ is tempered. Suppose not.  
By Theorem \ref{chh},
there exist $K$-finite unit vectors $v,w\in L^2(H\ba G)$ and $g\in G$ such that
\begin{equation}\label{eq2}
|\langle v,  g. w\rangle_{L^2(H\ba G)} |>\Xi(g) \dim\langle K.v\rangle^{1/2}\dim\langle K.w\rangle^{1/2}.
\end{equation}
By Proposition \ref{app}, there exist vectors
$v_n,w_n\in L^2(\Gamma_n\bs G)$ such that  $\|v_n\|\to \|v\|, \|w_n\|\to\|w\|$ as $n\to\infty$, $\dim \langle K.v_n\rangle\leq \dim \langle K.v\rangle, \dim \langle K.w_n\rangle \leq \dim \langle K.w\rangle$, and
 $\langle v, g. w \rangle_{L^2(H\ba G)} =\lim_{n\to \infty}
 \langle v_n, g. w_n \rangle_{L^2(\Gamma_n\ba G)} $. We can normalize $v_n,w_n$ to be unit vectors without affecting the above properties.
 We deduce that for all $n$ large enough,
 $$|\langle v_n, g. w_n \rangle_{L^2(\Gamma_n\ba G)}| >\Xi(g) \dim\langle K.v_n\rangle^{1/2}\dim\langle K.w_n\rangle^{1/2}.$$
 This is a contradiction since $L^2(\Gamma_n\bs G)$ is tempered.

Alternatively, one can use \cite[Theorem 4.2]{Fell} that $L^2(H\ba G)$ is weakly contained in the direct sum $\bigoplus_{n=1}^\infty L^2(\Ga_n\bs G)$. If $\Ga_n$ were all tempered, we would deduce that $L^2(H\ba G)$ is weakly contained in $\bigoplus_{n=1}^\infty L^2(G)$, hence in $L^2(G)$, which then implies that $H$ is tempered.
\end{proof}

\begin{Def}
We say that a sequence of discrete subgroups $\Ga_i$ of $G$  converges to a discrete subgroup $\Ga$ algebraically if
there exists a sequence of isomorphisms $$\chi_i:\Ga \to \Ga_i$$ such that for all $\ga\in \Ga$,
$\chi_i(\ga)$ converges to $\ga $ as $i\to \infty$. In other words, $\chi_i$ converges to the natural inclusion $\op{id}_\Ga$ in $\Hom(\Ga, G)$,  where the space $\Hom(\Ga, G)$ is endowed with the topology of pointwise convergence. In this case, $\Ga$ is called the algebraic limit of $\Ga_i$
\end{Def}

\begin{remark}
We refer the readers to \cite{BS} for a comparison of
 algebraic and Chabauty convergence; in particular, each notion faily to imply the other in general.
\end{remark} 
\begin{theorem} \label{al} The algebraic limit of a sequence of tempered discrete subgroups of $G$ is tempered. 

    \end{theorem}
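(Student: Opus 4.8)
The plan is to reduce the algebraic case to the Chabauty case already handled in Theorem \ref{cl}. Suppose $\Ga_i$ is a sequence of tempered discrete subgroups of $G$ converging algebraically to a discrete subgroup $\Ga$, witnessed by isomorphisms $\chi_i:\Ga\to\Ga_i$ with $\chi_i(\ga)\to\ga$ for every $\ga\in\Ga$. The difficulty is that algebraic convergence does not directly give Chabauty convergence: a priori the groups $\Ga_i$ could develop extra elements accumulating somewhere away from $\Ga$, so that the Chabauty limit $H$ of some subsequence strictly contains $\Ga$. So I cannot simply say ``$\Ga_i\to\Ga$ in Chabauty'' and quote Theorem \ref{cl}.

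The key step is therefore the following: by compactness of the Chabauty space $\mathfrak C_G$, after passing to a subsequence we may assume $\Ga_i$ converges in the Chabauty topology to some closed subgroup $H<G$. I claim $\Ga\subseteq H$: indeed for each $\ga\in\Ga$ we have $\chi_i(\ga)\in\Ga_i$ and $\chi_i(\ga)\to\ga$, so $\ga\in H$ by the definition of Chabauty convergence. By Theorem \ref{cl}, $H$ is unimodular and $G$-tempered. Now $\Ga$ is a discrete, hence closed, unimodular subgroup of $H$ (being discrete it carries counting measure, which is bi-invariant). By Lemma \ref{bk} applied to the pair $\Ga<H<G$, since $H$ is $G$-tempered, so is $\Ga$. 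This proves $L^2(\Ga\ba G)$ is tempered. Since the argument applies to every subsequence of the original sequence, no passage to a subsequence is actually lost: the conclusion ``$\Ga$ is tempered'' is a property of $\Ga$ alone and does not depend on the subsequence chosen.

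I expect the only real subtlety to be the verification that $\Ga\subseteq H$, which is immediate from the definitions, and the observation that Lemma \ref{bk} applies with $\Ga$ playing the role of the subgroup $H'$ and $H$ the role of the larger group — here one just needs $\Ga$ discrete (hence a closed unimodular subgroup of $H$), which is part of the hypothesis. Everything else is bookkeeping with the compactness of $\mathfrak C_G$ and the two cited results. No new analysis of matrix coefficients is needed beyond what went into Theorem \ref{cl}.
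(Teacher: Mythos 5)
Your proof is correct and follows essentially the same route as the paper: pass to a Chabauty-convergent subsequence, note $\Ga<H$ for the Chabauty limit $H$, apply Theorem \ref{cl} to get $H$ unimodular and tempered, and conclude via Lemma \ref{bk} since $\Ga$ is a closed unimodular (discrete) subgroup of $H$. The extra remarks about the subsequence and the verification $\Ga\subseteq H$ are fine and match the paper's argument.
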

\begin{proof}
Let $\Ga_i$ be a sequence of tempered discrete subgroups of $G$ which converges to a discrete subgroup $\Ga$ algebraically.
By passing to a subsequence if necessary, we may assume that $\Ga_i$ converges to a closed subgroup $H$  in the Chabauty topology. Since $\Ga$ is the algebraic limit of $\Ga_i$, we have
$$\Ga< H.$$

By Theorem \ref{cl}, $H$ is unimodular and tempered. Since any closed unimodular subgroup of a tempered subgroup is tempered by Lemma \ref{bk}, $\Ga$ is tempered as desired.
\end{proof}

The following is an equivalent formulation of Theorem \ref{al}:
\begin{theorem}\label{al2}
    If a discrete subgroup $\Ga$ is a non-tempered subgroup of $G$, there exists an open neighborhood $\cal O$ of $\op{id}_{\Ga}$
    in $\Hom(\Ga, G)$ such that for any $\sigma\in \cal O$, $\sigma(\Ga)$ is non-tempered.
\end{theorem}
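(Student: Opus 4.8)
The plan is to show that Theorems \ref{al} and \ref{al2} are contrapositives of one another, modulo the routine observation that a convergent sequence of homomorphisms whose limit is injective (on a fixed generating set, and hence as a group homomorphism after passing to a subsequence) records precisely an algebraic convergence of discrete subgroups. So I would argue by contradiction: suppose $\Ga$ is a non-tempered discrete subgroup of $G$, but no open neighborhood $\cal O$ of $\op{id}_\Ga$ in $\Hom(\Ga,G)$ consists entirely of homomorphisms $\sigma$ with $\sigma(\Ga)$ non-tempered. Fixing a countable neighborhood basis $\cal O_1\supset \cal O_2\supset\cdots$ of $\op{id}_\Ga$ (available since $\Hom(\Ga,G)$ with the topology of pointwise convergence embeds in a countable product of copies of the metrizable space $G$, as $\Ga$ is countable), we obtain $\sigma_i\in\cal O_i$ such that $\sigma_i(\Ga)$ is tempered. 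By construction $\sigma_i\to\op{id}_\Ga$ pointwise.

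The next step is to promote this to an algebraic convergence of subgroups, i.e. to arrange that $\sigma_i$ is injective with discrete image for all large $i$, so that $\chi_i:=\sigma_i$ exhibits $\sigma_i(\Ga)$ as a sequence of discrete subgroups converging algebraically to $\Ga$. Injectivity of $\sigma_i$ for large $i$ is immediate when $\Ga$ is finitely generated: if $F\subset\Ga$ is a finite generating set and $\Ga$ is torsion-free (which we may assume here, since in our application $\Ga$ is a closed hyperbolic manifold group) then the kernel of $\sigma_i$ is trivial once $\sigma_i$ is close enough to $\op{id}_\Ga$ on a large enough finite set — this is the content of the Malcev-type local rigidity argument; more directly, a tempered $\sigma_i(\Ga)$ is in particular discrete, and any non-injective $\sigma_i$ with $\sigma_i(\Ga)$ tempered would still give, via Lemma \ref{bk} and Theorem \ref{al} applied to $\Ga/\ker\sigma_i\cong\sigma_i(\Ga)$, a contradiction only after we know $\ker\sigma_i$ is eventually trivial. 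To keep the argument clean I would instead invoke the general principle — already used implicitly in the proof of Theorem \ref{al} — that it suffices for the maps $\chi_i$ to be \emph{any} homomorphisms $\Ga\to G$ converging to $\op{id}_\Ga$ pointwise with discrete image $\chi_i(\Ga)$; temperedness of $\chi_i(\Ga)$ together with $\chi_i\to\op{id}_\Ga$ forces, by the Chabauty-limit argument of Theorem \ref{cl} followed by Lemma \ref{bk}, temperedness of $\Ga$. Concretely: pass to a subsequence so that $\chi_i(\Ga)=\sigma_i(\Ga)$ converges in the Chabauty topology to a closed subgroup $H<G$; pointwise convergence $\sigma_i\to\op{id}_\Ga$ gives $\Ga<H$; by Theorem \ref{cl}, $H$ is unimodular and tempered; by Lemma \ref{bk}, the closed unimodular subgroup $\Ga<H$ is tempered.

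This contradicts the hypothesis that $\Ga$ is non-tempered, and completes the proof. I expect the only real subtlety to be the passage from a pointwise-convergent sequence of homomorphisms to an honest \emph{algebraic} limit in the sense of the definition preceding Theorem \ref{al} — that is, ensuring the $\sigma_i$ are eventually isomorphisms onto their images with discrete image. In the generality of arbitrary countable $\Ga$ this is not automatic, but as noted one can sidestep it entirely: the proof of Theorem \ref{al} only uses the two facts (i) the images are discrete and tempered, and (ii) the structure maps converge pointwise to $\op{id}_\Ga$, whence $\Ga$ sits inside the Chabauty limit. I would therefore phrase Theorem \ref{al2} as the direct contrapositive of this slightly stronger version of Theorem \ref{al}, so that no local-rigidity input is needed and the only ingredients are Proposition \ref{ch}, Theorem \ref{cl}, and Lemma \ref{bk}.
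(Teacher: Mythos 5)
Your argument is correct and is essentially the paper's: the paper obtains Theorem \ref{al2} simply as the contrapositive reformulation of Theorem \ref{al}, whose proof is exactly the route you take — pass to a Chabauty-convergent subsequence, apply Theorem \ref{cl} to get a unimodular tempered limit $H$ containing $\Ga$, and conclude with Lemma \ref{bk}. Your observation that one never needs the approximating homomorphisms to be injective (only pointwise convergence to $\op{id}_\Ga$ together with discreteness and temperedness of the images, since these alone force $\Ga<H$) is a correct and slightly more careful reading of what the argument actually uses than the paper's terse ``equivalent formulation'' claim.
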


\section{Growth indicator of a lattice of $\SO(n,1)$ as a subgroup of $\SO(n,2)$}\label{sec4}
Let $G=\SO^\circ (n,2)$ for $n\ge 2$.
Consider 
the quadratic form $$Q(x_1, \cdots, x_{n+2}) = x_1x_{n+2}+ x_2x_{n+1} +\sum_{i=3}^n x_i^2.$$
We realize $G$  as the identity component of the following special orthogonal group 
$$\op{SO}(Q)=\{g\in \SL_{n+2}(\br):
Q(g X)= Q(X) \text{ for all $X\in \br^{n+2}$} \}.$$ 
Consider  the diagonal subgroup 
$$A=\{ \text{diag}(e^{t_1}, e^{t_2}, 1\cdots, 1, e^{-t_2}, e^{-t_1}):t_1, t_2\in \br \},$$ 
which is a maximal real split torus of $G$. 
We denote by $\frak g$  the Lie algebra of $G$ and set
$$\fa= \{v=\text{diag}(v_1, v_2, 0, \cdots,0, -v_2, -v_1):v_1, v_2\in \br\}=\log A. $$
For simplicity, we write 
$v=(v_1, v_2, 0, \cdots,0, -v_2, -v_1)$ for an element of $\fa$.
Choose a positive Weyl chamber
\be\label{aplus} \fa^+= \{v=(v_1, v_2, 0, \cdots,0, -v_2, -v_1):v_1\ge v_2\ge 0\}.\ee 

Since $G$ is invariant under the Cartan involution $g\mapsto {g^{-T}}$,
$$K=\{g\in G: g g^T=e\}=G\cap \SO(n+2) $$ is a maximal compact subgroup of $G$
and we have the Cartan decomposition
$G=K (\exp \fa^+)  K$. We denote by $\mu:G\to \fa^+$ the Cartan projection of $G$.

We then have two simple (restricted) roots $\alpha_1$ and $\alpha_2$ for $(\frak g, \fa)$ given by
 $$\alpha_1(v)=v_1-v_2\quad\text{ and }\quad \alpha_2 (v)= v_2\quad\text{ for all $v\in \fa$}. $$

By explicit computation of $\frak g$, we can see that 
the set of all positive roots of $\frak g$ is given by 
$$\Sigma^+(\frak g, \frak a)=\{\alpha_1, \alpha_2, \alpha_1+\alpha_2,\alpha_1+2\alpha_2\}.$$

The direct sum of root subspaces is given by 
$$\Biggl\{\begin{pmatrix} 0 & x & Y_1 & z & 0\\
                  0 & 0 & Y_2 & 0  &-z\\
                    & &       & -Y_1^t & -Y_2^t \\
                    & &       & 0 &-x \\
                    & &        & 0 &0
                    \end{pmatrix} :x, z\in \br, Y_1, Y_2\in \br^{n-2} \Biggr\}$$
                 where the subspaces corresponding to $x\in \br$, $Y_1\in \br^{n-2}$, $Y_2\in \br^{n-2}$, and
                 $z\in \br$ are root subspaces for $\alpha_1$, $\alpha_1+\alpha_2$, $\alpha_2$
and $\alpha_1+2\alpha_2$ respectively. Hence
 the multiplicities are given by 
$$m(\alpha_1)=m(\alpha_1+2\alpha_2)=1$$ and
$$m(\alpha_1+\alpha_2)=m(\alpha_2)=n-2.$$
Since
$ (\alpha_1+\alpha_2)(v)=v_1$ and $(\alpha_1+2\alpha_2)(v)= v_1+v_2$,
the half sum of all positive roots counted with multiplicity is
 \be\label{rrr} \rho (v) = \sum_{\alpha\in \Sigma^+} m(\alpha) \alpha(v) =
 \frac{1}{2} \left( nv_1+ (n-2)v_2\right) \quad\text{for $v\in \fa^+$}.\ee 

\subsection*{Bound on growth indicator for general non-lattice subgroups} 
Recall the definition of the growth indicator of a discrete subgroup of $G$ from \eqref{gi}. For any discrete subgroup $\Ga$ of $G$, 
the growth indicator $\psi_{\Ga}$ is concave and upper-semicontinuous \cite[I.1 Th\'eor\`eme]{Q2}.  Since $\dim \fa^+=2$, it follows that
$\psi_{\Ga}$ is continuous on the limit cone $\cal L_{\Ga}$. 

The quantitative Kazhdan's property $(T)$ of the group $G$ obtained in \cite{Oh} yields the following explicit upper bound:
\begin{prop}\label{lo}
 For any non-lattice discrete subgroup $\Ga$ of $G$,
 we have
 $$\psi_{\Ga}(v)\le (n-1) v_1+(n-2)v_2 \quad\text{for all $v\in \fa^+$}. $$
\end{prop}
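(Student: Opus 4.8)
The plan is to deduce Proposition \ref{lo} from the effective property $(T)$ bound \eqref{ttt}, which here reads $\psi_{\Ga}\le (2-\eta_G)\rho$ on $\fa^+$ for some $\eta_G>0$, but in a form that is \emph{uniform} and \emph{explicit} rather than merely qualitative. So the first thing I would do is recall the precise quantitative statement available for $G=\SO^\circ(n,2)$ from \cite{Oh}: there one obtains a sharp integrability exponent $p=p_G$ for matrix coefficients of $L^2(\Ga\bs G)$ attached to any non-lattice discrete $\Ga$, and an explicit value of $p_G$ in terms of $n$. The point is that \cite{Oh}'s bound for $\SO(n,2)$ gives almost $L^{2}$-integrability \emph{failing by a controlled amount}: concretely the matrix coefficients are almost $L^{p_G}$-integrable with $p_G$ computable, and $p_G$ is exactly what governs the decay rate $\tfrac{2}{p_G}\rho$ that bounds $\psi_\Ga$.

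Next I would translate the integrability exponent into a bound on the growth indicator. The standard mechanism (see \cite{Q3}, \cite{LO}) is: if every matrix coefficient of $L^2(\Ga\bs G)$ is almost $L^p$-integrable, then $\psi_\Ga \le \tfrac{2}{p}\rho$ pointwise on $\fa^+$; equivalently, writing the decay of the spherical function, the exponential growth rate of $\Ga$ in direction $v$ is at most $\tfrac{2}{p}\rho(v)$. Applying this with the explicit $p=p_G$ for $\SO^\circ(n,2)$, and substituting the formula \eqref{rrr}, namely $\rho(v)=\tfrac12(nv_1+(n-2)v_2)$, I expect $\tfrac{2}{p_G}\rho(v)$ to simplify to exactly $(n-1)v_1+(n-2)v_2$. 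In other words the claimed bound is $\tfrac{2}{p_G}\rho$ with $\tfrac{2}{p_G}=\tfrac{2(n-1)}{n}$, i.e. $p_G=\tfrac{n}{n-1}$, and I would check that this matches the value in \cite{Oh}. That arithmetic is the routine part; the substitution $\rho(v)=\tfrac12(nv_1+(n-2)v_2)$ gives $\tfrac{2(n-1)}{n}\cdot\tfrac12(nv_1+(n-2)v_2)=(n-1)v_1+\tfrac{(n-1)(n-2)}{n}v_2$, which is \emph{not} literally $(n-1)v_1+(n-2)v_2$ unless one is slightly careful, so I would actually want a direction-sensitive version of \cite{Oh} rather than just the uniform exponent — this is the subtle point below.

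The main obstacle, then, is getting the bound in the correct \emph{direction-dependent} form. A single uniform exponent $p_G$ yields the weaker bound $\psi_\Ga(v)\le \tfrac{2}{p_G}\rho(v)$, which along the $\alpha_2=0$ wall gives $(n-1)v_1$ (matching) but along $v_1=v_2$ gives $\tfrac{2(n-1)}{n}\cdot(n-1)v_1$, whereas the Proposition asserts $(2n-3)v_1$ there — and $\tfrac{2(n-1)^2}{n}$ versus $2n-3$ need comparing. I would therefore use the refinement in \cite{Oh} (and \cite{LO}) in which the property $(T)$ bound is stated per Weyl-chamber direction via the decay of $K$-invariant matrix coefficients against the spherical function, i.e. one controls $\int_{\mu(\ga)\in\cal C}$-sums by $\Xi_G$-type estimates whose exponent depends on $\cal C$. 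Concretely, the linear form $v\mapsto (n-1)v_1+(n-2)v_2$ should arise as $\rho(v)$ minus the contribution of the root directions that remain ``spectrally gapped'' for non-lattices — I would identify which positive roots (here $\alpha_2$ and $\alpha_1+\alpha_2$, of multiplicity $n-2$) contribute the gap and subtract $\tfrac12(\alpha_1+\alpha_2)(v)=\tfrac12 v_1$ worth times the relevant multiplicity, etc. The clean way to present this is: invoke \cite[...]{Oh} directly in the form ``for any non-lattice discrete $\Ga<\SO^\circ(n,2)$ and any $v\in\fa^+$, $\psi_\Ga(v)\le (n-1)v_1+(n-2)v_2$'', cite it as the input, and note that the right-hand side equals $2\rho(v)-v_1-2v_2 = 2\rho(v) - (\alpha_1+2\alpha_2)(v)$, which exhibits it as $2\rho$ minus the highest root, the natural shape of an effective property $(T)$ estimate.
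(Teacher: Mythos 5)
Your route is the paper's route in spirit (Oh's effective property $(T)$, packaged as a direction-dependent bound on $\psi_\Ga$), and your instinct that a single uniform integrability exponent cannot literally give the stated linear form is correct. But the step where you identify what that direction-dependent bound actually is — which is the whole content of the proposition — goes wrong. The input used is \cite[Theorem 7.1]{LO}: for any non-lattice discrete $\Ga<G$ one has $\psi_\Ga\le 2\rho-\Theta$ on $\fa^+$, where $\Theta$ is the \emph{half-sum of the roots in a maximal strongly orthogonal system} of $\Sigma^+(\fg,\fa)$. For $G=\SO^\circ(n,2)$ such a system is $\{\alpha_1,\alpha_1+2\alpha_2\}$, so $\Theta(v)=\tfrac12\bigl((v_1-v_2)+(v_1+v_2)\bigr)=v_1$ and $(2\rho-\Theta)(v)=(n-1)v_1+(n-2)v_2$, which is exactly the claim. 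Your ``clean presentation'' instead declares the bound to be $2\rho$ minus the highest root, via the identity $(n-1)v_1+(n-2)v_2=2\rho(v)-v_1-2v_2=2\rho(v)-(\alpha_1+2\alpha_2)(v)$. This is false twice over: $(\alpha_1+2\alpha_2)(v)=v_1+v_2$, not $v_1+2v_2$; and $2\rho(v)-(v_1+v_2)=(n-1)v_1+(n-3)v_2$ while $2\rho(v)-v_1-2v_2=(n-1)v_1+(n-4)v_2$, neither of which is the right-hand side of the proposition. The correct identity is simply $(n-1)v_1+(n-2)v_2=2\rho(v)-v_1=2\rho(v)-\Theta(v)$. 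Likewise, your sketch of ``subtracting the contribution of $\alpha_2$ and $\alpha_1+\alpha_2$ with multiplicity $n-2$'' is not how the effective bound is organized: the deficit is governed by a strongly orthogonal system, not by a multiplicity-weighted list of gapped roots.

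Beyond the arithmetic, your fallback — to ``invoke \cite{Oh} directly in the form: for any non-lattice discrete $\Ga<\SO^\circ(n,2)$, $\psi_\Ga(v)\le(n-1)v_1+(n-2)v_2$, and cite it as the input'' — is circular as a proof: \cite{Oh} provides pointwise decay of matrix coefficients (essentially $e^{-\Theta(\mu(g))}$ up to subexponential factors), and the translation of that decay into a growth-indicator bound is precisely \cite[Theorem 7.1]{LO}; the only thing left to verify, and what the proof actually consists of, is the computation of $\Theta$ for $\SO^\circ(n,2)$, which your write-up never carries out correctly. Note also that the uniform scalar bound you first hoped for, $\psi_\Ga\le\tfrac{2(n-1)}{n}\rho$ for all non-lattice discrete subgroups, is not what effective property $(T)$ delivers: the best scalar consequence of $2\rho-\Theta$ is the weaker $\psi_\Ga\le\tfrac{2n-3}{n-1}\rho$ (the ratio is maximized along $v_1=v_2$), so the direction-dependent statement is genuinely needed, as you suspected, and it must be quoted in the $2\rho-\Theta$ form.
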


\begin{proof}
By \cite[Theorem 7.1]{LO}, 
we have
$$\psi_{\Ga}(v) \le (2\rho - \Theta)(v) \text{ for all $v\in \fa^+$} $$
where
$\Theta$ is the half sum of all roots in a maximal strongly orthogonal system of $\Sigma^+(\frak g, \frak a)$.
Since $\{\alpha_1, \alpha_1+2\alpha_2\}$ is a maximal strongly orthogonal system, we have
$$\Theta(v)= v_1\quad\text{for all $v\in \fa^+$}.$$
Therefore $$(2\rho -\Theta )(v)= (n-1)v_1+(n-2)v_2,$$ proving the claim. \end{proof}

\subsection*{Growth indicator for discrete subgroups of $G$ that are
lattices of $H$}

Let $H=\SO^\circ (n,1)$.
The restriction of the quadratic form $Q$ to the hyperplane $V:=\{x_1=x_{n+2}\}$ yields a quadratic form
$Q_0=Q|_V$ in $(n+1)$ variables. We identify 
$$H=\SO^\circ(n,1)=\{g\in G: g(V)=V\} =\SO^\circ (Q_0).$$

Since $H$ is invariant under the Cartan involution $g\mapsto {g^{-T}}$,
 the intersection $K\cap H$ is a maximal compact subgroup of  $H$.
Denoting by $\frak h$ the Lie algebra of $H$, we have
$$\frak h \cap \frak a =\{ \text{diag}(0, v_2, 0 , \cdots, 0,  -v_2, 0) : v_2\in \br \} .$$
Note that  the Cartan projection $\mu(H)$  is equal to $\fa^+\cap \ker \alpha_2$:
$$\mu(H)=\{v=(v_1, 0, \cdots,0,  -v_1):v_1 \ge 0\}.$$

To see that, apply the Weyl element switching the first two rows (and hence the last two rows) to $\frak h\cap \fa $, resulting in $\{(v_2,0,\ldots,0,-v_2) : v_2\in \br\}=\ker \alpha_2$.

\begin{prop}\label{pg} Let $\Ga<G$ be a discrete subgroup such that
 $\Ga$ is a lattice of $H$.  Then 
 \begin{equation}\psi_\Ga (v)=\begin{cases} (n-1) v_1 &\text{ for $v= (v_1, 0, \cdots, 0, -v_1)$, $v_1\ge 0$}\\ -\infty &\text{for $v\notin \mu(H)$} \end{cases} \end{equation}
In other words, \be\label{upper} \psi_\Ga \le \frac{2(n-1)}{n}\rho \quad\text{ on $\fa^+$}\ee 
with the equality on $\mu(H)$.
\end{prop}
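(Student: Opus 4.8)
The plan is to reduce the a priori two-dimensional function $\psi_\Ga$ to a one-dimensional computation: first locate the Cartan projection $\mu(\Ga)$ inside $\fa^+$, and then identify the resulting exponential growth rate with the critical exponent of the lattice $\Ga$ acting on $\bH^n$.

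First I would pin down $\mu(\Ga)$. Since the Cartan involution $g\mapsto g^{-T}$ preserves $H$, the subgroup $K_H:=K\cap H$ is a maximal compact subgroup of $H$, and $\fa_H:=\frak h\cap\frak a$ — one-dimensional, as $\op{rank}H=1$ — is a maximal abelian subspace of $\frak h\cap\frak p$; hence $H=K_H\exp(\fa_H^+)K_H$ is a Cartan decomposition of $H$ for either choice of chamber $\fa_H^+\subset\fa_H$. A small subtlety is that $\fa_H\cap\fa^+=\{0\}$, so $\fa_H$ is not adapted to $\fa^+$; taking $w$ to be the Weyl element of $G$ swapping the first two coordinates of $\fa$ (and accordingly the last two), one has $w\fa_H^+\subset\fa^+$ for the appropriate choice of $\fa_H^+$. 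Conjugating the Cartan decomposition of $H$ by a representative of $w$ in $N_K(A)$ then gives $\mu(h)=w\,\mu_H(h)$ for all $h\in H$, where $\mu_H\colon H\to\fa_H^+$ is the Cartan projection of $H$. Consequently $\mu(\Ga)\subset w\fa_H^+=\fa^+\cap\ker\alpha_2=\mu(H)$, so the limit cone $\cal L_\Ga$ lies in the ray $\mu(H)$ and $\psi_\Ga\equiv-\infty$ on $\fa^+\setminus\mu(H)$.

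Next I would compute $\psi_\Ga$ on $\mu(H)$. Fix $v=(v_1,0,\ldots,0,-v_1)\in\mu(H)$ with $v_1>0$. Any open cone $\mathcal C\subset\fa^+$ containing $v$ contains the whole open ray through $v$, and since $\mu(\Ga)$ lies in that ray the counting function in \eqref{gi} is independent of $\mathcal C$; choosing a $W$-invariant norm on $\fa$ (permissible, as $\psi_\Ga$ is norm-independent) and using $\mu=w\circ\mu_H$ on $\Ga$, I obtain
$$\psi_\Ga(v)=\|v\|\cdot\limsup_{T\to\infty}\frac{\log\#\{\ga\in\Ga:\|\mu_H(\ga)\|\le T\}}{T}=\psi^H_\Ga(w^{-1}v),$$
where $\psi^H_\Ga$ is the growth indicator of $\Ga$ viewed as a discrete subgroup of $H$. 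As $\Ga$ is a lattice in $H$, Quint's theorem \cite{Q2} yields $\psi^H_\Ga=2\rho_H$ on $\fa_H^+$. Now $\frak h=\frak{so}(n,1)$ has a single positive restricted root $\alpha_H$, of multiplicity $n-1$ (the dimension of the unipotent radical of a minimal parabolic of $H$), and inspecting the $\fa_H$-action gives $\alpha_H(0,s,0,\ldots,0,-s,0)=s$; hence $2\rho_H(0,s,0,\ldots,0,-s,0)=(n-1)s$. Since $w^{-1}v=(0,v_1,0,\ldots,0,-v_1,0)$, this gives $\psi_\Ga(v)=(n-1)v_1$. Together with $\psi_\Ga\equiv-\infty$ off $\mu(H)$ this is the stated formula, and the inequality $\psi_\Ga\le\tfrac{2(n-1)}{n}\rho$ with equality on $\mu(H)$ follows because $\rho$ equals $\tfrac n2 v_1$ on $\mu(H)$ while $\rho\ge0$ on $\fa^+$.

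Alternatively one can bypass Quint's theorem: Proposition \ref{lo} applies since $\Ga$, being a lattice in the proper subgroup $H$, is a non-lattice discrete subgroup of $G$, and it gives $\psi_\Ga(v)\le(n-1)v_1+(n-2)v_2$, hence $\psi_\Ga(v)\le(n-1)v_1$ on $\mu(H)$; for the matching lower bound one needs only $\delta_\Ga\ge n-1$, i.e. divergence of $\sum_{\ga\in\Ga}e^{-(n-1)d_{\bH^n}(o,\ga o)}$, which holds for every lattice by comparison with the divergent integral $\int_{\bH^n}e^{-(n-1)d(o,x)}\,dx$, together with the normalization $d_{\bH^n}(o,\ga o)=\tfrac{1}{\sqrt2}\|\mu_H(\ga)\|$. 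I expect the main obstacle to be exactly this bookkeeping — the compatibility of the two Cartan projections via the correct choice of chamber $\fa_H^+$ and Weyl element $w$ (forced since $\fa_H\cap\fa^+=\{0\}$), and, in the hands-on route, the constant relating $\|\mu_H(\cdot)\|$ to hyperbolic displacement, which is read off from a short geodesic computation in the hyperboloid model of $Q_0$ (giving $d_{\bH^n}(o,\exp(tX_0)o)=t$ for $X_0=(0,1,0,\ldots,0,-1,0)$) — rather than any conceptual difficulty.
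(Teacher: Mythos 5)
Your argument is correct and follows the same skeleton as the paper's proof: use the Weyl element $w$ swapping the first two coordinates to identify $\mu|_H=w\circ\mu_H$ (the paper records exactly this computation just before the proposition), deduce that $\mathcal L_\Ga$ lies in the ray $\mu(H)=\fa^+\cap\ker\alpha_2$ so that $\psi_\Ga\equiv-\infty$ off that ray, and then evaluate the growth rate along the ray and compare with $\rho(v_1,0,\dots,0,-v_1)=\tfrac n2 v_1$. The only substantive difference is the input for the rate $n-1$: the paper normalizes the norm so that $\|\mu(h)\|=d_{\bH^n}(ho,o)$ and quotes the lattice point count $\#\{\ga\in\Ga: d_{\bH^n}(\ga o,o)<T\}\sim Ce^{(n-1)T}$ (\cite{DRS}, \cite{EM}), whereas you identify $\psi_\Ga(v)=\psi^H_\Ga(w^{-1}v)$ and invoke Quint's equality $\psi^H_\Ga=2\rho_H$ for lattices \cite{Q2} together with $2\rho_H=(n-1)\alpha_H$ from the root multiplicity of $\mathfrak{so}(n,1)$; these are equivalent in content, and your formulation is precisely the generalization the paper itself states in the remark following Proposition \ref{pg} (a lattice $\Gamma<H$ has $\psi_\Gamma=2\rho_H$ on $\log(H\cap A^+)$ and $-\infty$ elsewhere). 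One small caution about your ``alternative'' route: the divergence of $\sum_{\ga}e^{-(n-1)d_{\bH^n}(o,\ga o)}$ by direct comparison with $\int_{\bH^n}e^{-(n-1)d(o,x)}\,dx$ is immediate only when the fundamental domain is bounded, i.e.\ for uniform lattices; since the proposition allows arbitrary lattices of $H$, for the non-uniform case you would still need the counting asymptotics of \cite{DRS}, \cite{EM} (as the paper cites) or a divergence-type argument, so your main route via \cite{Q2} is the one that covers the statement in full.
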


\begin{proof}
    Since $\Ga$ is a lattice of $H$,
the limit cone of $\Ga$ satisfies
$$\mathcal L_{\Ga} =\mu(H)=\fa^+\cap \ker\alpha_2.$$
Hence for $v\notin \mu(H)$, $\psi_\Ga(v)=-\infty$.
Let $\|\cdot\|$ denote the norm on $\fa$ induced from the Riemannian metric on $G/K$.  Since $H/(H\cap K)\subset G/K$ is an isometric embedding,
we have that for all $h\in H$,
$\|\mu(h)\|$ is equal to the Riemannian distance $d_{H/(H\cap K)} (ho, o)$ in $H/(H\cap K)$.
Since $\psi_\Ga$ is independent of the choice of a norm, we may assume that for all $h\in H$,
$\|\mu(h)\|$ is equal to the hyperbolic distance $d_{\bH^n}(ho, o)$ by identifying $H/(H\cap K)\simeq \bH^{n}$, which is equivalent to  $\| (v_1, 0, \cdots, 0, -v_1)\|=|v_1|$.
Since $\Ga<H$ is a lattice,
we have
$$\#\{\ga\in \Ga: d_{\bH^n}(\ga o, o)<T\}\sim C e^{(n-1)T}\quad\text{ as $T\to \infty$}$$
(cf. \cite{DRS}, \cite{EM}).
Hence for $v= (v_1, 0, \cdots, 0, -v_1)$ with $v_1\ge 0$,
$$\psi_\Ga (v)=\|v\| \limsup_{T\to \infty} \frac{\log \#\{\ga\in \Ga: \| \mu(\ga)\| \le T\}}{T}=(n-1)v_1 .$$

Since $\rho (v_1, 0, \cdots, 0, -v_1) = \frac{n}{2} v_1$ by \eqref{rho}, the claim follows.
\end{proof}

\begin{remark}
 Note that the upper bound  \eqref{upper} already follows from Proposition \ref{lo} once we know that $\mathcal L_\Ga\subset \fa_{\alpha_1}$.
 The above proposition shows that that upper bound is optimal for the case at hand.
\end{remark}

\begin{remark} We remark that Proposition \ref{pg} holds in a more general setting:
let $G$ be a connected semisimple real algebraic subgroup with Cartan decomposition $G=KA^+K$ and $H<G$  a connected reductive real algebraic subgroup such that
$H=(K\cap H)(A^+\cap H)(K\cap H)$. Let $\Gamma$ be a lattice of $H$. Then $\psi_\Gamma (v)=2\rho_H (v) $ if $v\in \log (H\cap A^+)$ and $-\infty$ otherwise, where $2\rho_H$ is the sum of all positive roots of $(\op{Lie}(H),\log (H\cap A^+))$. 
\end{remark}

We recall the following criterion on the temperedness of $L^2(\Ga\ba G)$.
\begin{theorem} (\cite{EO}, \cite[Theorem 5.1]{LWW}) \label{lww} For any discrete subgroup $\Ga$ of a connected semisimple real algebraic group $G$, we have
   $$\text{ $\psi_{\Ga} \le \rho$ if and only if  $\Ga$ is a tempered subgroup of $G$}.$$
   Moreover, if $\psi_{\Ga} \le  (1+\eta) \rho$, then $L^2(\Ga\ba G)$ is almost $L^p$
   for $p\le \frac{2}{1-\eta}$.
\end{theorem}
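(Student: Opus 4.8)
The plan is to prove both implications through a single dictionary relating the matrix coefficients of the quasi-regular representation $L^2(\Ga\bs G)$ to the orbital counting of $\Ga$ along Cartan directions. Fix a symmetric relatively compact identity neighbourhood $B_0\subset G$ small enough that $\Ga\cap B_0^4=\{e\}$, and put $f=\mathbbm{1}_{\Ga\bs\Ga B_0}\in L^2(\Ga\bs G)$. Unfolding the defining integral gives
$$\phi_f(g):=\langle f,g.f\rangle_{L^2(\Ga\bs G)}=\sum_{\ga\in\Ga}\vol\bigl(B_0\cap\ga B_0 g^{-1}\bigr),$$
where a summand vanishes unless $\ga\in B_0 g B_0$. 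The local estimate at the heart of the dictionary is that, when nonzero, $\vol(B_0\cap\ga B_0 g^{-1})$ is comparable, up to a constant depending only on $B_0$, to $e^{-2\rho(\mu(g))}$: conjugation by $\exp(-\mu(g))$ contracts the root space $\fg_\alpha$ by $e^{-\alpha(\mu(g))}$ for each $\alpha\in\Sigma^+$, and the product of these contractions equals $e^{-2\rho(\mu(g))}$ by \eqref{rho}. Combined with the Cartan integration formula $\int_G F\,dg=\int_{\fa^+}\bigl(\int_{K\times K}F(k_1\exp(v)k_2)\,dk_1\,dk_2\bigr)J(v)\,dv$, with $J(v)=\prod_{\alpha\in\Sigma^+}\sinh(\alpha(v))^{m(\alpha)}$ satisfying $J(v)\le e^{2\rho(v)}$ and $J(v)\asymp e^{2\rho(v)}$ on $\e$-regular directions, this reduces temperedness of $L^2(\Ga\bs G)$ — which by \cite{CHH} amounts to $L^{2+\e}$-integrability of the diagonal matrix coefficients of a dense set of vectors, for every $\e>0$ — to integrability estimates governed by how fast $\Ga$ accumulates in Cartan sectors, i.e.\ by the growth indicator $\psi_\Ga$ from \eqref{gi}.

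For "$\Ga$ tempered $\Rightarrow\psi_\Ga\le\rho$", temperedness gives $\phi_f\in L^{2+\e}(G)$, and averaging over $K\times K$ (which does not increase the $L^{2+\e}$ norm) the same holds for the bi-$K$-invariant function $\Phi_f(g)=\int_{K\times K}\phi_f(k_1 g k_2)\,dk_1\,dk_2$. The lower half of the dictionary is robust: for every $\ga$ whose Cartan data are well aligned with $\mu(g)$ the corresponding summand is $\gtrsim e^{-2\rho(\mu(g))}$, so on an $\e$-regular cone $\mathcal C$ one has $\Phi_f(\exp v)\gtrsim e^{-2\rho(v)}\,n_\Ga(v)$ where $n_\Ga(v):=\#\{\ga\in\Ga:\|\mu(\ga)-v\|\le c\}$; since $J(v)\asymp e^{2\rho(v)}$ there, finiteness of $\int_G\Phi_f^{2+\e}$ forces $\int_{\mathcal C}n_\Ga(v)^{2+\e}e^{-2(1+\e)\rho(v)}\,dv<\infty$. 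Now fix a regular direction $u$ in the interior of the limit cone with $\psi_\Ga(u)>0$; by \eqref{gi} there are radii $t_j\to\infty$ along which a box-counting argument produces points $w_j\in\mathcal C$ with $\|w_j\|\asymp t_j$ and $n_\Ga(w_j)\gtrsim e^{(\psi_\Ga(u)-\delta)t_j}$, and inserting these into the last integral forces $(\psi_\Ga(u)-\delta)(2+\e)\le 2(1+\e)\rho(u)$. Letting $\delta\to 0$ and then $\e\to 0$ yields $\psi_\Ga(u)\le\rho(u)$, which by homogeneity, concavity and upper semicontinuity of $\psi_\Ga$ extends to all of $\fa^+$.

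For the converse, "$\psi_\Ga\le\rho\Rightarrow\Ga$ tempered", one needs the \emph{upper} half of the dictionary, $\phi_f(g)\lesssim e^{-2\rho(\mu(g))}\,N_\Ga(g)$ with $N_\Ga(g):=\#\{\ga\in\Ga:\ga\in B_0 g B_0\}$, together with the bound $N_\Ga(g)\lesssim_\delta e^{(1+\delta)\rho(\mu(g))}$; granting this, and using $J(v)\le e^{2\rho(v)}$,
$$\int_G|\phi_f|^{2+\e}\,dg\ \lesssim\ \int_{\fa^+}e^{[(2+\e)(1+\delta)-2(1+\e)]\rho(v)}\,dv,$$
whose exponent is negative for $\delta$ small, so the integral converges because $\rho$ is a positive linear form on the cone $\fa^+$; hence $\phi_f\in L^{2+\e}(G)$ for every cutoff $f$, and $L^2(\Ga\bs G)$ is tempered by \cite{CHH}. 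The difficulty — and the step I expect to be the main obstacle — is exactly the bound on $N_\Ga(g)$. Unlike $n_\Ga$, the plaque $B_0 g B_0$ has volume $\asymp e^{2\rho(\mu(g))}$ and is exponentially distorted, stretched along the directions that play the role of the unstable horospherical foliation for $\exp(\mu(g))$; a crude volume count therefore yields only the useless bound $N_\Ga(g)\lesssim e^{2\rho(\mu(g))}$, sharp when $\Ga$ is a lattice. To get $N_\Ga(g)\lesssim_\delta e^{(\psi_\Ga(\mu(g)/\|\mu(g)\|)+\delta)\|\mu(g)\|}$ uniformly in the direction of $\mu(g)$ — in particular up to the walls of $\fa^+$ and the boundary of the limit cone, where both $J$ and the distortion degenerate — one must exploit that $\Ga$ is far from being a lattice, extracting the estimate from the growth indicator itself: via Quint's concavity and semicontinuity theory together with a compactness argument over the unit sphere, and, in the general non-Anosov case, via equidistribution and mixing input on $\Ga\bs G$. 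This is the technical core, carried out in \cite{EO} for Borel-Anosov $\Ga$ and in \cite{LWW} in general.

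Finally, the quantitative refinement is pure exponent bookkeeping: running the converse argument with $L^p$ in place of $L^{2+\e}$ and with the hypothesis $\psi_\Ga\le(1+\eta)\rho$ replaces the exponent in the last display by $p(1+\eta)(1+\delta)-2(p-1)$, which is negative for $\delta$ small as soon as $p>\tfrac{2}{1-\eta}$; hence the diagonal matrix coefficients of a dense set of vectors are $L^p$-integrable for every such $p$, so $L^2(\Ga\bs G)$ is almost $L^{2/(1-\eta)}$, as claimed.
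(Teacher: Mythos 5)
You should first note that the paper itself gives no proof of Theorem \ref{lww}: it is imported wholesale from \cite{EO} and \cite{LWW}, so your sketch can only be judged against those sources and against the way the theorem is actually used here. Your dictionary (matrix coefficients of indicators of small balls versus orbital counting, with the local estimate $\vol(B_0\cap\ga B_0 g^{-1})\asymp e^{-2\rho(\mu(g))}$ and the Cartan Jacobian) is the standard skeleton, and the implication $\psi_\Ga\le\rho\Rightarrow$ tempered is essentially sound as you set it up. In fact the step you single out as the main obstacle is softer than you fear: the bound $N_\Ga(g)\lesssim_\delta e^{(1+\delta)\rho(\mu(g))}$ follows from $\psi_\Ga\le\rho$ alone, by covering the unit sphere of $\fa^+$ with finitely many narrow cones, applying the limsup in \eqref{gi} to each cone, and using that $\rho\ge c\|\cdot\|$ on $\fa^+$ to absorb the error uniformly in the direction; no equidistribution or mixing input is needed for that upper-bound half.

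The genuine gap is in the converse, tempered $\Rightarrow\psi_\Ga\le\rho$, and it sits exactly where this paper needs the theorem. Your lower bound $\Phi_f(\exp v)\gtrsim e^{-2\rho(v)}\,n_\Ga(v)$ is justified only on $\e$-regular cones, and the closing step ``by homogeneity, concavity and upper semicontinuity of $\psi_\Ga$ extends to all of $\fa^+$'' is not a valid argument: if $\cal L_\Ga$ is contained in a wall of $\fa^+$, then $\psi_\Ga\equiv-\infty$ on every regular direction, so there is nothing to extend, while $\psi_\Ga$ may well exceed $\rho$ on the wall. This is precisely the situation the paper cares about: for a lattice $\Ga<\SO^\circ(n,1)$ viewed inside $\SO^\circ(n,2)$ one has $\cal L_\Ga=\fa^+\cap\ker\alpha_2$ (Proposition \ref{pg}), so your argument as written does not yield Corollary \ref{ntt}, i.e.\ it misses the only use of the ``only if'' half made in this paper. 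The repair requires running the dictionary up to the walls, replacing $e^{2\rho(v)}$ by the true Jacobian $\prod_{\alpha\in\Sigma^+}\sinh^{m(\alpha)}\alpha(v)$ (equivalently, using two-sided Harish-Chandra-type estimates valid in singular directions), which is the technical content of \cite{EO} and \cite{LWW}. A second, smaller gap: the ``moreover'' clause asserts $L^{p+\e}$-integrability of \emph{all} matrix coefficients, whereas your bookkeeping gives it only for the dense family of compactly supported vectors; for $p\neq 2$ passing from a dense family to all vectors is not automatic (unlike the $p=2$ case, where \cite{CHH} and weak containment do the job) and needs an extra positivity/monotone-approximation or $K$-finite pointwise-bound argument.
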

That $L^2(\Ga\ba G)$ is almost $L^p$ means that every matrix coefficient of the quasi-regular representation $L^2(\Ga\ba G)$ is $L^{p+\e}$-integrable for any $\e>0$.
By Theorem \ref{chh}, a discrete subgroup $\Ga$ is $G$-tempered if and only if  $L^2(\Ga\ba G)$ is almost $L^2$.

Since $\psi_\Ga=\frac{2(n-1)}{n}\rho $ on $\mu(H)$ by Proposition \ref{pg},
we obtain the following examples of non-tempered subgroups of $G$:
\begin{cor}\label{ntt}
   Let $\Ga$ be a lattice of $H=\SO^\circ (n,1)$, considered as a subgroup of $G=\SO^\circ(n,2)$.
   Then $$\text{  $\Ga$ is $G$-tempered if and only if $n=2$.}$$
   
   Moreover, for each $n\ge 2$,
   $$\text{ $L^2(\Ga\ba G)$  is almost $L^n$.}$$
\end{cor}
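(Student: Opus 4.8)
The plan is to deduce the corollary purely from the growth-indicator computation of Proposition \ref{pg} together with the temperedness criterion of Theorem \ref{lww}, the only genuinely new ingredient being an arithmetic comparison of the constant $\tfrac{2(n-1)}{n}$ with $1$. The relevant observation is that for $n\ge 2$ one has $\tfrac{2(n-1)}{n}\le 1$ exactly when $n=2$, with $\tfrac{2(n-1)}{n}=1$ at $n=2$ and $\tfrac{2(n-1)}{n}>1$ for all $n\ge 3$.

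First I would invoke Proposition \ref{pg} to record that $\psi_\Ga\le\tfrac{2(n-1)}{n}\rho$ on $\fa^+$, with equality along the ray $\mu(H)=\{(v_1,0,\dots,0,-v_1):v_1\ge 0\}$. If $n=2$, this reads $\psi_\Ga\le\rho$, so Theorem \ref{lww} immediately gives that $\Ga$ is $G$-tempered. If $n\ge 3$, I would pick any $v\in\mu(H)$ with $v_1>0$; since $\rho(v)=\tfrac n2 v_1>0$ there, we get $\psi_\Ga(v)=\tfrac{2(n-1)}{n}\rho(v)>\rho(v)$, hence $\psi_\Ga\not\le\rho$ on $\fa^+$, and Theorem \ref{lww} shows $\Ga$ is not $G$-tempered. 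This establishes the stated dichotomy.

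For the final assertion I would set $\eta:=\tfrac{n-2}{n}\ge 0$, so that $1+\eta=\tfrac{2(n-1)}{n}$ and hence $\psi_\Ga\le(1+\eta)\rho$ by Proposition \ref{pg}; the quantitative part of Theorem \ref{lww} then yields that $L^2(\Ga\ba G)$ is almost $L^p$ for $p\le\tfrac{2}{1-\eta}=n$, in particular almost $L^n$ (and for $n=2$ this is again almost $L^2$, consistently with temperedness). I do not expect any real obstacle here: all the substance lies in Proposition \ref{pg} — the $\bH^n$-orbit counting for lattices of $H$ transported through the isometric embedding $H/(H\cap K)\hookrightarrow G/K$ — and in the already-cited Theorem \ref{lww}. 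The only points requiring slight care are matching the integrability exponent $n$ with $\eta=\tfrac{n-2}{n}$, and noting that $\rho$ does not vanish on $\mu(H)\setminus\{0\}$, which is what makes the inequality $\psi_\Ga(v)>\rho(v)$ strict when $n\ge 3$.
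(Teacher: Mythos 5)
Your proposal is correct and follows essentially the same route as the paper: the corollary is deduced directly from Proposition \ref{pg} (equality $\psi_\Ga=\tfrac{2(n-1)}{n}\rho$ on $\mu(H)$, and that bound everywhere) together with Theorem \ref{lww}, with the almost-$L^n$ statement coming from the quantitative part via $\eta=\tfrac{n-2}{n}$, exactly as you compute. No gaps.
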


\section{Deformations and non-tempered Zariski-dense examples}\label{def}
Let  $G=\SO^\circ (n,2)$ and $H=\SO^\circ (n,1)=\op{Isom}^+(\bH^n)$.
Let $\Ga$ be a torsion-free uniform lattice of $H$ such that $M=\Ga\ba \bH^n$
is a closed hyperbolic $n$-manifold with a properly embedded totally geodesic hyperplane $S$. 

\begin{Rmk} For any $n\ge 2$, such a $\Ga$ exists, for instance, 
consider a quadratic form $Q_0(x_1, \cdots, x_{n+1})=\sum_{i=1}^n x_i^2 -\sqrt d x_{n+1}^2$
 for a square-free integer $d$. 
 Let $\Ga <\SO(Q_0)\cap \SL_{n+1} (\mathbb Z \sqrt d) $ be a torsion -free subgroup of finite index. Then $\Ga$ 
  is a uniform lattice of $\SO(Q_0)$ \cite{BH}.
 Considering $\SL_{n}$ as a subgroup of $\SL_{n+1}$ embedded as the lower diagonal block subgroup, the intersection
 $\Delta=\G \cap \SL_n$ is a uniform lattice of $
 \SO(Q_0)\cap \SL_n\simeq \SO(n-1,1)$.
Now $M=\Ga \ba \bH^n$ is a closed hyperbolic $n$-manifold with a properly embedded geodesic hyperplane $S=\Delta\ba \bH^{n-1} $.
\end{Rmk}

We may assume that $\Ga\cap \SO(n-1,1)=\Delta$ is a uniform lattice
of $\SO(n-1,1)$ by replacing $\Ga$ by a conjugate if necessary.

 We briefly recall the bending construction of Johnson-Millson \cite{JM}. Their bending was constructed with the ambient group $\op{SL}_{n+2}(\br)$. We use a modification by Kassel \cite[Sec. 6]{Ka} where the bending was done inside $G=\SO^\circ(n,2)$. 
There exists a one-parameter subgroup $a_t\in G$ which centralizes $\SO(n-1,1).$
If $S$ is separating, i.e., $M-S$ is the disjoint union of two connected components
$M_1$ and $M_2$, then $\Ga=\Gamma_1*_{\Delta}\Ga_2$. 
Consider the homomorphism $\sigma_t:\Ga\to G$ given by \begin{equation*}
 \sigma_t(\ga) =\begin{cases} \ga &\text{ for $\ga\in \Gamma_1$}  \\ a_t \ga a_{-t} &\text{ for $\ga\in \Ga_2$.   }\end{cases}
\end{equation*}
Since $a_t$ commutes with $\Delta$, $\sigma_t$ is well-defined.
If $S$ does not separate $M$,
then $\Ga$ is an HNN extension of $\Delta$, and we have a homomorphism $\sigma_t$ defined similarly (cf. \cite[Sec 6.3]{Ka}).

The following Zariski density and discreteness results were obtained in \cite{Ka} and
\cite{G} respectively:
\begin{prop}\label{Ka}  For all sufficiently small $t\ne 0$,  $\sigma_t(\Ga)$ is discrete and
Zariski-dense in $G=\SO^\circ(n,2)$.    
\end{prop}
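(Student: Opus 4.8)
The plan is to reduce the statement to the two separate assertions already in the literature, namely Zariski density of $\sigma_t(\Ga)$ from Kassel \cite{Ka} and discreteness of $\sigma_t(\Ga)$ from the relevant bending/Anosov stability result, and to organize the argument around the fact that $\Ga$ is a convex cocompact (indeed Anosov) subgroup of $H$ whose inclusion into $G$ is undistorted in the $\alpha_1$-direction. First I would record that $\op{id}_\Ga\colon \Ga\hookrightarrow H<G$, being the inclusion of a uniform lattice of a rank-one group, is $\{\alpha_1\}$-Anosov in $G$: its limit set lies in the single $G/P_{\alpha_1}$-boundary component coming from $\bH^n$, and the Anosov property is stable under small deformations in $\op{Hom}(\Ga, G)$ (Guichard--Wienhard / Labourie; this is precisely where \cite{G} enters). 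Hence for $t$ small, $\sigma_t$ stays in the open set of $\{\alpha_1\}$-Anosov representations, which are in particular faithful with discrete image. That gives discreteness.

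For Zariski density I would invoke Kassel's computation directly: the bending cocycle is built from the one-parameter group $a_t$ centralizing $\SO(n-1,1)$, and \cite[Sec.~6]{Ka} shows that for $t\neq 0$ small the Zariski closure of $\sigma_t(\Ga)$ cannot be contained in any proper algebraic subgroup — the key point being that $\sigma_t(\Ga)$ contains both $\Ga_1$ (fixing the hyperplane $V$, hence lying in a copy of $\SO(n,1)$) and $a_t\Ga_2 a_{-t}$ (lying in a genuinely different, $a_t$-conjugated copy), and no proper algebraic subgroup of $\SO^\circ(n,2)$ can contain two such copies of $\SO(n,1)$ in "generic relative position" unless $t=0$. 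In the non-separating (HNN) case the same conclusion holds by the stable-letter argument of \cite[Sec.~6.3]{Ka}. I would present this as a direct citation rather than reproving it, since the bending construction is lifted verbatim from \cite{Ka}.

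The one genuine subtlety — and the place I expect the main obstacle — is reconciling the two ranges of validity: discreteness holds for $t$ in some neighborhood of $0$ coming from openness of Anosov representations, while Zariski density holds for $t\neq 0$ in possibly a different (though also punctured) neighborhood coming from the algebraicity argument. The fix is routine: intersect the two neighborhoods and remove $t=0$, so "for all sufficiently small $t\neq 0$" means $0<|t|<t_0$ for $t_0$ the minimum of the two thresholds. One should also check that $\sigma_t$ is genuinely a new representation for $t\neq 0$ — i.e.\ that the bending is nontrivial — which again follows from Zariski density (the undeformed $\sigma_0(\Ga)=\Ga$ lies in $\SO(n,1)$, hence is not Zariski-dense in $\SO^\circ(n,2)$), so no $\sigma_t$ with $t\neq 0$ can be conjugate to $\sigma_0$. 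With these bookkeeping points settled, the proposition is simply the conjunction of the cited results, and the proof is two or three lines of citation plus the intersection-of-neighborhoods remark.
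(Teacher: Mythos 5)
Your proposal is correct and takes essentially the same route as the paper, which offers no independent argument for Proposition \ref{Ka} and simply cites Kassel \cite{Ka} for Zariski density and Guichard \cite{G} for discreteness; your alternative justification of discreteness via openness of $\{\alpha_1\}$-Anosov representations is exactly how the paper later handles the same point in Theorem \ref{fin}. One small attribution nitpick: the Anosov openness you invoke is due to \cite{GW}, \cite{KLP} (the paper's Lemma on $\Ga$ being $\{\alpha_1\}$-Anosov plus those references), whereas \cite{G} is the stability result for quasi-isometrically embedded subgroups under small deformation — either source yields the discreteness you need, so this does not affect correctness.
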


We now give a proof of Theorem \ref{m1}:
\begin{theorem}\label{m} Let $n\ge 3$. 
  For all sufficiently small $t\ne 0$, the subgroup $\sigma_t(\Ga)$ is a non-tempered, Zariski-dense and discrete subgroup of $G=\SO^\circ(n,2)$.    
\end{theorem}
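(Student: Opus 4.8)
The plan is to combine the algebraic-convergence result of Section~\ref{sec-tempcl} with the explicit non-temperedness of lattices of $H=\SO^\circ(n,1)$ inside $G=\SO^\circ(n,2)$ established in Corollary~\ref{ntt}, together with the discreteness and Zariski density of the bent representations from Proposition~\ref{Ka}. The key observation is that the bending deformation $\sigma_t$ is a genuine deformation of the inclusion $\op{id}_\Ga\colon\Ga\hookrightarrow G$: as $t\to 0$, each $\sigma_t(\ga)\to\ga$, so $\sigma_t\to\op{id}_\Ga$ in $\Hom(\Ga,G)$. Since $n\ge 3$, Corollary~\ref{ntt} tells us that $\Ga$, viewed as a subgroup of $G$ via $\op{id}_\Ga$, is \emph{non-tempered}. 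Theorem~\ref{al2} then says non-temperedness is an open condition in $\Hom(\Ga,G)$, so there is an open neighborhood $\cal O$ of $\op{id}_\Ga$ consisting of homomorphisms with non-tempered image. Hence $\sigma_t(\Ga)$ is non-tempered for all sufficiently small $t$.

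Concretely, I would organize the proof in three short steps. \textbf{Step I:} Observe that $\sigma_t\to \op{id}_\Ga$ in $\Hom(\Ga,G)$ as $t\to 0$, which is immediate from the definition of $\sigma_t$ and the continuity of $t\mapsto a_t$ (the identity holds on the generators coming from $\Gamma_1$ for all $t$, and on generators from $\Gamma_2$ we have $a_t\ga a_{-t}\to\ga$). \textbf{Step II:} Invoke Corollary~\ref{ntt}: for $n\ge 3$, the uniform lattice $\Ga<H$ is a non-tempered subgroup of $G$. \textbf{Step III:} Apply Theorem~\ref{al2} to the non-tempered discrete subgroup $\Ga<G$ to obtain an open neighborhood $\cal O\subset\Hom(\Ga,G)$ of $\op{id}_\Ga$ on which every homomorphism has non-tempered image; combined with Step~I this gives non-temperedness of $\sigma_t(\Ga)$ for all small $t$. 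Finally, intersect with the (nonempty, open-in-$t$) set of parameters from Proposition~\ref{Ka} for which $\sigma_t(\Ga)$ is discrete and Zariski-dense, and take any sufficiently small $t\ne 0$ in this intersection.

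The only genuine subtlety is making sure Theorem~\ref{al2} applies: that theorem is stated for a \emph{non-tempered discrete subgroup} $\Ga$ of $G$, and here $\Ga$ is indeed discrete in $G$ (it is even a uniform lattice of the closed subgroup $H<G$, hence discrete in $G$) and non-tempered by Corollary~\ref{ntt}. So no additional hypotheses need checking. I expect no real obstacle in this proof — the work has all been done in Sections~\ref{sec-tempcl} and~\ref{sec4}, and here we are just assembling the pieces; the main point to state carefully is merely the convergence $\sigma_t\to\op{id}_\Ga$, which identifies the bending family as a path through the neighborhood supplied by Theorem~\ref{al2}. (One should note that the case distinction in the bending construction — $S$ separating versus not — does not affect the argument, since in both cases $\sigma_t$ is defined so as to equal the identity on one free/vertex factor and conjugation by $a_t$ on the other, and in both cases $\sigma_t\to\op{id}_\Ga$.)
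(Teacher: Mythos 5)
Your proposal is correct and follows exactly the paper's own argument: non-temperedness of $\Ga$ in $G$ from Corollary~\ref{ntt}, openness of non-temperedness in $\Hom(\Ga,G)$ from Theorem~\ref{al2} applied along the path $\sigma_t\to\op{id}_\Ga$, and discreteness plus Zariski density from Proposition~\ref{Ka}. Your extra remarks (verifying $\sigma_t\to\op{id}_\Ga$ and that the separating versus non-separating case makes no difference) are just explicit spellings-out of steps the paper leaves implicit.
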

\begin{proof}
 The subgroup $\Ga$ is a non-tempered subgroup of $G$ for $n\ge 3$ by Corollary \ref{ntt}. 
 Hence the claim follows from Theorem \ref{al2} and Proposition \ref{Ka}.
\end{proof}

\section{Anosov representations and non-temperedness}
In this section, we prove a stronger result than Theorem \ref{m1} using the theory of Anosov representations. We keep the notations for $G=\SO^\circ(n,2)$, $H=\SO^\circ(n,1)$, $\frak a$, $\alpha_1, \alpha_2$ etc  from Section \ref{sec4}.
Let $\Ga$ be a torsion-free uniform lattice of $H$ such that the closed hyperbolic manifold
$\Ga\ba \bH^n$ has a properly embedded totally geodesic hyperplane as in Section \ref{def}.

\begin{Def}\label{anosov} For a non-empty subset $\theta\subset \Pi= \{\alpha_1, \alpha_2\}$,
a finitely generated subgroup $\G_0$ of $G$ is called $\theta$-Anosov if there exists $C>0$ such that
for all $\ga\in \Ga_0$ and $\alpha\in \theta$, we have
 $$\alpha (\mu(\ga))\ge C^{-1} |\ga| -C $$
 where $|\ga|$ denotes the word length of $\ga$ with respect to a fixed finite generating subset of $\Ga_0$.
A $\Pi$-Anosov subgroup is called Borel-Anosov. \end{Def} 
\begin{lem}
The subgroup $\Ga$ is an $\{\alpha_1\}$-Anosov subgroup of $G$.
\end{lem}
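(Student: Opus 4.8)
The plan is to show that $\Ga$, viewed inside $G = \SO^\circ(n,2)$ via the embedding $H = \SO^\circ(n,1) \hookrightarrow G$ fixed in Section \ref{sec4}, satisfies the Anosov inequality for the single root $\alpha_1$. The key observation is that $\Ga$ is a uniform lattice in $H$, hence in particular a cocompact — and therefore Anosov — subgroup \emph{of $H$}, and we just need to transfer this to a statement about the Cartan projection $\mu = \mu_G$ of the ambient group $G$ and the root $\alpha_1$ of $(\fg, \fa)$.

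First I would recall from Section \ref{sec4} that the embedding $H/(H\cap K) \hookrightarrow G/K$ is totally geodesic and isometric, that $\fh \cap \fa$ is the line $\{(0, v_2, 0, \dots, 0, -v_2, 0)\}$, and that applying the Weyl element swapping the first two coordinates identifies $\mu_G(H)$ with $\fa^+ \cap \ker\alpha_2 = \{(v_1, 0, \dots, 0, -v_1) : v_1 \ge 0\}$. Concretely: for $h \in H$ there is a unique $v_1(h) \ge 0$ with $\mu_G(h) = (v_1(h), 0, \dots, 0, -v_1(h))$, and by the isometric-embedding remark in the proof of Proposition \ref{pg}, $v_1(h) = d_{\bH^n}(ho, o)$ equals (a fixed multiple of) the hyperbolic translation distance, i.e. $v_1(h)$ agrees with $\|\mu_H(h)\|$ under the identification $H/(H\cap K) \simeq \bH^n$. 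In particular $\alpha_1(\mu_G(h)) = v_1(h) - 0 = v_1(h)$, so for $\ga \in \Ga$ we get $\alpha_1(\mu_G(\ga)) = d_{\bH^n}(\ga o, o)$ up to a fixed positive scalar.

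Now I would invoke the Milnor–Švarc lemma: since $\Ga$ acts cocompactly and properly discontinuously by isometries on $\bH^n$, the orbit map $\ga \mapsto \ga o$ is a quasi-isometry from $(\Ga, |\cdot|)$ (word metric) to $(\bH^n, d_{\bH^n})$. Hence there exist $C' > 0$ with $d_{\bH^n}(\ga o, o) \ge C'^{-1}|\ga| - C'$ for all $\ga \in \Ga$. Combining with the previous paragraph yields $\alpha_1(\mu_G(\ga)) \ge C^{-1}|\ga| - C$ for a suitable $C > 0$, which is exactly Definition \ref{anosov} for $\theta = \{\alpha_1\}$. (Alternatively, one can argue abstractly: $\Ga$ is Borel-Anosov in $H = \SO^\circ(n,1)$ since it is convex cocompact there, its unique simple restricted root $\beta$ of $H$ pulls back under $\fh \cap \fa \hookrightarrow \fa$ — after the Weyl correction — to a multiple of $\alpha_1|_{\mu_G(H)}$, and the word-length lower bound $\beta(\mu_H(\ga)) \gtrsim |\ga|$ then transfers verbatim.)

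I do not expect any serious obstacle here; the only point requiring a little care is the bookkeeping between $\mu_H$ and $\mu_G$ and the role of the Weyl element, which could superficially make it look as though $\mu_G(H)$ lies in $\ker\alpha_1$ rather than $\ker\alpha_2$ — but the explicit computation in Section \ref{sec4} pins this down, and once one knows $\mu_G(\Ga) \subset \fa^+ \cap \ker\alpha_2$ with $\alpha_1$ restricting to the (rescaled) hyperbolic distance, the Milnor–Švarc estimate finishes it immediately. It is also worth noting that $\Ga$ is \emph{not} $\{\alpha_2\}$-Anosov (indeed $\alpha_2(\mu_G(\ga)) \equiv 0$), so $\{\alpha_1\}$ is the precise Anosov datum, consistent with the statement of Theorem \ref{mm}(1).
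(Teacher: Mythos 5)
Your proposal is correct, and in fact it contains the paper's argument as the parenthetical ``alternative'' at the end: the paper's proof notes that $\beta_1:=-\alpha_1$ restricted to $\fh\cap\fa$ is the simple root of $(\fh,\fh\cap\fa)$ for the chosen chamber, cites the fact that a convex cocompact (here uniform lattice) subgroup of $H=\SO^\circ(n,1)$ is $\{\beta_1\}$-Anosov in $H$ \cite{GW} to get $\beta_1(\mu_H(\ga))\ge C^{-1}|\ga|-C$, and then transfers this via the identity $\beta_1\circ\mu_H=\alpha_1\circ\mu|_H$. Your primary route replaces the citation to \cite{GW} by a direct computation: using the isometric embedding $H/(H\cap K)\hookrightarrow G/K$ and the fact that $\mu(H)=\fa^+\cap\ker\alpha_2$ (the Weyl-element bookkeeping you flag, which the paper also records in Section \ref{sec4}), you identify $\alpha_1(\mu(\ga))$ with the hyperbolic displacement $d_{\bH^n}(\ga o,o)$ up to a fixed scalar, and then Milnor--\v{S}varc for the cocompact action of $\Ga$ on $\bH^n$ gives the linear lower bound in word length required by Definition \ref{anosov}. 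This is slightly more elementary and self-contained (in rank one, for a uniform lattice, the Anosov property in $H$ that the paper quotes is exactly this Milnor--\v{S}varc estimate), while the paper's phrasing is shorter and extends verbatim to the convex cocompact, non-cocompact case where the orbit map is only a quasi-isometric embedding rather than a quasi-isometry; both hinge on the same key identification of $\alpha_1\circ\mu|_H$ with the (rescaled) rank-one Cartan projection of $H$. Your side remark that $\Ga$ is not $\{\alpha_2\}$-Anosov, since $\alpha_2\circ\mu$ vanishes on $H$, is also correct.
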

\begin{proof} 
Note that $\beta_1:=-\alpha_1$ restricted to $\frak h\cap \fa$ is a simple root of $(\frak h,\frak h\cap \fa) $ with respect to the choice of a positive Weyl chamber
$(\frak h\cap \frak a)^+=\{v=(0, v_2, 0, \cdots,0, -v_2, 0): v_2\ge 0\}  $.
Since $\Ga$ is a uniform lattice of $H$, it is in particular a convex cocompact subgroup of $H$, and hence a $\{\beta_1\}$-Anosov subgroup of $H$ \cite{GW}. Therefore
there exists $C\ge 1$ such that for all $\ga\in \Ga$,
$$\beta_1 (\mu_H(\ga))\ge C^{-1} |\ga| -C$$ where $\mu_H$ denotes the Cartan projection map of $H$.
    Since $$\beta_1\circ \mu_H =\alpha_1 \circ \mu|_H ,$$ it follows that
     $\alpha_1 (\mu(\ga))\ge C^{-1} |\ga| -C$ for all $\ga\in \Ga$. This proves the claim.
\end{proof}

\begin{theorem} \label{fin} Let $n\ge 3$, and $G= \SO^\circ(n,2)$.
    There exists a non-empty open subset $\cal O$ of $\op{Hom}(\Ga, G)$ such that for any $\sigma\in \cal O$, we have
\begin{enumerate}
\item $\sigma$ is injective and discrete;
    \item $\sigma(\Ga)$ is a Zariski-dense $\{\alpha_1\}$-Anosov subgroup of $G$;
    \item $\sigma(\Ga)$ is not $G$-tempered.
\end{enumerate}
\end{theorem}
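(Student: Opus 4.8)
The plan is to produce the open set $\cal O$ by combining the two ingredients already assembled in the paper: the openness of Anosov and Zariski-density conditions near the inclusion $\id_\Ga$, and the fact that non-temperedness is a closed condition on $\Hom(\Ga,G)$ (Theorem \ref{al2}). First I would recall that $\Ga$, viewed as a lattice of $H=\SO^\circ(n,1)\hookrightarrow G$, is $\{\alpha_1\}$-Anosov in $G$ by the lemma just proved, Zariski-dense in $H$ (hence with Zariski closure exactly $H$, a proper subgroup of $G$), and non-$G$-tempered by Corollary \ref{ntt} since $n\ge 3$. The point $\id_\Ga\in\Hom(\Ga,G)$ is therefore a common ``base point'' for properties (1)--(3) to be perturbed away from, except that at $\id_\Ga$ itself property (2) fails in its Zariski-density part.

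Next I would set up the open neighborhoods. The set of Anosov representations is open in $\Hom(\Ga,G)$ (Labourie, Guichard--Wienhard), and a $\theta$-Anosov representation is in particular injective with discrete image; this gives an open $\cal O_1\ni\id_\Ga$ on which (1) holds and $\sigma(\Ga)$ is $\{\alpha_1\}$-Anosov. By Theorem \ref{al2}, non-$G$-temperedness of $\Ga$ gives an open $\cal O_2\ni\id_\Ga$ on which $\sigma(\Ga)$ is non-$G$-tempered, so (3) holds on $\cal O_2$. It remains to arrange Zariski density. Here I would invoke the bending homomorphisms $\sigma_t$ of Section \ref{def}: by Proposition \ref{Ka}, for all sufficiently small $t\ne 0$ the representation $\sigma_t$ lies in $\cal O_1\cap\cal O_2$ (being a small perturbation of $\id_\Ga$) and $\sigma_t(\Ga)$ is Zariski-dense in $G$. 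Fix one such $t_0$; then $\sigma_{t_0}\in\cal O_1\cap\cal O_2$ and $\sigma_{t_0}$ is Zariski-dense.

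Finally, since Zariski density of the image is an open condition in $\Hom(\Ga,G)$ — the set of representations whose image is \emph{not} Zariski-dense is a countable union of closed subvarieties cut out by the proper algebraic subgroups containing a fixed finite generating set's worth of data, or more simply: Zariski closure of the image can only drop on a closed set — there is an open neighborhood $\cal O_3$ of $\sigma_{t_0}$ on which $\sigma(\Ga)$ is Zariski-dense. I would then take $\cal O$ to be a connected open neighborhood of $\sigma_{t_0}$ contained in $\cal O_1\cap\cal O_2\cap\cal O_3$; this is nonempty (it contains $\sigma_{t_0}$), open, and by construction every $\sigma\in\cal O$ satisfies (1), (2) and (3).

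\textbf{Main obstacle.} The only delicate point is the interplay between the two base points: non-temperedness (Theorem \ref{al2}) is controlled near $\id_\Ga$, while Zariski density is controlled near $\sigma_{t_0}$ but \emph{fails} at $\id_\Ga$. The argument works because $\cal O_2$ is a genuine open set around $\id_\Ga$ and the bending deformation $\sigma_t$ stays inside it for $t$ small, so one can find a single representation $\sigma_{t_0}$ that is simultaneously close enough to $\id_\Ga$ (for non-temperedness) and already Zariski-dense; one must be slightly careful to note that $\cal O_3$ may be taken small enough to remain inside $\cal O_2$. One should also confirm that the openness of the Zariski-density locus is legitimate in this setting — it follows, e.g., from the fact that for a fixed finitely generated $\Ga$ the condition ``the Zariski closure of $\sigma(\Ga)$ has dimension $<\dim G$'' is closed, since the Zariski closure is generated by the images of a finite generating set and its dimension is upper semicontinuous. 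No new estimates are needed; the theorem is essentially a packaging of Theorem \ref{al2}, the Anosov openness theorem, Proposition \ref{Ka}, and Corollary \ref{ntt}.
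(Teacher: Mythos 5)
Your proposal is correct and follows essentially the same route as the paper: one intersects the open set of Zariski-dense representations (\cite[Proposition 8.2]{AB}), the open set of $\{\alpha_1\}$-Anosov representations (\cite{GW}, \cite{KLP}, where torsion-freeness of $\Ga$ upgrades the finite kernel to injectivity), and the neighborhood of $\op{id}_\Ga$ furnished by Theorem \ref{al2} via Corollary \ref{ntt}, with non-emptiness supplied by the bending representations of Proposition \ref{Ka}. The one caveat is that your ad hoc justification of the openness of the Zariski-density locus (a countable union of closed sets need not be closed, and the dimension of the Zariski closure is not upper semicontinuous in $\sigma$ in the way you use it) is not rigorous as stated; this openness is a genuine theorem and should simply be cited as in the paper.
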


 By \cite[Proposition 8.2]{AB}, the set of Zariski-dense representations of $\Ga$ forms an open subset of $\Hom(\Ga, G)$, which we know is non-empty by Proposition \ref{Ka}. Moreover, all Anosov representations are discrete with finite kernel and the set of all $\{\alpha_1\}$-Anosov representations forms an open subset in $\Hom(\Ga, G)$ by (\cite{GW}, \cite{KLP}).
Since $\Ga$ is assumed to be torsion-free, Theorem \ref{fin} follows from Theorem \ref{al2} and non-temperedness of $\Ga$. 

In the rest of this section, we will give a different proof of Theorem \ref{fin}(3) using the continuity of limit cones under a small deformation of $\Ga$ and the Anosov property of $\Ga$.

For any discrete subgroup $\Ga_0$ of $G$ and
any linear form $\psi\in \fa^*$ such that $\psi>0$ on $\cal L_{\Ga_0}-\{0\}$, 
    denote by $$\delta_{\psi,\Ga_0}$$ the abscissa of convergence of the series
    $s\mapsto \sum_{\ga\in \Ga_0} e^{-s\psi(\mu(\ga))}$. This is well-defined and $0\le \delta_{\psi, \Ga_0}<\infty$. Since $\rho>0$ on $\fa^+-\{0\}$, $\delta_{\rho, \Ga_0}$
is well-defined for any discrete subgroup $\Ga_0<G$.
Theorem \ref{lww} can be reformulated as follows:
\begin{prop}
  For any discrete subgroup $\Ga_0$ of a connected semisimple real algebraic group $G_0$, we have
     $$\text{ $\delta_{\rho,\Ga_0}\le 1$ if and only if  $\Ga_0$ is $G_0$-tempered} .$$
\end{prop}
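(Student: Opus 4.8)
The plan is to show that the two growth invariants $\psi_{\Ga_0}\le\rho$ and $\delta_{\rho,\Ga_0}\le 1$ are equivalent, which by Theorem \ref{lww} will immediately give the claimed characterization of $G_0$-temperedness. First I would unwind the definition of $\delta_{\rho,\Ga_0}$ together with Quint's abstract comparison between the growth indicator and abscissae of convergence of Poincaré-type series: for a linear form $\psi\in\fa^*$ with $\psi>0$ on $\cal L_{\Ga_0}-\{0\}$, one has $\delta_{\psi,\Ga_0}\le 1$ precisely when $\psi\ge\psi_{\Ga_0}$ on $\fa^+$. This is the content of \cite[Th\'eor\`eme III.1.2]{Q2} (the duality between $\psi_{\Ga_0}$ and the growth of $\Ga_0$ measured against linear forms); applied with $\psi=\rho$ it says $\delta_{\rho,\Ga_0}\le 1$ iff $\rho\ge\psi_{\Ga_0}$ on all of $\fa^+$.

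Concretely, the argument I would write has two directions. For the direction $\psi_{\Ga_0}\le\rho\Rightarrow\delta_{\rho,\Ga_0}\le 1$: partition $\fa^+$ into finitely many small open cones $\cal C$, and on each estimate the count $\#\{\ga:\mu(\ga)\in\cal C,\ \|\mu(\ga)\|\le T\}$ by $e^{(\psi_{\Ga_0}(v_{\cal C})+\e)T}$ where $v_{\cal C}$ is a unit vector in $\cal C$, using \eqref{gi} and the compactness of the unit sphere in $\fa^+$; since $\rho(\mu(\ga))\ge(\rho(v_{\cal C})-\e)\|\mu(\ga)\|\ge(\psi_{\Ga_0}(v_{\cal C})-2\e)\|\mu(\ga)\|$ on $\cal C$ and $\psi_{\Ga_0}\le\rho$, the series $\sum e^{-s\rho(\mu(\ga))}$ converges for $s>1$, giving $\delta_{\rho,\Ga_0}\le 1$. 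For the converse $\delta_{\rho,\Ga_0}\le 1\Rightarrow\psi_{\Ga_0}\le\rho$: if $\psi_{\Ga_0}(v)>\rho(v)$ for some unit $v\in\cal L_{\Ga_0}$, then by \eqref{gi} there is a cone $\cal C\ni v$ and $\eta>0$ with $\#\{\ga:\mu(\ga)\in\cal C,\ \|\mu(\ga)\|\le T\}\gg e^{(\rho(v)+\eta)T/\|v\|\cdot\|v\|}$ along a sequence $T\to\infty$, actually $e^{(\psi_{\Ga_0}(v)-\e)T}$, while $\rho(\mu(\ga))\le(\rho(v)+\e)\|\mu(\ga)\|$ on a small enough $\cal C$; choosing $\e$ small compared to $\psi_{\Ga_0}(v)-\rho(v)$ forces $\sum e^{-s\rho(\mu(\ga))}=\infty$ for some $s>1$, contradicting $\delta_{\rho,\Ga_0}\le 1$.

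The main obstacle, and the only genuinely delicate point, is handling directions in $\cal L_{\Ga_0}$ that lie on the boundary walls of $\fa^+$, where $\alpha_i(v)=0$ and linear forms can degenerate; there the naive cone decomposition needs the comparison $\rho(v')\ge(\rho(v)-\e)\|v'\|$ to hold uniformly for $v'$ in a one-sided neighborhood of $v$ inside $\fa^+$, which is fine since $\rho$ is a fixed linear form, but one must be careful that the cones $\cal C$ used are open cones in $\fa^+$ and that $\psi_{\Ga_0}$ is only upper semicontinuous in general — here, though, $\dim\fa=2$ so $\psi_{\Ga_0}$ is continuous on $\cal L_{\Ga_0}$ as already noted after Proposition \ref{lo}, which removes this difficulty. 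Once the equivalence $\psi_{\Ga_0}\le\rho\iff\delta_{\rho,\Ga_0}\le 1$ is in place, Theorem \ref{lww} closes the proof. In fact the cleanest writeup simply cites Quint's duality theorem \cite{Q2} for the equivalence and then invokes Theorem \ref{lww}, so the proposition is little more than a restatement; I would present it that way rather than reproving the cone estimates.
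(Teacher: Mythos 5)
Your proposal is correct and takes essentially the same route as the paper: the paper simply cites the tent property \cite[Theorem 2.5]{KMO} (that $\psi_{\Ga_0}\le \delta_{\rho,\Ga_0}\,\rho$ on $\fa^+$ with equality at some nonzero $v$) in place of Quint's convergence/divergence criterion, and then invokes Theorem \ref{lww} exactly as you do. The only caveat is that your $\dim\fa=2$ continuity remark is unavailable (and unnecessary) for a general $G_0$; the definition \eqref{gi} together with the citation of Quint already suffices, as in your preferred "clean" writeup.
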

\begin{proof}
    By \cite[Theorem 2.5]{KMO}, we have $$\psi_{\Ga_0}\le  \delta_{\rho,\Ga_0} \cdot \rho$$ and
    $\psi_{\Ga_0}(v)=  \delta_{\rho,\Ga_0} \cdot \rho(v)$ for some non-zero $v\in \fa^+$. 
    Therefore  the claim follows from Theorem \ref{lww}.
\end{proof}

 Set
 $$\fa_{\alpha_1}=\ker \alpha_2\quad\text{ and } \quad \fa_{\alpha_1}^+=\fa^+\cap \ker \alpha_2.$$  Let $p_{\alpha_1}:\fa\to \fa_{\alpha_1}$ denote the unique projection invariant under the Weyl element fixing $\fa_{\alpha_1}$ pointwise, which is simply the reflection about $\fa_{\alpha_1}$. The space of linear forms $\fa_{\alpha_1}^*$ can be identified with the set of all linear forms in $\fa^*$ which are invariant under $p_{\alpha_1}$.
  The following follows by combining
  \cite[Proposition 8.1]{BCLS} and \cite[Corollary 5.5.3]{Sa}, both of whose proofs are based on thermodynamic formalism.
\begin{theorem}   \label{bcls} 
   For any $\psi\in \fa_{\alpha_1}^*$ which is positive on $\fa_{\alpha_1}^+-\{0\}$,
  the critical exponent $\delta_{\psi, \sigma(\Ga)}$ varies analytically on any
     sufficiently small analytic neighborhood of an $\{\alpha_1\}$-Anosov representation of $\Hom(\Ga, G)$. 
  \end{theorem}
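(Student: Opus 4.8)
The statement is essentially a black-boxing of two existing results from thermodynamic formalism, so my plan is to reduce the analyticity of $\delta_{\psi,\sigma(\Ga)}$ to the analyticity of a pressure function along the Anosov deformation. First I would recall the setup: an $\{\alpha_1\}$-Anosov representation $\sigma_0$ of $\Ga$ into $G$ admits a stable neighborhood of Anosov representations, and over this neighborhood one has real-analytically varying families of limit maps and, more to the point, a real-analytic family of Hölder reparametrizations of the geodesic flow on (the unit tangent bundle of) the Gromov boundary of $\Ga$, whose periods record $\alpha_1(\mu(\sigma(\ga)))$ up to bounded error. This is exactly the content of \cite[Proposition 8.1]{BCLS}: for a linear functional $\phi$ in the interior of the dual limit cone, the function $\sigma\mapsto h_\phi(\sigma)$ (the topological entropy of the $\phi$-reparametrized flow, equivalently the critical exponent $\delta_{\phi,\sigma(\Ga)}$) is real-analytic on the Anosov locus. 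The refinement I need is that this applies to $\psi\in\fa_{\alpha_1}^*$ that is merely positive on $\fa_{\alpha_1}^+-\{0\}$ rather than on the full limit cone; this is where \cite[Corollary 5.5.3]{Sa} enters, as Sambarino's thermodynamic machinery handles functionals positive on the relevant (possibly lower-dimensional, here essentially one-dimensional) limit cone $\cal L_{\sigma_0(\Ga)}\subset\fa_{\alpha_1}$.

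The key steps, in order, would be: (i) observe that since $\Ga$ is $\{\alpha_1\}$-Anosov, its limit cone lies in $\fa_{\alpha_1}$ — indeed $\alpha_2$ vanishes on $\mu(H)=\fa_{\alpha_1}^+$ and the Anosov property together with continuity of limit cones keeps $\cal L_{\sigma(\Ga)}$ inside a small cone around $\fa_{\alpha_1}^+$, so that any $\psi\in\fa_{\alpha_1}^*$ positive on $\fa_{\alpha_1}^+-\{0\}$ is positive on $\cal L_{\sigma(\Ga)}-\{0\}$ for $\sigma$ near $\sigma_0$, making $\delta_{\psi,\sigma(\Ga)}$ well-defined and finite; (ii) identify $\delta_{\psi,\sigma(\Ga)}$ with the abscissa of convergence of the Poincaré series, and via the Anosov reparametrization identify this with the entropy $h_{\psi\circ\mu}$ of the $\psi$-weighted flow, using that $\psi$ factors through $p_{\alpha_1}$ so that $\psi(\mu(\sigma(\ga)))$ is comparable to a Hölder cocycle over the geodesic flow; (iii) invoke the real-analyticity of this entropy (equivalently, implicit-function-theorem analyticity of the root of the pressure equation $P(-s\,\psi\circ\cdot)=0$, the pressure being analytic in both $s$ and the representation parameter) from \cite[Proposition 8.1]{BCLS} and \cite[Corollary 5.5.3]{Sa}.

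The main obstacle is step (ii): one must verify that the functional $\psi\in\fa_{\alpha_1}^*$, pulled back along the Cartan projection and the Anosov boundary data, genuinely defines a Hölder (not merely continuous) reparametrization cocycle whose pressure depends analytically on the representation, and that the positivity hypothesis "on $\fa_{\alpha_1}^+-\{0\}$" is enough to ensure the reparametrized cocycle has positive periods (so that the entropy is finite and the pressure equation has a unique simple root, to which the implicit function theorem applies). This is precisely the point at which one cannot use \cite[Proposition 8.1]{BCLS} alone — it is stated for functionals positive on the full limit cone — and must appeal to \cite[Corollary 5.5.3]{Sa}, whose thermodynamic formalism is set up to allow functionals positive only on the limit cone of the specific subgroup. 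Once positivity of the periods and Hölder regularity of the cocycle are in hand, analyticity of $s\mapsto P(-s\,\psi\circ(\cdot))$ jointly in $s$ and the parameter, together with non-vanishing of $\partial_s P$ at the root, gives analyticity of $\delta_{\psi,\sigma(\Ga)}$ by the analytic implicit function theorem, completing the proof.
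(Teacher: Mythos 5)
Your proposal is correct and takes essentially the same approach as the paper, which offers no argument beyond the one-line remark that the theorem ``follows by combining \cite[Proposition 8.1]{BCLS} and \cite[Corollary 5.5.3]{Sa}''; your sketch is a faithful expansion of exactly that thermodynamic-formalism reduction (Hölder reparametrization of the flow, analyticity of the pressure jointly in $s$ and the representation, analytic implicit function theorem). One minor simplification: since $\psi\in\fa_{\alpha_1}^*$ is identified with a $p_{\alpha_1}$-invariant form on $\fa$ and $p_{\alpha_1}(\fa^+)\subset\fa_{\alpha_1}^+$, positivity of $\psi$ on $\mu(\sigma(\Ga))$ is automatic, so your step (i) does not even need the continuity of limit cones.
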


Since $\Ga$ is a convex cocompact subgroup of $H$, the following is a special case of Kassel's theorem
\cite[Proposition 5.1]{Ka} (see also \cite[Theorem 1.1]{DO} for a recent generalization):
\begin{prop} \label{ka} For any $\eta>0$, we have an open neighborhood $\cal O$ of $\op{id}_\Ga$ in $\op{Hom}(\Ga, G)$
such that for any $\sigma\in \cal O$, the limit cone of $\sigma(\Ga)$ is
contained in ${\mathcal C}_\eta:=\{ v\in \fa^+: \|v-\fa_{\alpha_1}\| < \eta\|v\|\}$. 
\end{prop}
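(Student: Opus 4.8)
\textbf{Proof plan for Proposition \ref{ka}.}
The plan is to reduce the statement about limit cones of small deformations to the continuity of the limit cone for $\{\alpha_1\}$-Anosov representations, which is Kassel's theorem in the convex cocompact setting. First I would recall that the natural inclusion $\op{id}_\Ga$ is $\{\alpha_1\}$-Anosov in $G=\SO^\circ(n,2)$ by the lemma above, and that the limit cone of $\Ga$ equals $\mu(H)=\fa_{\alpha_1}^+$ by the computation in Section \ref{sec4}. So at $\sigma=\op{id}_\Ga$ the limit cone is exactly $\fa_{\alpha_1}^+\subset {\mathcal C}_\eta$ for every $\eta>0$.

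The key step is to invoke upper semicontinuity of the limit cone near an Anosov point. For Anosov representations the limit cone is the asymptotic cone of the Cartan projection, and it varies continuously (indeed the relevant Patterson--Sullivan / equilibrium data vary continuously, as in \cite{BCLS}), so Kassel's Proposition 5.1 in \cite{Ka} gives: for every $\eta>0$ there is an open neighborhood $\cal O$ of $\op{id}_\Ga$ in $\op{Hom}(\Ga, G)$ such that $\mathcal L_{\sigma(\Ga)}$ is contained in the $\eta$-conical neighborhood of $\mathcal L_{\op{id}_\Ga}=\fa_{\alpha_1}^+$. Concretely: if this failed, one could find a sequence $\sigma_i\to\op{id}_\Ga$ and elements $\ga_i\in\Ga$ with $\mu(\sigma_i(\ga_i))$ pointing (asymptotically) in a direction at distance $\ge\eta\|\mu(\sigma_i(\ga_i))\|$ from $\fa_{\alpha_1}$; the Anosov property, being stable under small deformation (\cite{GW}, \cite{KLP}) with uniform constants on a neighborhood, forces $\alpha_1(\mu(\sigma_i(\ga_i)))\gtrsim |\ga_i|$ and also controls $\alpha_2(\mu(\sigma_i(\ga_i)))$ relative to $|\ga_i|$ via the continuity of the Cartan projection in the Gromov boundary picture, contradicting the escape from ${\mathcal C}_\eta$. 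Since this is exactly the content of \cite[Proposition 5.1]{Ka} (see also \cite[Theorem 1.1]{DO}), I would simply cite it.

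The main obstacle, if one wanted a self-contained argument, is the comparison between the abstract continuity of the Anosov limit map and the genuinely geometric statement that the limit cone shrinks back toward $\fa_{\alpha_1}^+$: one must rule out that mass of the Cartan projection escapes transversally to $\fa_{\alpha_1}$ at bounded word length, and this requires either a compactness argument on a neighborhood of $\op{id}_\Ga$ using the uniform Anosov constants, or the thermodynamic-formalism input of \cite{BCLS}. Since the excerpt already grants us \cite[Proposition 5.1]{Ka} and the openness of $\{\alpha_1\}$-Anosov representations, the proof is a one-line citation; I would phrase it as: ``This is Kassel's \cite[Proposition 5.1]{Ka}, applied to the convex cocompact subgroup $\Ga<H$ whose inclusion into $G$ is $\{\alpha_1\}$-Anosov with limit cone $\fa_{\alpha_1}^+$; see also \cite[Theorem 1.1]{DO}.''
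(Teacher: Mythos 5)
Your proposal takes essentially the same route as the paper: the paper offers no independent argument and simply records the proposition as a special case of Kassel's theorem \cite[Proposition 5.1]{Ka} (with \cite[Theorem 1.1]{DO} noted as a generalization), using exactly the fact that $\Ga$ is convex cocompact in $H$, which is also the substance of your final one-line citation. Your auxiliary ``self-contained'' sketch is not needed and, as written, is not fully justified --- the Anosov lower bound on $\alpha_1(\mu(\sigma_i(\ga_i)))$ does not by itself control the ratio $\alpha_2(\mu(\sigma_i(\ga_i)))/\|\mu(\sigma_i(\ga_i))\|$, which is precisely what Kassel's result provides --- but since you ultimately rely on the same citation as the paper, the proof stands.
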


\begin{Rmk} For the bending deformations $\sigma_t$ discussed in section \ref{def},
 we always have a non-trivial element of $\ga$ (of infinite order) such that $\sigma_t(\ga)=\ga$, and hence $\mu(\sigma_t(\ga))\in\mu(H) -\{0\}$. Therefore we have the following property:
for all sufficiently small $t\ne 0$,  the limit cone of $\sigma_t(\Ga)$
contains the ray $\mu(H)$.  Since $\sigma_t(\Ga)$ is Zariski-dense, its limit cone is convex and has non-empty interior \cite{Ben97}. Therefore  
Proposition \ref{ka}  implies that 
that the limit cone of $\sigma_t(\Ga)$ is the convex cone given
\be\label{expl} \cal L_{\sigma_t(\Ga)} =\{ v=(v_1, v_2, 0, \cdots,-v_2, -v_1)\in \fa^+: 0\le v_2 \le c_{\sigma_t}  v_1\}\ee 
where $c_{\sigma_t} >0$ tends to $0$ as $t\to 0$.\end{Rmk}

Recall from Proposition \ref{pg}. that $$\delta_{\rho,\Ga}= \frac{2(n-1)}{n}.$$
The following proposition gives an alternative proof of Theorem \ref{fin}(3):
\begin{prop}\label{alter} For any sufficiently small $\epsilon>0$, there exists an open neighborhood $\cal O=\cal O(\e) $ of $\op{id}_\Ga$ in $\op{Hom}(\Ga, G)$ such that
for any $\sigma\in \cal O$,
$$\left|  \delta_{\rho, \sigma(\Ga)}  - \frac{2(n-1)}{n}\right| < \e  .$$

In particular, for $n\ge 3$, we have $ \psi_\Ga\not\le \rho ;$ and hence
 $\sigma(\Ga)$ is non-tempered in $G$ for all $\sigma\in \cal O(\frac{n-2}{n})$
\end{prop}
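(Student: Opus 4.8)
The plan is to obtain two-sided control on $\delta_{\rho,\sigma(\Ga)}$ by replacing the linear form $\rho$ — which is \emph{not} in $\fa_{\alpha_1}^*$ — by a form that \emph{is} $p_{\alpha_1}$-invariant, so that Theorem \ref{bcls} applies. Concretely, let $\rho_{\alpha_1}\in\fa_{\alpha_1}^*$ be the restriction of $\rho$ to $\fa_{\alpha_1}$, extended by $p_{\alpha_1}$-invariance; from \eqref{rrr} one computes $\rho_{\alpha_1}(v)=\tfrac n2 v_1$ on $\fa_{\alpha_1}$, and more generally $\rho_{\alpha_1}(v)=\tfrac n2 v_1$ for all $v=(v_1,v_2,0,\dots,-v_2,-v_1)$, whereas $\rho(v)=\tfrac12(nv_1+(n-2)v_2)$. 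Hence on the narrow cone $\mathcal C_\eta$ of Proposition \ref{ka} the two forms agree up to a multiplicative error tending to $1$ as $\eta\to 0$: there is $\delta(\eta)\to 0$ with $(1-\delta(\eta))\rho_{\alpha_1}\le \rho\le(1+\delta(\eta))\rho_{\alpha_1}$ on $\mathcal C_\eta$.

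First I would fix $\e>0$ and choose $\eta=\eta(\e)$ small enough that the above sandwiching, together with the comparison between abscissae of convergence of the two Dirichlet series, forces $|\delta_{\rho,\Ga_0}-\delta_{\rho_{\alpha_1},\Ga_0}|<\e/2$ for \emph{every} discrete $\Ga_0<G$ whose limit cone lies in $\mathcal C_\eta$. Next, by Proposition \ref{ka} there is an open neighborhood $\cal O_0$ of $\op{id}_\Ga$ in $\op{Hom}(\Ga,G)$ such that $\mathcal L_{\sigma(\Ga)}\subset\mathcal C_\eta$ for all $\sigma\in\cal O_0$; shrinking $\cal O_0$ if necessary, we may assume every $\sigma\in\cal O_0$ is $\{\alpha_1\}$-Anosov (the Anosov condition is open by \cite{GW}, \cite{KLP}, and $\op{id}_\Ga$ is $\{\alpha_1\}$-Anosov by the Lemma). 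Now apply Theorem \ref{bcls} to the form $\psi=\rho_{\alpha_1}\in\fa_{\alpha_1}^*$, which is positive on $\fa_{\alpha_1}^+\setminus\{0\}$: the map $\sigma\mapsto\delta_{\rho_{\alpha_1},\sigma(\Ga)}$ is analytic, in particular continuous, on a neighborhood of $\op{id}_\Ga$, so there is $\cal O\subset\cal O_0$ with $|\delta_{\rho_{\alpha_1},\sigma(\Ga)}-\delta_{\rho_{\alpha_1},\Ga}|<\e/2$ for $\sigma\in\cal O$. Combining this with the uniform comparison from the first step applied both at $\Ga$ (whose limit cone is $\mu(H)\subset\fa_{\alpha_1}$, so the two abscissae literally coincide and equal $\tfrac{2(n-1)}{n}$ by Proposition \ref{pg}) and at $\sigma(\Ga)$ gives the claimed inequality $|\delta_{\rho,\sigma(\Ga)}-\tfrac{2(n-1)}{n}|<\e$.

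For the final sentence: taking $\e=\tfrac{n-2}{n}$, which is $>0$ exactly when $n\ge 3$, the estimate yields $\delta_{\rho,\sigma(\Ga)}>\tfrac{2(n-1)}{n}-\tfrac{n-2}{n}=1$ for every $\sigma\in\cal O(\tfrac{n-2}{n})$. By the reformulation of Theorem \ref{lww} (the Proposition preceding Theorem \ref{bcls}), $\delta_{\rho,\sigma(\Ga)}>1$ is equivalent to $\sigma(\Ga)$ not being $G$-tempered; and for $\sigma=\op{id}_\Ga$ itself it shows $\psi_\Ga\not\le\rho$, since $\delta_{\rho,\Ga}=\tfrac{2(n-1)}{n}>1$ forces (again via \cite[Theorem 2.5]{KMO}) a vector $v$ with $\psi_\Ga(v)=\delta_{\rho,\Ga}\rho(v)>\rho(v)$.

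The main obstacle is the first step: Theorem \ref{bcls} is stated only for forms in $\fa_{\alpha_1}^*$, whereas $\rho\notin\fa_{\alpha_1}^*$, so one genuinely needs the quantitative, \emph{uniform-over-all-discrete-subgroups-with-limit-cone-in-$\mathcal C_\eta$} comparison between $\delta_{\rho,\Ga_0}$ and $\delta_{\rho_{\alpha_1},\Ga_0}$. This is not merely a pointwise comparison of the two forms but must be propagated through the definition of the abscissa of convergence; the clean way is to note that on $\mathcal C_\eta$ the two forms are comparable with ratio in $[1-\delta(\eta),1+\delta(\eta)]$, hence for any $s$ the series $\sum e^{-s\rho(\mu(\ga))}$ and $\sum e^{-s\rho_{\alpha_1}(\mu(\ga))}$ are interleaved after rescaling $s$ by that ratio, giving $\tfrac{1}{1+\delta(\eta)}\,\delta_{\rho_{\alpha_1},\Ga_0}\le\delta_{\rho,\Ga_0}\le\tfrac{1}{1-\delta(\eta)}\,\delta_{\rho_{\alpha_1},\Ga_0}$, and then choosing $\eta$ small makes this window as tight as desired. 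One should also double-check that $\rho_{\alpha_1}>0$ on $\fa_{\alpha_1}^+\setminus\{0\}$ (clear, since it is $\tfrac n2 v_1$ and $v_1\ge v_2=0\ge 0$ with $v_1>0$ there) so that $\delta_{\rho_{\alpha_1},\Ga_0}$ is well-defined for the relevant subgroups.
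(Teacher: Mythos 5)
Your proposal is correct and follows essentially the same route as the paper: replace $\rho$ by its $p_{\alpha_1}$-invariant restriction $\rho'$, confine the limit cone to the narrow cone $\mathcal C_\eta$ via Proposition \ref{ka}, compare the two Poincar\'e series there, invoke Theorem \ref{bcls} for continuity of $\delta_{\rho',\sigma(\Ga)}$, and conclude with $\delta_{\rho,\Ga}=\tfrac{2(n-1)}{n}$ from Proposition \ref{pg} and the reformulation of Theorem \ref{lww}. The only point to make explicit is that passing from your multiplicative sandwich to the additive bound $|\delta_{\rho,\Ga_0}-\delta_{\rho_{\alpha_1},\Ga_0}|<\e/2$ needs an a priori bound on the exponents involved (e.g.\ $\delta_{\rho,\Ga_0}\le 2$ for every discrete subgroup, or simply the bound near $\tfrac{2(n-1)}{n}$ furnished by your continuity step), which is exactly how the paper uses $1\le \delta_{\rho,\Ga}\le 2$ in its final estimate.
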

\begin{proof} 
Let $\rho'$ be the restriction of $\rho$ to $\fa_{\alpha_1}$. We may consider $\rho'$ as a linear form on $\fa$  by precomposing with $p_{\alpha_1}$. Note that $\rho'$ is non-negative  on $\fa_{\alpha_1}^+$.

Let $\e>0$.
We can find $\eta>0$ so that for any $v\in {\mathcal C}_\eta=\{ v\in \fa^+: \|v-\fa_{\alpha_1} \| < \eta\|v\|\}$,
$$-\e\rho(v)\le  (\rho - \rho')(v)  \le \e \rho(v).$$

We can take a small neighborhood $\cal O$ of $\op{id}_\Ga$ so that for any $\sigma\in \cal O$, the limit cone of $\sigma(\Ga)$ is contained in the cone ${\mathcal C}_\eta$ by Proposition \ref{ka}.
In particular, $\mu(\sigma(\ga))\in  \mathcal C_\eta$
for all $\ga\in \G$ except for some finite subset $F_\sigma$.
Then for any $\sigma\in \cal O$, we have that for all $s>0$,
$$\sum_{\ga\in \Ga-F_\sigma } e^{-(1-\epsilon) s\rho (\mu(\sigma(\ga)))} \ge \sum_{\ga\in \Ga-F_\sigma} e^{- s\rho' (\mu(\sigma(\ga)))}.  $$
It follows that  $$\delta_{(1-\epsilon) \rho,\sigma(\Ga) } \ge  \delta_{\rho', \sigma(\Ga)}\;\; \text{ and hence }\;\;
\delta_{\rho,\sigma(\Ga)}  \ge (1-\epsilon)  \delta_{\rho',\sigma(\Ga)}. $$
Similarly, 
we have
$$\sum_{\ga\in \Ga-F_\sigma} e^{-(1+\epsilon) s\rho (\mu(\sigma(\ga)))} \le \sum_{\ga\in \Ga-F_\sigma} e^{- s \rho'(\mu(\sigma(\ga)))}, $$
 $$\delta_{(1+\epsilon) \rho,\sigma(\Ga) } \le  \delta_{\rho', \sigma(\Ga)}\;\; \text{ and hence }\;\;
\delta_{\rho,\sigma(\Ga)}  \le (1+\epsilon)  \delta_{\rho',\sigma(\Ga)}. $$

Therefore
\be\label{f}  (1-\e) \delta_{\rho', \sigma(\Ga)} \le \delta_{\rho,\sigma(\Ga)} \le (1+\e) \delta_{\rho',\sigma(\Ga)}.\ee 

By replacing $\cal O$ by a smaller  neighborhood of $\op{id}_\Ga$ if necessary,  we may assume that 
\be\label{f2} | \delta_{\rho', \sigma(\Ga)} -\delta_{\rho', \Ga}|\le \e \quad\text{ for all $\sigma\in \cal O$ } \ee by Theorem \ref{bcls}. 

Hence using that $1\le \delta_{\rho, \Ga}=2(n-1)/n\le 2$,
we deduce from \eqref{f} and \eqref{f2} that
$$|\delta_{\rho, \sigma(\Ga)} -\delta_{\rho, \Ga} |<5 \e 
\quad\text{ for all $\sigma\in \cal O$ } .$$

Since
$\delta_{\rho, \Ga}=2(n-1)/n$, the claim follows.
\end{proof}

We can also obtain the following estimates for  the growth indicator $\psi_{\sigma(\Ga)}$:
\begin{cor} \label{c5} For any sufficiently small $\e>0$, there exists an open neighborhood $\cal O=\cal O(\e) $ of $\op{id}_\Ga$ in $\op{Hom}(\Ga, G)$ such that
for any $\sigma\in \cal O$,
$$\psi_{\sigma(\Ga)}(v)\le 
\left( \frac{2(n-1)}{n} +\e\right ) \rho (v)   \quad\text{for all $v\in \fa^+$} $$
and
    \be\label{vs} \psi_{\sigma(\Ga)}(v_\sigma)\ge \left( \frac{2(n-1)}{n} -\e\right) \rho (v_\sigma) \quad\text{for some
  unit vector $v_\sigma \in \fa^+$}.\ee 
Moreover, $v_{\sigma}$ converges to a unit vector in $\fa_{\alpha_1}$ as $\sigma\to \id_\Ga$.    
 \end{cor}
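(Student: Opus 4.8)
The plan is to deduce the corollary from three facts already established in the paper: the comparison $\psi_{\Ga_0}\le\delta_{\rho,\Ga_0}\cdot\rho$ on $\fa^+$ together with the attainment of equality at some nonzero vector, due to \cite[Theorem 2.5]{KMO}; the estimate $|\delta_{\rho,\sigma(\Ga)}-\frac{2(n-1)}{n}|<\e$ of Proposition~\ref{alter}; and the collapse of the limit cone of $\sigma(\Ga)$ onto $\fa_{\alpha_1}$ from Proposition~\ref{ka}.

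For the upper bound I would fix $\e>0$, take $\cal O(\e)$ to be the neighborhood provided by Proposition~\ref{alter}, and combine $\psi_{\sigma(\Ga)}\le\delta_{\rho,\sigma(\Ga)}\cdot\rho$ with $\delta_{\rho,\sigma(\Ga)}<\frac{2(n-1)}{n}+\e$, using that $\rho\ge 0$ on $\fa^+$. For the lower bound, \cite[Theorem 2.5]{KMO} gives a nonzero $w\in\fa^+$ with $\psi_{\sigma(\Ga)}(w)=\delta_{\rho,\sigma(\Ga)}\cdot\rho(w)$; since both sides are $1$-homogeneous I may rescale $w$ to a unit vector $v_\sigma$, and then $\delta_{\rho,\sigma(\Ga)}>\frac{2(n-1)}{n}-\e$ (shrinking $\cal O(\e)$ if necessary, again by Proposition~\ref{alter}) yields \eqref{vs}.

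For the ``moreover'' clause I would first check that $v_\sigma$ lies in the limit cone. Since $n\ge 3$ and $\e$ is small, $\frac{2(n-1)}{n}-\e>0$, and $\rho(v_\sigma)>0$ as $v_\sigma\in\fa^+\setminus\{0\}$, so $\psi_{\sigma(\Ga)}(v_\sigma)>-\infty$; because $\psi_{\sigma(\Ga)}\equiv-\infty$ off the closed cone $\mathcal L_{\sigma(\Ga)}$, this forces $v_\sigma\in\mathcal L_{\sigma(\Ga)}$ for any valid choice. Now, given $\eta>0$, Proposition~\ref{ka} supplies a neighborhood of $\id_\Ga$ on which $\mathcal L_{\sigma(\Ga)}\subset\mathcal C_\eta$, so that $\|v_\sigma-\fa_{\alpha_1}\|<\eta$ (recall $\|v_\sigma\|=1$); since $\fa^+\cap\fa_{\alpha_1}$ is a single ray with a unique unit vector $v_0$, a unit vector of $\fa^+$ within distance $\eta$ of the line $\fa_{\alpha_1}$ is within $O(\eta)$ of $v_0$, and letting $\eta\to 0$ shows $v_\sigma\to v_0$ as $\sigma\to\id_\Ga$.

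I do not anticipate a genuine obstacle: the argument is bookkeeping on top of Proposition~\ref{alter}, \cite[Theorem 2.5]{KMO}, and Proposition~\ref{ka}. The only point that needs attention is ensuring that the maximizing direction $v_\sigma$ actually sits in the limit cone so that Proposition~\ref{ka} can be invoked, and this is precisely what the positivity $\frac{2(n-1)}{n}-\e>0$ provides.
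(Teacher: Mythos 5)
Your proposal is correct and follows essentially the same route as the paper: the tent property $\psi_{\sigma(\Ga)}\le \delta_{\rho,\sigma(\Ga)}\rho$ with equality at some $v_\sigma$ on the limit cone from \cite[Theorem 2.5]{KMO}, combined with Proposition \ref{alter} for both inequalities and Proposition \ref{ka} for the convergence of $v_\sigma$ to the unit vector of $\fa_{\alpha_1}^+$. Your extra step deducing $v_\sigma\in\mathcal L_{\sigma(\Ga)}$ from positivity is harmless but unnecessary, since the cited theorem already places the maximizing direction on the limit cone.
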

\begin{proof}
Recall that $\psi_{\sigma(\Ga)}\le \delta_{\rho, \sigma(\Ga)}\rho$ and
$\psi_{\sigma(\Ga)}(v_\sigma)=\delta_{\rho, \sigma(\Ga)}\rho(v_\sigma)$ for some non-zero vector $v_\sigma$ on the limit cone $\cal L_{\sigma(\Ga)}$ \cite[Theorem 2.5]{KMO}.
Hence the inequalities follow from Proposition \ref{alter}. The last claim follows from Proposition \ref{ka}.
\end{proof}

Finally, since $v_\sigma$ is of the form
$ (v_{\sigma,1}, c_\sigma v_{\sigma_,1}, 0, \cdots, -c_\sigma v_{\sigma,1}, -v_{\sigma,1})$ for some 
$v_{\sigma,1}>0$ with $c_\sigma\to 0$, the inequality
\eqref{vs} and Proposition \ref{lo} imply  the inequality \eqref{vs0} in Theorem \ref{mm}. Hence,
together with Theorem \ref{fin}, Proposition \ref{alter} and Corollary \ref{c5}, this completes the proof of Theorem \ref{mm}.


\begin{thebibliography}{10}	
	
\bibitem{AB}
N. A'campo and M. Burger.
\newblock Réseaux arithmétiques et commensurateur d'après G. A. Margulis.
\newblock {\em  Invent. Math}, Vol 116 (1994), 1-25.
		
\bibitem{Ben97}
Y. Benoist.
\newblock Propri\'{e}t\'{e}s asymptotiques des groupes lin\'{e}aires.
\newblock {\it Geom. Funct. Anal.}, 7(1):1--47, 1997.
			
\bibitem{BK}
Y.~Benoist and T. Kobayashi.
\newblock  Tempered homogeneous spaces II.
\newblock {\em  In Dynamics, Gometry, Number theory}, Chicago Univ. Press (2022), 213-245.



\bibitem{BS}
I. Biringer and J. Suoto. 
\newblock  Algebraic and geometric convergence of discrete representations to
$\op{PSL}_c(\c)$.
\newblock {\em Geometry and Topology.}, Vol 14 (2010), 2431-2477.



\bibitem{BH}
A. Borel and Harich-Chandra.
\newblock Arithmetic subgroups of Algebraic groups.
\newblock {\em Annals of Math.}, Vol 75 (1962), 485-535.


\bibitem{BCLS}
M.~Bridgeman, R.~Canary, F.~Labourie, and A.~Sambarino.
\newblock The pressure metric for {A}nosov representations.
\newblock {\em Geom. Funct. Anal.}, 25(4):1089--1179, 2015.

\bibitem{BLS} 
M. Burger, J.-S. Li, P. Sarnak. 
\newblock{ Ramanujan duals and automorphic spectrum.}
\newblock {\em Bull. Amer. Math. Soc. } (N.S.) 26 (1992), no. 2, 253–257.

\bibitem{corlette} K. Corlette.
\newblock Hausdorff dimensions of limit sets I.
\newblock {\em Inventiones mathematicae}, vol 102 (1):521--541, 1990.

\bibitem{Co} M. Cowling.
\newblock Sur les coefficients des représentations unitaires des groupes de Lie simple.
\newblock {\em Lecture Notes in Mathematics}  739, Springer-Verlag, New York, 132–178. 
		
\bibitem{CHH}
M. Cowling, U. Haggerup and R. E. Howe.
\newblock  Almost $L^2$ matrix coefficients.
\newblock {\em J. Reiner Angew. Math.}  387 (1988), 97–110.

\bibitem{delzant} T. Delzant, 
\newblock Sous-groupes distingu{\'e}s et quotients des groupes hyperbolique. 
\newblock {\em Duke Mathematical Journal}, vol 83: 661-682, 1996.


\bibitem{DKO} S. Dey, D. Kim and H. Oh.
\newblock Ahlfors regularity of Patterson-Sullivan measures of Anosov groups and applications.
\newblock {\em  Preprint (arXiv:2401.12398).}


  
\bibitem{DO} S. Dey and H. Oh.
\newblock Deformations of Anosov subgroups: limit cones and growth indicators.
\newblock {\em  Preprint (arXiv:2411.04020).}


\bibitem{DRS}
W. Duke, Z. Rudnick and P. Sarnak.
\newblock  Density of integer points on affine homogeneous
varieties.
\newblock {\it  Duke Math. J. }71 (1993), no. 1, 143–179.


\bibitem{EO}
S. Edwards, and H. Oh.
\newblock Temperedness of $L^2(\Gamma\backslash G)$ and positive eigenfunctions in higher rank.
\newblock {\it Communications of the AMS.}, Vol 3 (2023), 744-778.

\bibitem{EFLO}
S. Edwards, M. Fraczyk, M. Lee and H. Oh.
\newblock{Infinite volume and atoms at the bottom of the spectrum.}
\newblock{\it Comptes Rendus Mathématique,} Vol 362 (2024), 1873-1880.


\bibitem{EM}
 A. Eskin and C. McMullen. 
\newblock  Mixing, counting and equidistribution on Lie groups.
\newblock {\it Duke
Math. J.}  71 (1993), no. 1, 181–209.

\bibitem{Fell}
J. M. G. Fell.
\newblock Weak containment and induced representations of groups. II. 
\newblock {\it Transactions of the AMS.} 110, no. 3 (1964), 424-447.

\bibitem{GW}
O. Guichard and A. Wienhard.
\newblock Anosov representations: domains of discontinuity and applications.
\newblock {\it Invent. Math.}, 190(2):357--438, 2012.

\bibitem{G} O. Guichard.
\newblock Groupes plongés quasi-isométriquement dans un groupe de Lie.
\newblock {\it Math. Ann.}, 330 (2004),  331-351.



\bibitem{JM} 
D. Johnson, and J.J. Millson.
\newblock Deformation spaces associated to compact hyperbolic manifolds. 
\newblock {\it Discrete Groups in Geometry and Analysis: Papers in Honor of Mostow on His Sixtieth Birthday}, (pp. 48-106). Boston, MA: Birkhäuser. (1987)
		
\bibitem{Ka}
F. Kassel.
\newblock Deformation of proper actions on reductive homogeneous spaces.
\newblock {\em Math. Ann} 353 (2012), 599-632.


\bibitem{KLP}
M. Kapovich, B. Leeb, and J. Porti.
\newblock Anosov subgroups: dynamical and geometric characterizations.
\newblock {\it Eur. J. Math.}, 3(4):808--898, 2017.
		
\bibitem{KMO}
D. Kim, Y. Minsky, and H. Oh.
\newblock Tent property of the growth indicator functions and applications.
\newblock {\it Geom. Dedicata} 218 (2024), Paper No: 14.

\bibitem{kostant}
B. Kostant.
\newblock On the existence and irreducibility of certain series of representations.
\newblock {\it Bull. of the AMS} 75 (1969): 627-642.

\bibitem{MY}
G. Morikuni and H. Yamabe. 
\newblock On Some Properties of Locally Compact Groups with no Small Subgroup.
\newblock {\it Nagoya Mathematical Journal}, 2 (1951): 29 - 33.




\bibitem{LO} M. Lee and H. Oh.
\newblock Dichotomy and measures on limit sets of Anosov groups.
\newblock {\it Int. Math. Res. Not. IMRN.} (2024). no. 7, 5658-5688.


\bibitem{LWW}
C. Lutsko,  T. Weich, and L. Wolf.
\newblock Polyhedral bounds on the joint spectrum and temperedness of locally symmetric spaces.
\newblock {\it preprint arXiv:2402.02530.}
		
  
  \bibitem{Magee}
M. Magee.
\newblock Quantitative spectral gap for thin groups of hyperbolic isometries.
\newblock {\it Journal of the European Math. Soc.}
Vol 17, 151-187

\bibitem{matsuzaki2020normalizer}
K. Matsuzaki, Y. Yabuki and J. Jaerisch. 
\newblock Normalizer, divergence type, and Patterson measure for discrete groups of the Gromov hyperbolic space.
\newblock {\em Groups, Geometry, and Dynamics}, vol 14 (2): 369--411, 2020. 


\bibitem{Oh}
H. Oh.
\newblock Uniform pointwise bounds for matrix coefficients of unitary representations and applications to Kazhdan constants.
\newblock {\em  Duke Math. J.}, vol 113 (2002), pp. 133--192.		

    \bibitem{Pa} S. Patterson.
\newblock{\em The limit set of a Fuchsian group.}
\newblock{Acta Math. 136 (1976), 241-273.}


\bibitem{Q1}
J. F. Quint.
\newblock L'indicateur de croissance des groupes de {S}chottky.
\newblock {\it Ergodic Theory Dynam. Systems}, 23(1):249--272, 2003.


\bibitem{Q2}
J. F. Quint.
\newblock Divergence exponentielle des sous-groupes discrets en rang supérieur.
\newblock {\it  Commentarii mathematici helvetici},
77 (2002), 563-608.



\bibitem{Q3} J.-F. Quint. 
\newblock Propriété de Kazhdan et sous-groupes discrets de covolume infini.
\newblock{\it Travaux mathématiques}  14 (2003), 143-151.

\bibitem{Sa}
A.~Sambarino.
\newblock A report on an ergodic dichotomy.
\newblock {\it Ergodic Theory
  Dynam. Systems}, Vol 44, 236-289, 2024

  
\bibitem{Su} D. Sullivan.
\newblock{\em Related aspects of positivity in Riemannian geometry.}
\newblock{J. Diff geometry. 25 (1987), 327--351.}


\bibitem{Su2} D. Sullivan.
\newblock{\em Entropy, Hausdorff measures old and new, and 
limits of geometrically finite Kleinian groups}
\newblock{Acta Math. 153 (1984), 259--277.}

\end{thebibliography}
\end{document}